\documentclass[a4paper,11pt]{article}
\usepackage[english]{babel}
\usepackage{latexsym,amsfonts,amsmath,enumerate,graphics,enumerate,amsthm,tikz,hyperref}
\usepackage[affil-it]{authblk}


\let\epsilon=\varepsilon
\newtheorem{theorem}{Theorem}[section]
\newtheorem{lemma}[theorem]{Lemma}
\newtheorem{corollary}[theorem]{Corollary}

\newtheorem{proposition}[theorem]{Proposition}
\newtheorem{conjecture}[theorem]{Conjecture}

\theoremstyle{definition}
\newtheorem{definition}[theorem]{Definition}
\newtheorem{example}[theorem]{Example}
\newtheorem{remark}[theorem]{Remark}

\newcommand{\kommentaar}[1]{\begin{item}\item[]{\small\textsf{#1}}\end{itemize}}

\topmargin = 8pt
\oddsidemargin = 8mm
\textheight = 223mm
\textwidth = 154mm

\let\epsilon=\varepsilon
\let\phi=\varphi
\let\theta=\vartheta

\def\R{\mathbb{R}}
\def\C{\mathbb{C}}
\def\H{\mathbb{H}}

\def\F{\mathrm{Funk}}
\def\RF{\mathrm{RFunk}}

\newcommand{\comment}[1]{}
\newcommand{\norm}[1]{\left\Vert #1 \right\Vert}
\newcommand{\mat}[4]{\begin{pmatrix} #1 & #2 \\ #3 & #4 \\ \end{pmatrix} }

\newcommand{\intC}{C^\circ}


\makeatletter
\def\@maketitle{%
  \newpage
  \null
  \vskip 2em%
  \begin{center}%
  \let \footnote \thanks
    {\Large\bfseries \@title \par}%
    \vskip 1.5em%
    {\normalsize
      \lineskip .5em%
      \begin{tabular}[t]{c}%
        \@author
      \end{tabular}\par}%
    \vskip 1em%
    {\normalsize \@date}%
  \end{center}%
  \par
  \vskip 1.5em}
\makeatother

\begin{document}

\title{ \LARGE Denjoy-Wolff theorems for Hilbert's and Thompson's metric spaces}

\author{Bas Lemmens%
\thanks{Email: \texttt{B.Lemmens@kent.ac.uk}; Supported by EPSRC grant EP/J008508/1}}
\affil{School of Mathematics, Statistics \& Actuarial Science,
University  of Kent, Canterbury, Kent CT2 7NX, UK.}

\author{Brian Lins%
\thanks{Email: \texttt{blins@hsc.edu}}}
\affil{Box 131, Hampden-Sydney College, Hampden-Sydney, VA 23943, USA.}

\author{Roger Nussbaum%
\thanks{Email: \texttt{nussbaum@math.rutgers.edu} Partially supported by NSFDMS 1201328}}
\affil{Rutgers, The State University Of New Jersey, 110 Frelinghuysen Road,
Piscataway, NJ 08854-8019, USA.}

\author{Marten Wortel%
\thanks{Email: \texttt{marten.wortel@gmail.com}; Supported by EPSRC grant EP/J008508/1}}
\affil{Unit for BMI, North-West University, Private Bag X6001-209, Potchefstroom 2520, South Africa}

\date{}
\maketitle

\begin{abstract} 
We study the dynamics of fixed point free mappings on the interior of a normal, closed cone in a Banach space that are nonexpansive with respect to Hilbert's metric  or Thompson's metric. We establish several Denjoy-Wolff type theorems that confirm conjectures by Karlsson and Nussbaum for an important class of nonexpansive mappings. We also extend and put into a broader perspective results by Gaubert and Vigeral concerning the linear escape rate of such nonexpansive mappings. 
\end{abstract}

\section{Introduction}\label{sec1}
The classical Denjoy-Wolff theorem asserts that all orbits of a fixed point free holomorphic mapping $f\colon\mathbb{D}\to\mathbb{D}$ on the open unit disc $\mathbb{D}\subseteq \mathbb{C}$ converge to a unique point $\eta\in\partial \mathbb{D}$. Since the inception of the theorem by Denjoy \cite{De} and Wolff \cite{Wo1,Wo2} in the nineteen-twenties a variety of  extensions have been obtained; see  for example \cite{Ab,BKR1,BKR2,BKR3,KKR,ReS}. A detailed account of its history and an extensive  list of references can be found in the recent survey articles  \cite[Appendices G and H]{AIR}, \cite{Khb}, and  \cite{ReS1}. The problems  considered  in this paper originated in work by Beardon  \cite{Bea1,Bea2} and Karlsson \cite{Ka}, who extended the Denjoy-Wolff theorem to  fixed point free nonexpansive mappings on metric spaces that possess certain features of nonpositive curvature. Earlier studies of the Denjoy-Wolff theorem in the context of metric spaces can be found in \cite{GR1,GR2,Re}. 

A mapping $f\colon M\to M$ on a metric space $(M,\rho)$ is called 
{\em nonexpansive} if 
\[
\rho(f(x),f(y))\leq \rho(x,y)\mbox{\quad  for all }x,y\in M.
\]
Recall that each holomorphic self-mapping of the open unit disc $\mathbb{D}\subseteq \mathbb{C}$ is nonexpansive under the hyperbolic metric by the Schwarz-Pick lemma.

Particularly interesting examples of metric spaces that possess features of nonpositive curvature are Hilbert's metric spaces. Hilbert's metric spaces were introduced by Hilbert \cite{Hil} and play a role in the solution of his fourth problem; see \cite{Pap}. They are Finsler metric spaces that naturally generalize Klein's model of the real hyperbolic space.
To  define Hilbert's metric, let $\Sigma$ be a  convex set in a real vector space $X$ such that for each $x\neq y$ in $\Sigma$  the straight line, $\ell_{xy}$, through  $x$ and $y$ has the property that $\ell_{xy}\cap \Sigma$ is a (relatively) open, bounded subset of $\ell_{xy}$. On $\Sigma$, {\em Hilbert's metric} is given by   
\begin{equation}\label{eq:hil}
\delta(x,y) := \log\left(\frac{|x'-y|}{|x'-x|}\frac{|y'-x|}{|y'-y|}\right )\mbox{\quad for }x\neq y\in \Sigma,
\end{equation}
where $x',y'\in\partial \Sigma$ are the end-points of the segment $\ell_{xy}\cap\Sigma$ such that $x$ is between $x'$ and $y$, and $y$ is between $y'$ and $x$. 

For finite dimensional Hilbert's metric spaces, Karlsson and Nussbaum independently conjectured the following  generalization of the Denjoy-Wolff theorem; see \cite{Khb,NTMNA}. 
\begin{conjecture}\label{con:1} If $f\colon\Sigma\to\Sigma$ is a fixed point free mapping on a finite dimensional Hilbert's metric space $(\Sigma,\delta)$, then there exists a convex set $\Omega\subseteq \partial \Sigma$ such that for each $x\in\Sigma$ all accumulation points of the orbit $\mathcal{O}(x;f):=\{f^k(x)\colon k\geq 0\}$ lie in $\Omega$.  
\end{conjecture}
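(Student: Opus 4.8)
The plan is to work in the horofunction compactification of the Hilbert geometry and to produce a single $f$-invariant horofunction whose horoballs trap the orbits. First I would realize $(\Sigma,\delta)$ as a cross-section of a closed cone, so that the linear structure of the boundary is available, and record the two basic facts I need: since $\Sigma$ is finite dimensional, $\delta$ is a proper geodesic metric (closed $\delta$-bounded sets are compact), and the horofunctions of a Hilbert geometry admit an explicit description in terms of the convex geometry of $\partial\Sigma$, each being built from a boundary point together with a supporting-hyperplane datum. These facts make the horofunction boundary $\overline{\Sigma}^{\,h}$ metrizable and let me pass to subsequential limits of normalized distance functions.

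Next I would extract the invariant horofunction. Fix a basepoint $b\in\Sigma$ and set $\ell := \lim_k \delta(b,f^k(b))/k$, the escape rate, which exists by Fekete's lemma since the sequence $k\mapsto \delta(b,f^k(b))$ is subadditive (using that $f$ is nonexpansive). Consider the $1$-Lipschitz normalized functions $x\mapsto \delta(x,f^k(b))-\delta(b,f^k(b))$; by Arzel\`a--Ascoli a subsequence converges locally uniformly to a horofunction $h$ with $h(b)=0$. Combining nonexpansiveness with the definition of $\ell$, I would show $h\circ f\le h-\ell$, so that for every $c$ the horoball $\{x: h(x)\le c\}$ is mapped by $f$ into the smaller horoball $\{x: h(x)\le c-\ell\}$. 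Consequently $h(f^k(x))\to -\infty$ along every orbit, which forces all accumulation points of $\mathcal{O}(x;f)$ into the boundary region on which $h$ is \emph{supported}, namely the set where the nested horoballs of $h$ accumulate on $\partial\Sigma$.

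The crux is to identify this boundary region and prove it is convex. Here I would use the explicit form of Hilbert-geometry horofunctions: the trace on $\partial\Sigma$ of the nested family of horoballs of $h$ is (the relative interior of) a face of $\overline{\Sigma}$, or an intersection of such faces determined by the supporting hyperplanes in the horofunction datum. Since faces and their intersections are convex by the ambient linear structure, the candidate set $\Omega$ is convex; and because the same $h$ governs every orbit, a single $\Omega$ works uniformly in $x$, as required.

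The main obstacle is the degenerate behaviour that can occur when $\Sigma$ is neither a polytope nor strictly convex, and especially when the escape rate vanishes, $\ell=0$. In that regime the horoballs no longer shrink at a definite rate, a single invariant horofunction need not pin the accumulation set down to one convex face, and one must instead intersect the horoballs of a whole family of invariant horofunctions while ruling out the ``parabolic'' possibility that orbits spread across several faces. Controlling this degeneracy, and showing that the intersection of the relevant horoballs still traces a convex subset of $\partial\Sigma$, is where the real work lies; it is precisely this step that forces the restriction to a structured class of nonexpansive maps rather than all of them.
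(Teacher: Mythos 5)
First, a point of orientation: the statement you set out to prove is stated in the paper as Conjecture~\ref{con:1} precisely because it is open. The paper does not prove it; it confirms it only for restricted classes -- strictly convex domains \cite{Bea2}, polytopes \cite{Li1}, and (the paper's own contribution, Theorems~\ref{thm:2}, \ref{thm:3}, \ref{thm:7.1} and \ref{thm:7.2}) mappings induced by order-preserving homogeneous maps on cones under additional compactness or divergence hypotheses. Your first two paragraphs reproduce known material and are essentially sound: properness and geodesicity of finite-dimensional Hilbert geometries, compactness of the horofunction boundary, and the existence of a horofunction $h$ with $h(f(x))\leq h(x)-\ell$ for all $x$ (this is Karlsson \cite{Ka} and Gaubert--Vigeral \cite{GV}, which the paper cites at the start of Section 6). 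The genuine gap is your third paragraph, and it is exactly the open problem. It is not known -- and this is the crux -- that the set where the horoballs of $h$ accumulate on $\partial\Sigma$ is a face or an intersection of faces of $\overline{\Sigma}$. What this route actually yields (Karlsson \cite{Ka}, Karlsson--Noskov \cite{KN}, together with Walsh's description of Hilbert horofunctions \cite{Wa}) is that the accumulation points of an orbit lie in a \emph{union} of closed boundary faces, and a union of faces is in general not convex; upgrading ``union'' to ``convex set'' for a general nonexpansive $f$ is precisely what Conjecture~\ref{con:1} asks. Note how the paper obtains convexity in its restricted setting: via Lemma~\ref{lem:5.6} and Corollary~\ref{cor:weakupper} (or the Thompson-metric argument of Section 3) it extracts a single positive linear functional $\phi\in C^*\setminus\{0\}$ with $\omega(x;f)\subseteq\mathrm{ker}(\phi)\cap C$, and a kernel sliced with the cone is convex for free. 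That extraction uses order-preservation and homogeneity of the underlying cone map in an essential way and has no analogue for an arbitrary nonexpansive map.

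Second, the case $\ell=0$, which you defer to your final paragraph as a ``degeneracy,'' is not peripheral: it contains all parabolic-type maps. The paper's Example~\ref{ex:par} -- a parabolic Hilbert isometry of the cone of positive definite $2\times 2$ symmetric matrices -- has zero escape rate, and such examples are ubiquitous. For these maps your mechanism produces only $h\circ f\leq h$: the horoballs are invariant but do not shrink, $h(f^k(x))$ need not tend to $-\infty$, and nothing in your argument confines the accumulation set to a single convex piece of the boundary. Since you acknowledge that both this case and the convexity step remain unresolved, the proposal is, by its own account, not a proof of the statement; it is an outline of the known partial results (which the paper surveys and extends) together with an accurate identification of where the difficulty lies.
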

In fact, Nussbaum conjectured that the same assertion holds in infinite dimensions under additional compactness conditions on $f$; see \cite[Conjecture 4.21]{NTMNA}. 
Note that if $\Sigma$ is finite dimensional and its closure (in the usual topology) is strictly convex, then each convex subset of $\partial \Sigma$ reduces to a single point. Conjecture \ref{con:1} was shown to hold in case $\Sigma$ has a strictly convex closure by Beardon \cite{Bea2}, and for polytopes  by Lins \cite{Li1}. Further supporting evidence was obtained in \cite{AGLN,KN,LiN,NTMNA}. 

Important  examples of Hilbert's metric nonexpansive mappings arising in mathematical analysis come from nonlinear mappings on cones. Let $C$ be a closed cone with nonempty interior, $C^\circ$, in a normed space $X$. Suppose that there exists a strictly positive linear functional $\phi\in X^*$, i.e., $\phi(x)>0$ for all $x\in C\setminus\{0\}$, and let $\Sigma_\phi^\circ:=\{x\in C^\circ\colon \phi(x)=1\}$. If $f\colon C^\circ\to C^\circ$ preserves the partial ordering induced by $C$ and is homogeneous (of degree one), then the mapping $g\colon\Sigma_\phi^\circ\to\Sigma_\phi^\circ$ given by 
\begin{equation}\label{eq:g}
g(x)=\frac{f(x)}{\phi(f(x))}\mbox{\quad for }x\in\Sigma_\phi^\circ,
\end{equation}
is nonexpansive under $\delta$ on $\Sigma_\phi^\circ$; see \cite[Chapter 2]{LNBook}. Examples of such mappings $f\colon C^\circ\to C^\circ$ include reproduction-decimation  operators in the analysis of diffusions on fractals \cite{LiN,Metz}, dynamic programming operators in stochastic games (after a change of variables) \cite{RS}, and mappings arising in nonlinear Perron-Frobenius theory \cite{LNBook}. 

Among other results we shall establish the following Denjoy-Wolff type theorem for mappings $g$ given in (\ref{eq:g}). 
\begin{theorem}\label{thm:2} 
Let $C$ be a closed cone with nonempty interior in a finite dimensional vector space and 
$\phi\in X^*$ be a strictly positive functional. Suppose that $f\colon C^\circ\to C^\circ$ is an order-preserving homogeneous mapping with no  fixed point in $C^\circ$ and partial spectral radius $r_{C^\circ}(f)=1$. If there exists $x_0\in C^\circ$ such that either \begin{enumerate}[(a)]
\item $\mathcal{O}(x_0;f)$ has a compact closure in the norm topology, or, 
\item  $\lim_{k\to\infty}\|f^k(x_0)\|=\infty$, 
\end{enumerate}
then there exists a convex set  $\Omega\subseteq\partial \Sigma_\phi^\circ$ such that for each $x\in \Sigma_\phi^\circ$ the accumulation points of $\mathcal{O}(x;g)$, where $g$ is given by (\ref{eq:g}), lie in $\Omega$
\end{theorem}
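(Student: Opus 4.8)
The plan is to regard $g$ as a fixed-point-free $\delta$-nonexpansive self-map of the proper metric space $(\Sigma_\phi^\circ,\delta)$ and to run a horofunction/Busemann-function analysis on its orbits. First I would record that $g^k(x)=f^k(x)/\phi(f^k(x))$ by homogeneity of $f$, so the $g$-orbit is simply the radial projection of the $f$-orbit onto $\Sigma_\phi^\circ$, and the hypotheses (a),(b) are conditions on the \emph{unnormalised} orbit. Next I would verify that $g$ is genuinely fixed-point free: a fixed point $\bar x=g(\bar x)$ is exactly an interior eigenvector $f(\bar x)=\lambda\bar x$ with $\lambda=\phi(f(\bar x))>0$, and by nonlinear Perron--Frobenius theory an interior eigenvector of an order-preserving homogeneous map has eigenvalue equal to the cone spectral radius $r_{C^\circ}(f)=1$; hence $\bar x$ would be a fixed point of $f$ in $C^\circ$, contradicting the hypothesis. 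Finally, since $\Sigma_\phi$ is compact (finite dimension, normal cone, strictly positive $\phi$), every orbit has accumulation points in $\Sigma_\phi$, and the whole task is to confine them to one convex subset $\Omega\subseteq\partial\Sigma_\phi^\circ$.

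The uniformity in $x$ is the easy part, which I would dispatch first using nonexpansiveness: for any $x$ one has $\delta(g^k x,g^k x_0)\le\delta(x,x_0)$ for all $k$, so the $x$-orbit shadows the $x_0$-orbit at bounded Hilbert distance. The two orbits therefore leave every compact subset of $\Sigma_\phi^\circ$ together and accumulate on the same part of the boundary; thus it suffices to produce a convex $\Omega\subseteq\partial\Sigma_\phi^\circ$ containing the accumulation points of $\mathcal O(x_0;g)$, and the same $\Omega$ then works for every $x$.

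The heart of the matter is to show that no accumulation point of $\mathcal O(x_0;g)$ lies in the interior $\Sigma_\phi^\circ$. The mechanism is uniform: the displacements $\delta(g^k x_0,g^{k+1}x_0)$ are non-increasing in $k$, so an interior accumulation point $q$ (along $g^{k_j}x_0\to q$) satisfies $\delta(q,g(q))=\lim_k\delta(g^k x_0,g^{k+1}x_0)=:c$, and passing to the portion of the omega-limit set lying in $\Sigma_\phi^\circ$, on which a nonexpansive map acts isometrically, yields a $g$-invariant interior set. I would then use the two hypotheses to convert this into a contradiction with fixed-point-freeness via the cone structure. Under (a) the $f$-orbit is relatively compact in $C$, and such an interior invariant set would force the $f$-orbit to be relatively compact inside $C^\circ$, whence a bounded-orbit theorem of nonlinear Perron--Frobenius theory produces an interior eigenvector of $f$ and hence a fixed point of $g$. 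Under (b), $\|f^k(x_0)\|\to\infty$ forces $\phi(f^k(x_0))\to\infty$ (since $\|g^k(x_0)\|$ stays bounded on the compact slice $\Sigma_\phi$), so an interior accumulation point $q$ would mean $f^{k_j}(x_0)\sim\phi(f^{k_j}(x_0))\,q$ escapes to infinity along a fixed interior ray; feeding this into the Thompson-nonexpansiveness and homogeneity of $f$ again manufactures an eigenvector and a contradiction. In either case the accumulation points are confined to $\partial\Sigma_\phi^\circ$.

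With all accumulation points on the boundary, I would invoke Karlsson's theorem for nonexpansive self-maps of proper metric spaces to obtain a horofunction $h$ in the horofunction compactification of $(\Sigma_\phi^\circ,\delta)$ with $h(g^k x_0)\to-\infty$, controlling the orbit's escape. To turn this into a convex target set I would use the explicit description of the horofunctions of Hilbert's metric --- and crucially of its two constituents, the Funk metric $d_F$ and the reverse-Funk metric $d_R$ with $\delta=d_F+d_R$ --- in terms of supporting functionals $\psi\in C^*$ and faces of $\Sigma_\phi$. The associated attracting set is an exposed face $\Sigma_\phi\cap\ker\psi$ (or a finite intersection of such faces), which is convex, and I would take $\Omega$ to be this face. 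The main obstacle is precisely this last identification: Karlsson's theorem controls only the linear escape rate along a single horofunction, whereas the convexity and $x$-independence of $\Omega$ require confining the entire omega-limit set to one face, which forces one to combine the Funk and reverse-Funk horofunction pictures and to treat the delicate zero-escape-rate regime $\lim_k\delta(x_0,g^k x_0)/k=0$ --- exactly the subexponential behaviour permitted by $r_{C^\circ}(f)=1$ --- where a single Busemann function no longer pins down the boundary face and a finer metric-functional argument is needed.
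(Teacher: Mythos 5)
Your proposal has the right overall skeleton (find a face of the cone that absorbs the limit set of the base orbit, then spread to all orbits by nonexpansiveness), but it leaves the decisive step unproved, and you say so yourself. Two points. First, what you call ``the heart of the matter''---excluding interior accumulation points---is actually the routine part and does not need hypotheses (a) or (b) at all: in finite dimensions an interior accumulation point of a $\delta$-nonexpansive orbit already forces a fixed point of $g$ (Ca\l ka's theorem plus the fixed point property, or directly \cite[Corollary 3.2.5]{LNBook}, exactly as used in the proof of Proposition \ref{prop:7.5}), contradicting fixed-point-freeness. Second, the genuine heart is confining the whole $\omega$-limit set to a \emph{single convex} subset of the boundary, and your route via Karlsson's theorem \cite{Ka} collapses precisely where you flag it: when the escape rate $\lim_k \delta(x_0,g^kx_0)/k$ is zero. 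Under $r_{C^\circ}(f)=1$ this is not a degenerate corner case but the typical ``parabolic'' situation (see Example \ref{ex:par}, where $\delta(x_0,g^kx_0)$ grows only logarithmically); there Karlsson's theorem does not supply a single horofunction decreasing along the orbit, and your closing sentence---``a finer metric-functional argument is needed''---is an acknowledgement that the proof is missing, not a proof. A secondary flaw: your uniformity step claims that any convex $\Omega\subseteq\partial\Sigma_\phi^\circ$ containing $\omega(x_0;g)$ automatically contains $\omega(x;g)$ for every $x$; that is false for a general convex set. It is true when $\Omega$ is a face of the form $\ker\psi\cap\Sigma_\phi$ with $\psi\in C^*$ (since accumulation points of different orbits lie in the same parts, and faces are order ideals), or after passing to the convex hull of $\bigcup_x\omega(x;g)$ and rerunning the parts argument as in \cite[Theorem 5.3]{NTMNA}; your write-up does neither explicitly.

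The paper closes exactly the gap you leave open, and it does so by \emph{never} applying Karlsson's theorem to the $g$-orbit: the controlling positive functional is manufactured directly on the cone from the unnormalised $f$-orbit, so the Hilbert-metric escape rate is irrelevant. Under (a), the norm-precompact $f$-orbit is analysed in Thompson's metric: Ca\l ka's theorem shows $d_C(f^k(x_0),x_0)\to\infty$, and the functional-extraction argument of Proposition \ref{prop:3.2} (weak$^*$ compactness of $\Sigma_u^*$, functionals attaining the $M$-values, passage to limits) yields $\phi\in C^*\setminus\{0\}$ with $\omega(x_0;f)\subseteq\ker(\phi)$ (Corollary \ref{cor:3.3}); Lemma \ref{cor:orbitzero} guarantees the accumulation points of the $f$-orbit are nonzero, so the relation descends to the $g$-orbit. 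Under (b), the functional comes from the Wolff-type Theorem \ref{thm:wolff}, which is built from the approximate eigenvectors $v_{\epsilon,u}$ of Theorem \ref{thm:4.1.2}---a structure attached to $f$ itself, not to any orbit---together with the subgradient Lemma \ref{lem:5.6}; Corollary \ref{cor:weakupper} then gives $\psi\in C^*\setminus\{0\}$ with $\psi(f^k(x_0))$ bounded, while (b) forces $\phi(f^k(x_0))\to\infty$, so $\psi(g^k(y_0))\to 0$. In both cases one lands in the face $\ker(\psi)\cap C$, which is convex and contained in $\partial C$, and \cite[Theorem 5.3]{NTMNA} handles all other orbits. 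This also explains the true role of hypotheses (a) and (b): they are the input for constructing $\psi$, not for excluding interior accumulation points, which is where your proposal spends them.
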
 
In fact, we shall prove a more general infinite dimensional version of this result,  see Theorems \ref{thm:7.1} and \ref{thm:7.2}. Unlike in finite dimensions, there need not exist a strictly positive linear functional $\varphi\in X^*$ if $C$ is infinite dimensional; see \cite[pages 48--57]{Kras}. In that case we shall consider scalings of order-preserving homogeneous mappings $f\colon C^\circ\to C^\circ$ by using  continuous homogeneous functions $q\colon C^\circ\to (0,\infty)$.   

We conjecture that  condition (a) or (b)  in Theorem \ref{thm:2} always holds. In other words, we believe that there does not exist an order-preserving homogeneous mapping $f\colon C^\circ\to C^\circ$, with $r_{C^\circ}(f)=1$, on the interior of a finite dimensional closed cone, and a point $x_0\in C^\circ$ such that $\mathcal{O}(x_0;f)$ is unbounded in the norm topology and $\mathcal{O}(x_0;f)$ has an  accumulation point in $\partial C$. At present we can only confirm this in case $C$ is a polyhedral cone; see Theorem \ref{thm:7.4}.

Thompson's metric \cite{Tho} is closely related to Hilbert's metric and is defined on the interior of a closed cone $C$ in a normed space $X$. For Thompson's metric we establish the following Denjoy-Wolff type theorem. 
\begin{theorem}\label{thm:3}
Let $C$ be a  closed cone with nonempty interior in a finite dimensional vector space $X$  and $f\colon C^\circ\to C^\circ$ be a fixed point free mapping which is nonexpansive under Thompson's metric. If $\mathcal{O}(x_0;f)$ has compact closure in the norm topology for some $x_0\in C^\circ$, then there exists a convex set $\Omega\subseteq \partial C$ such that for each $x\in C^\circ$ the accumulation points of $\mathcal{O}(x;f)$  lie in $\Omega$.
\end{theorem}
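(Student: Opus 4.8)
The plan is to work entirely in the metric space $(C^\circ,d_T)$, which in finite dimensions is proper (closed $d_T$-balls are the compact convex order intervals $[u,v]=\{x\colon u\le x\le v\}$) and whose metric topology coincides with the norm topology on $C^\circ$. The compact-norm-closure hypothesis guarantees that $\mathcal{O}(x_0;f)$ has accumulation points and that they lie in $C=\overline{C^\circ}$. I would first reduce everything to this one orbit. For arbitrary $x\in C^\circ$, nonexpansiveness gives $d_T(f^k(x),f^k(x_0))\le d_T(x,x_0)=:c$ for all $k$, so every orbit shadows $\mathcal{O}(x_0;f)$ at uniformly bounded Thompson distance; in particular the order sandwich $e^{-c}f^k(x_0)\le f^k(x)\le e^{c}f^k(x_0)$ holds for all $k$. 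Passing to limits in a normalized slice, this forces each accumulation point of $\mathcal{O}(x;f)$ to lie in the same face of $C$, hence in the same convex set, as those of $\mathcal{O}(x_0;f)$. Thus it suffices to show that the accumulation points of $\mathcal{O}(x_0;f)$ lie in a single face of $C$ and to take $\Omega$ to be their convex hull inside that face.

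The heart of the argument is that $\mathcal{O}(x_0;f)$ has no accumulation point in $C^\circ$. Suppose $\xi\in C^\circ$ were one, so that $d_T(f^{k_j}(x_0),x_0)\to d_T(\xi,x_0)<\infty$ along some subsequence. Since $(C^\circ,d_T)$ is a proper geodesic space, Calka's theorem then forces the whole orbit to be $d_T$-bounded, hence relatively compact in $C^\circ$. At this point I would invoke the key technical input: a fixed point free Thompson-nonexpansive self-map of $C^\circ$ cannot have a $d_T$-bounded orbit. Indeed, a bounded orbit yields a nonempty compact $\omega$-limit set in $C^\circ$ on which $f$ acts, after passing to a suitable subsequence of iterates, as a surjective isometry; and the special geometry of $(C^\circ,d_T)$, in particular that its closed balls are the compact convex order intervals $[u,v]$, precludes fixed point free isometric recurrence in the interior and produces a fixed point, contradicting the hypothesis. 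Establishing this no-bounded-orbit statement, i.e. ruling out interior recurrence, is the step I expect to be hardest and most delicate.

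Once interior accumulation is excluded, $\mathcal{O}(x_0;f)$ is norm-bounded but escapes to $\partial C$, and I would localise its limit set by a Karlsson-type horofunction argument. Using properness of $(C^\circ,d_T)$ and Arzela-Ascoli, extract from the escaping orbit a limiting horofunction $h=\lim_j\big(d_T(\cdot,f^{k_j}(x_0))-d_T(x_0,f^{k_j}(x_0))\big)$. The horofunctions of Thompson's metric on a finite-dimensional cone admit an explicit description in terms of support functionals and faces of $\partial C$, and Karlsson's theorem guarantees that the orbit eventually enters every horoball of $h$. Reading off the Busemann point associated with $h$ then confines all accumulation points of $\mathcal{O}(x_0;f)$ to the single face $F\subseteq\partial C$ it determines, and taking $\Omega$ to be the convex hull of the accumulation set inside $F$ (equivalently, the intersection of $\partial C$ with the supporting hyperplanes singled out by $h$) yields a convex set. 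Combined with the shadowing reduction of the first paragraph, this produces the same $\Omega$ for every orbit and proves the theorem. Beyond the interior-recurrence step, the other point requiring care is the convexity of $\Omega$, namely verifying that the horofunction analysis really confines the limit set to one face rather than to an arbitrary closed subset of $\partial C$.
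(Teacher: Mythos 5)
Your first two steps track the paper's own proof of Theorem \ref{thm:3.1} closely: the order-sandwich reduction of an arbitrary orbit to $\mathcal{O}(x_0;f)$ is the same-part argument the paper performs with \cite[Lemma 5.2]{NTMNA}, and the exclusion of interior accumulation points via Ca\l ka's theorem plus a fixed-point argument is exactly the first half of Corollary \ref{cor:3.3} (the paper invokes the fixed point property and \cite[Theorem 3.11]{NTMNA}; in finite dimensions a Chebyshev-centre argument on the $\omega$-limit set followed by Brouwer settles it, so the step you single out as hardest is in fact the routine one). The genuine gap is your third step, which is the only place the \emph{convexity} of $\Omega$ --- the whole content of the theorem --- can come from. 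Note first that you overstate Karlsson: in the zero-drift situation at hand, the record-time argument (or a Wolff-type theorem as in \cite{GV}) only yields a horofunction $h$ with $h(f^m(x_0))\leq 0$ for all $m$, i.e.\ the orbit \emph{stays in one horoball}; the assertion that the orbit ``enters every horoball'', $h(f^{n_k}(x_0))\to-\infty$, requires positive escape rate, which you do not have here.

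More fatally, confinement to a Thompson-metric horoball does not confine the boundary accumulation points to a single face of $C$, nor to any convex subset of $\partial C$. Take $C=\mathbb{R}^3_+$ with base point $b=(1,1,1)$ and $y_n=(1,1/n,1/n)$; then $d_C(\cdot,y_n)-d_C(b,y_n)$ converges to the Thompson (Funk-type) horofunction $h(x)=\log\max(x_2,x_3)$, and the norm closure of the horoball $\{h\leq 0\}$ meets $\partial C$ in $\{x\in\partial\mathbb{R}^3_+\colon x_2\leq 1,\, x_3\leq 1\}$. This set contains $(5,1,0)$ and $(5,0,1)$ but not their midpoint $(5,\tfrac{1}{2},\tfrac{1}{2})\in C^\circ$: it is not convex, and it meets several faces. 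Reverse-Funk horoballs $\{x\colon y\leq_C tx\}$ behave the same way. So ``reading off the Busemann point'' cannot localize $\omega(x_0;f)$ as you claim; nor can you repair this with another sandwich argument, since distinct accumulation points of a single orbit need not, a priori, lie in a common part. What the proof actually requires --- and what the paper's Proposition \ref{prop:3.2} and Corollary \ref{cor:3.3} are built to deliver --- is a linear functional $\phi\in C^*\setminus\{0\}$ with $\phi(f^{k_j+m}(x_0))\to 0$ uniformly in $m$, so that $\omega(x_0;f)\subseteq\mathrm{ker}(\phi)\cap C$, an exposed face and hence convex. This comes from a bespoke weak$^*$-compactness argument: along a record subsequence $x_{k_i}\to\eta\in\partial C$ one extracts functionals $\phi,\sigma_{jm}\in\Sigma_u^*$ realizing the relevant $M$-values, with $\sigma_{jm}(\eta)=0$, and the chain of inequalities culminating in (\ref{eq:3.4}) converts the vanishing of $\sigma_{jm}$ at $\eta$ into the vanishing of $\phi$ on the entire limit set. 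A horoball bound can only ever produce estimates of the form $\varphi(\xi)\leq \mathrm{const}$ (compare Lemma \ref{lem:5.6}, where Funk horofunctions dominate $\log\varphi$), never the identity $\varphi(\xi)=0$; that difference is precisely what is missing from your sketch.
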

Again we shall establish a more general infinite dimensional version; see Theorem \ref{thm:3.1}, which confirms \cite[Conjecture 4.23]{NTMNA} under the additional condition that there exists a pre-compact  orbit in the norm topology. 

In Section 4 we will introduce  a spectral radius, $r_{C^\circ}(f)$, for order-preserving homogeneous mappings $f\colon C^\circ\to C^\circ$ and use it, not only to prove Theorems \ref{thm:7.1} and \ref{thm:7.2}, but also to extend some results concerning the linear escape rate in \cite{GV}; see Theorem \ref{thm:4.1.4}, Corollary \ref{cor:6.4} and Theorem \ref{thm:cwII}. 
In Section 5 we shall study  Funk and reverse-Funk horofunctions on the interiors of cones, and characterize them for symmetric cones; see Theorem \ref{thm:5.4}. We shall  use the Funk and reverse-Funk horofunctions to establish a  Wolff type theorem for order-preserving homogeneous mappings $f\colon C^\circ\to C^\circ$ (see Theorem \ref{thm:wolff}), which will play a role in the proof of Theorem \ref{thm:7.2}.  

We will start by collecting some basic concepts in the next section.

\section{Preliminaries}\label{sec2} 
A convex subset $C$ of a real vector space $X$ is called a {\em cone} if $C\cap (-C)=\{0\}$ and $\lambda C\subseteq C$ for all $\lambda\geq 0$. A cone $C$ induces a
partial ordering $\leq_C$ on $X$ by 
\[
x\leq_C y\mbox{\quad if }y-x\in C.
\]
Throughout this paper we shall assume that $C$ is a closed cone with nonempty interior, denoted $C^\circ$, in a real Banach space $(X,\|\cdot\|)$. We
shall often assume that $C$ is {\em normal}, i.e., there exists a constant $\kappa\geq 0$ such that 
$\|x\| \leq\kappa \|y\|$ whenever $0\leq_C x\leq_C y$. 

Given a   closed cone $C$  with nonempty interior in a Banach space $(X,\|\cdot\|)$ and  $u\in C^\circ$ the {\em order unit norm},
$\|\cdot\|_u$ on $X$ is defined by 
\[
\|x\|_u:=\inf\{\lambda \geq 0\colon -\lambda u\leq_C x\leq_C \lambda u\}.
\]  
Note that $C$ is a normal cone in $(X,\|\cdot\|_u)$ with normality constant $\kappa=1$. Moreover, the order interval $[-u,u] :=
\{x \in X: -u \leq x \leq u\}$ (which is the unit ball in $\norm{\cdot}_u$) is a neighborhood of $0$ in the original topology by \cite[Lemma 2.5]{AT}, and so
the topology generated by $\norm{\cdot}_u$ is coarser than the original topology. If $C$ is normal in $(X,\|\cdot\|)$, the order unit norm $\|\cdot\|_u$ is
equivalent with $\|\cdot\|$; see for example \cite[Theorems 2.8 and 2.63]{AT}.

A linear functional $\phi\colon X\to\mathbb{R}$ is said to be {\em positive} if $\phi( C)\subseteq [0,\infty)$. It is said to be {\em strictly positive} if
$\phi(x)>0$ for all $x\in C$ with $x\neq 0$. Note that each positive functional on $X$ is continuous with respect to $\|\cdot\|_u$, as $|\phi(x)|\leq \phi(u)$
for all $x\in X$ with $\|x\|_u\leq 1$. We denote the {\em dual cone} by $C^*$; so, 
\[
C^*:=\{\phi\in X^*\colon \phi (C )\subseteq [0,\infty)\} .
\]
Furthermore we define 
\[
\Sigma_u^*:=\{\phi\in C^*\colon \phi(u)=1\}.
\]
The following lemma collects some known facts concerning $\Sigma_u^*$. For the reader's convenience we include the proofs. 
\begin{lemma}\label{lem:2.1} Let $C$ be a closed cone  with nonempty interior in a Banach space $X$. For $u\in C^\circ$ the following
assertions hold:
\begin{enumerate}[(1)] 
\item $x\leq_C y$ if and only if $\phi(x)\leq \phi(y)$ for all $\phi\in \Sigma_u^*$. 
\item For $x\in X$,
\[
\|x\|_u =\sup\{|\phi(x)|\colon \phi\in\Sigma_u^*\}.
\]
\item The set $\Sigma_u^*$ is  norm bounded by $1/d(u, X \setminus C)$, where $d(u,X\setminus C) :=\inf\{\|u-v\|\colon v\in X\setminus C\}$, and  $\Sigma_u^*$ is weak* compact. Moreover, if $X$ is separable, then $\Sigma_u^*$ is a weak* sequentially compact and there exists a strictly positive
functional $\psi\in \Sigma_u^*$. 
\end{enumerate}
\end{lemma}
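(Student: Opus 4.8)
The plan is to prove the three assertions in order, deriving (2) from (1) and obtaining (3) from (1), (2) and the Banach--Alaoglu theorem. For assertion (1) the forward implication is immediate: if $x\leq_C y$ then $y-x\in C$, so $\phi(y-x)\geq 0$ for every $\phi\in C^*\supseteq\Sigma_u^*$. For the converse I would argue by contraposition. If $y-x\notin C$, then since $C$ is closed and convex I can separate $y-x$ from $C$ by Hahn--Banach, obtaining $\psi\in X^*$ and $\alpha\in\mathbb{R}$ with $\psi(y-x)<\alpha\leq\psi(c)$ for all $c\in C$. Because $C$ is a cone, scaling $c$ forces $\psi(c)\geq 0$ on $C$ (so $\psi\in C^*$) and $\alpha\leq 0$, whence $\psi(y-x)<0$. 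The one point needing care is that $\psi$ can be normalized into $\Sigma_u^*$: every nonzero $\psi\in C^*$ satisfies $\psi(u)>0$, since $u\in C^\circ$ gives an $r>0$ with $B(u,r)\subseteq C$, and then $\psi(u\pm v)\geq 0$ for all $\|v\|\leq r$ yields $\psi(u)\geq r\|\psi\|>0$. Replacing $\psi$ by $\psi/\psi(u)\in\Sigma_u^*$ produces a functional with $\phi(x)>\phi(y)$, contradicting the hypothesis.

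For assertion (2) I would simply unwind the definition of the order unit norm using (1). For $\lambda\geq 0$, the relation $-\lambda u\leq_C x\leq_C\lambda u$ is, by (1) and $\phi(u)=1$, equivalent to $-\lambda\leq\phi(x)\leq\lambda$ for all $\phi\in\Sigma_u^*$, i.e. to $\sup_{\phi\in\Sigma_u^*}|\phi(x)|\leq\lambda$. Hence the admissible set of $\lambda$ is exactly $[\sup_\phi|\phi(x)|,\infty)$, and taking the infimum gives $\|x\|_u=\sup_{\phi\in\Sigma_u^*}|\phi(x)|$. In particular $\|x\|_u=0$ forces $\phi(x)=0$ for all $\phi\in\Sigma_u^*$, so by (1) both $x\leq_C 0$ and $0\leq_C x$, giving $x\in C\cap(-C)=\{0\}$; thus $\|\cdot\|_u$ is a genuine norm, a fact I will need below.

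For assertion (3) the norm bound repeats the ball argument from (1): writing $d:=d(u,X\setminus C)>0$, for $\phi\in\Sigma_u^*$ and $\|v\|<d$ we have $u\pm v\in C$, so $0\leq\phi(u\pm v)=1\pm\phi(v)$, i.e. $|\phi(v)|\leq 1$, and homogeneity then gives $\|\phi\|\leq 1/d$. Thus $\Sigma_u^*$ lies in the weak* compact ball of radius $1/d$; since $C^*=\bigcap_{c\in C}\{\phi:\phi(c)\geq 0\}$ and $\{\phi:\phi(u)=1\}$ are weak* closed, $\Sigma_u^*$ is weak* closed, hence weak* compact. When $X$ is separable, bounded subsets of $X^*$ are weak* metrizable, so $\Sigma_u^*$ is a compact metrizable space, which is automatically weak* sequentially compact and weak* separable.

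The main obstacle is the construction of a strictly positive $\psi\in\Sigma_u^*$ in the separable case; the naive idea of summing functionals positive at the points of a dense subset of $C$ does not obviously work, because the radius of positivity of each functional is uncontrolled. The clean fix is to take a countable weak* dense sequence $\{\phi_n\}\subseteq\Sigma_u^*$ (available from the metrizability just established) and set $\psi:=\sum_{n\geq 1}2^{-n}\phi_n$. This series converges in norm since $\|\phi_n\|\leq 1/d$; each partial sum lies in the norm closed cone $C^*$, so $\psi\in C^*$; and $\psi(u)=\sum_{n\geq 1}2^{-n}=1$, so $\psi\in\Sigma_u^*$. For strict positivity, suppose $x\in C$ with $\psi(x)=0$. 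Each term $2^{-n}\phi_n(x)$ is nonnegative, so $\phi_n(x)=0$ for all $n$; since $\phi\mapsto\phi(x)$ is weak* continuous and $\{\phi_n\}$ is weak* dense, the weak* closed set $\{\phi\in\Sigma_u^*:\phi(x)=0\}$ equals all of $\Sigma_u^*$. By (2) this forces $\|x\|_u=0$, whence $x=0$. Therefore $\psi(x)>0$ for all $x\in C\setminus\{0\}$, completing the proof.
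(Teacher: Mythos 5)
Your proof is correct and follows essentially the same route as the paper's: Hahn--Banach separation plus normalization by $\phi(u)$ for (1), unwinding the order-unit norm via (1) for (2), and the ball argument, Banach--Alaoglu, weak* metrizability, and the functional $\psi=\sum_{k\geq 1}2^{-k}\phi_k$ built from a weak* dense sequence for (3). The only cosmetic difference is that you verify strict positivity of $\psi$ contrapositively (all $\phi_k(x)=0$ forces $\|x\|_u=0$, hence $x=0$), whereas the paper argues directly by producing $\sigma\in\Sigma_u^*$ with $\sigma(x)>0$ and approximating it by some $\phi_m$; the ingredients --- part (2), weak* density, and weak* continuity of $\phi\mapsto\phi(x)$ --- are identical.
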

\begin{proof} To prove (1) note that if $x\not\leq_C y$, then $y-x\not\in C$. In that case there exist $\alpha\in\mathbb{R}$ and $\phi\in X^*$ such that
$\phi(y-x)<\alpha$ and $\phi(z)>\alpha$ for all $z\in C$ by the Hahn-Banach separation theorem. We can normalize $\phi$ such that
$\phi(u)=1$. Also note that $0=\phi(0)>\alpha$, so that $\phi(y)<\phi(x)$. As $\phi(\lambda z)>\alpha$ for all $\lambda>0$ and $z\in C$, we must have that
$\phi(z)\geq 0$ for all $z\in C$, and hence $\phi\in\Sigma_u^*$. The opposite implication is trivial. 

To prove (2) note that it follows from (1) that  for each $x\in X$, 
\begin{eqnarray*}
\|x\|_u & = & \inf\{\lambda\geq 0\colon -\lambda u\leq_C x\leq_C\lambda u\}\\
       & = & \inf\{\lambda\geq 0\colon -\lambda\leq \phi(x)\leq \lambda\mbox{ for all }\phi\in \Sigma_u^*\}\\
       & = & \sup\{|\phi(x)|\colon \phi\in\Sigma_u^*\}. 
\end{eqnarray*}

To prove (3) define for $x\in X$ the weak* continuous linear functional $\hat{x}\colon X^*\to\mathbb{R}$ by $\hat{x}(\phi)=\phi(x)$. So, 
\[
\Sigma_u^* =\left (\bigcap_{x\in C} \hat{x}^{-1}([0,\infty))\right )\cap \hat{u}^{-1}(\{1\}),
\]
which is a weak* closed subset of $X^*$. 

 Let $r := d(u, X \setminus C) > 0$. If $\norm{z} \leq r$ and $\phi \in \Sigma_u^*$, then $u \pm z \in C$ and so $\phi(u \pm z) \geq 0$ which yields
\[ -1 = - \phi(u) \leq \phi(z) \leq \phi(u) = 1. \]
Hence $|\phi(z)| \leq 1$, and so $\norm{\phi} \leq 1/r$. Therefore $\Sigma_u^*$ is contained in a multiple of the unit ball of $X^*$, which is weak* compact by
Banach-Alaoglu, and so $\Sigma_u^*$ is weak* compact.  

It is well-known that if $X$ is separable, then  bounded sets of $X^*$ are weak* metrizable. In that case $\Sigma_u^*$ is sequentially compact, and hence $\Sigma_u^*$ is separable. Let $(\phi_k)_k$ be a dense sequence in $\Sigma_u^*$, and define $\phi=\sum_{k\geq 1}
2^{-k}\phi_k\in\Sigma_u^*$. For $x\in C$ with $x\neq 0$, $\|x\|_u>0$, and hence by part (2) there exists $\sigma\in\Sigma_u^*$ with $\epsilon:=\sigma(x)>0$.
Consider the weak* neighborhood of $\sigma$,
\[
N_{\epsilon,\sigma}:=\{\phi\in\Sigma_u^*\colon |(\phi-\sigma)(x)|<\epsilon\}
= \{\phi\in\Sigma_u^*\colon |\phi(x)-\epsilon|<\epsilon\}.
\] 
As $(\phi_k)_k$ is dense in $\Sigma_u^*$, there exists $\phi_m\in N_{\epsilon,\sigma}$, and hence $\phi_m(x)>0$. This implies that 
\[
\psi(x) :=\sum_{k\geq 1} 2^{-k}\phi_k(x) \geq {2^{-m}} \phi_m(x)>0,
\]
which shows that $\psi$ is strictly positive. 
\end{proof}

The partial ordering $\leq_C$ induces an equivalence relation $\sim_C$ on $C$ by $x\sim_C y$ if there exist $0<\alpha\leq \beta $ such that 
\[
\alpha y\leq_C x\leq_C \beta y.
\] 
The equivalence classes are called {\em parts} of $C$. It is easy to verify that $C^\circ$ is a part of $C$.  Given $x \in X$ and $y \in C$,  we let  
\[
M(x/y):=M(x/y;C) =\inf\{\beta \in \R \colon x\leq_C \beta y\}.
\]

On $C$,  {\em Thompson's metric} is defined by 
\[
d_C(x,y) :=\log\left( \max\{M(x/y),M(y/x)\}\right)
\]
for all $x\sim_C y$, and $d_C(x,y)=\infty$ otherwise.  It was shown by Thompson \cite{Tho} that $d_C$ is a metric on each part of $C$, and its topology on $C^\circ$ coincides with the norm topology, if $C$ is a closed, normal cone in a Banach
space.

Furthermore,  on $C$, {\em Hilbert's (projective) metric} is defined by 
\[
\delta_C(x,y) := \log \left( M(x/y)M(y/x)\right)
\]
for $x\sim_C y$ and $d_H(x,y)=\infty$ otherwise. Note that $\delta_C(\mu x,\nu y) =\delta_C(x,y)$ for all $\mu,\nu>0$ and $x\sim_C y$. It is known that $\delta_C$ is a metric between pairs of rays in each part of $C$, if $C$ is closed; see \cite[Chapter 2]{LNBook}. Moreover, if there exists a strictly positive linear functional $\phi\in X^*$, then $\delta_C$ coincides with Hilbert's metric $\delta$ given in (\ref{eq:hil}) on  $\Sigma_\phi^\circ=\{x\in C^\circ\colon \phi(x)=1\}$. In finite dimensional spaces the set $\Sigma_\phi^\circ$ is bounded in the norm topology, but it may be unbounded in infinite dimensional normed spaces.  In this paper we shall work with $\delta_C$ rather than $\delta$, and consider it on subsets $\Sigma\subseteq C^\circ$ with the property that for each $y\in C^\circ$ there exists a unique $\lambda>0$ such that $\lambda y\in\Sigma$.

The following basic lemma  will be useful. 
\begin{lemma}\label{lem:2.2} 
Let $C$ be a closed cone with nonempty interior in a Banach space $X$  and $u\in C^\circ$. For each $x \in X$ and $y\in C^\circ$ we have that 
\[
M(x/y) = \sup_{\phi\in \Sigma_u^*}\frac{\phi(x)}{\phi(y)}
\]
and the supremum is attained. Moreover, $(x,y) \mapsto M(x/y)$ is a continuous map from $X \times \intC$ into $\R$.
\end{lemma}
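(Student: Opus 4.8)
The plan is to derive the variational formula directly from Lemma \ref{lem:2.1}(1), and then read off both the attainment and the continuity from the weak* compactness of $\Sigma_u^*$ (Lemma \ref{lem:2.1}(3)) together with elementary estimates carried out in the order unit norm $\|\cdot\|_u$. A useful preliminary observation is that $\phi(y)$ is bounded below by a positive constant, uniformly over $\phi\in\Sigma_u^*$: since $y\in C^\circ$ there is $\delta>0$ with $y-\delta u\in C$, whence $\phi(y)\ge\delta\phi(u)=\delta$ for every $\phi\in\Sigma_u^*$. Moreover, because $|\phi(z)|\le\|z\|_u$ for all $\phi\in\Sigma_u^*$, this lower bound persists (with a slightly smaller constant) on a whole $\|\cdot\|_u$-neighborhood of $y$.

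For the formula I would apply Lemma \ref{lem:2.1}(1) to the relation $x\leq_C\beta y$: it holds if and only if $\phi(x)\le\beta\phi(y)$ for all $\phi\in\Sigma_u^*$, and since $\phi(y)>0$ this is equivalent to $\beta\ge\phi(x)/\phi(y)$ for every $\phi$, i.e. to $\beta\ge\sup_{\phi\in\Sigma_u^*}\phi(x)/\phi(y)$. Taking the infimum over all admissible $\beta$ then yields $M(x/y)=\sup_{\phi\in\Sigma_u^*}\phi(x)/\phi(y)$. For attainment I would note that $\phi\mapsto\phi(x)/\phi(y)$ is weak* continuous on $\Sigma_u^*$ (numerator and denominator are weak* continuous, and the denominator is bounded away from $0$ by the observation above), so by weak* compactness of $\Sigma_u^*$ the supremum is finite and achieved.

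For the joint continuity I would prove a local Lipschitz bound in $\|\cdot\|_u$ and then invoke the fact, recorded in the preliminaries, that the topology of $\|\cdot\|_u$ is coarser than the norm topology, so that $\|\cdot\|_u$-continuity automatically implies norm continuity (this is what lets me avoid assuming $C$ normal). Fix $(x_0,y_0)$ and choose $\epsilon_0>0$ with $\phi(y_0)\ge 2\epsilon_0$ for all $\phi\in\Sigma_u^*$; then $\phi(y)\ge\epsilon_0$ whenever $\|y-y_0\|_u<\epsilon_0$. Using $|\sup_\phi a_\phi-\sup_\phi b_\phi|\le\sup_\phi|a_\phi-b_\phi|$, the algebraic identity
\[
\frac{\phi(x)}{\phi(y)}-\frac{\phi(x_0)}{\phi(y_0)}=\frac{\phi(x-x_0)}{\phi(y)}+\frac{\phi(x_0)\,\phi(y_0-y)}{\phi(y)\,\phi(y_0)},
\]
and Lemma \ref{lem:2.1}(2), I would obtain for such $y$ the estimate
\[
|M(x/y)-M(x_0/y_0)|\le\frac{\|x-x_0\|_u}{\epsilon_0}+\frac{\|x_0\|_u\,\|y-y_0\|_u}{\epsilon_0^{2}},
\]
whose right-hand side tends to $0$ as $(x,y)\to(x_0,y_0)$, giving continuity at $(x_0,y_0)$.

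The main obstacle I anticipate is the $y$-dependence: because $M(x/y)$ is a supremum of quotients, controlling its variation in $y$ requires the uniform positive lower bound on $\phi(y)$, and the denominators must be kept away from $0$ throughout. Since normality of $C$ is not assumed, the delicate point is to perform all estimates in $\|\cdot\|_u$ (where $\Sigma_u^*$ lives in a ball of the dual unit ball and $|\phi(z)|\le\|z\|_u$) and only transfer to the original norm at the very end through the coarseness of the $\|\cdot\|_u$-topology.
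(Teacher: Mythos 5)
Your proof is correct and follows essentially the same route as the paper: the variational formula and its attainment come from Lemma \ref{lem:2.1}(1) together with weak* compactness of $\Sigma_u^*$, and continuity is proved in the $\|\cdot\|_u$-topology and then transferred to the norm topology using that the latter is finer. The only real difference is presentational: where the paper embeds $(X,\|\cdot\|_u)$ isometrically into $(C(\Sigma_u^*),\|\cdot\|_\infty)$ and exhibits $M$ as a composition of continuous maps, you verify the same continuity by an explicit local Lipschitz estimate, supplying the uniform positive lower bound on $\phi(y)$ over $\Sigma_u^*$ that the paper's inversion step implicitly requires as well.
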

\begin{proof}
By Lemma \ref{lem:2.1} (1),
\begin{align*}
 M(x/y) &= \inf \{ \beta \in \R\colon x \leq_C \beta y \} \\
 &= \inf \{ \beta \in \R\colon  \phi(x) \leq_C \beta \phi(y) \mbox{ for all }\phi \in \Sigma_u^* \} \\
 &= \sup \{ \phi(x)/\phi(y)\colon \phi \in \Sigma_u^* \}.
\end{align*}
The supremum is attained by weak* compactness of $\Sigma_u^*$.

For the second statement recall that  the $\norm{\cdot}_u$-topology is coarser than the $\norm{\cdot}$-topology on $X$, and so we may assume that $X$ is equipped with
$\norm{\cdot}_u$. By Lemma \ref{lem:2.1}, the map $x \mapsto \hat{x}$ is an isometric order isomorphism from $(X, \norm{\cdot}_u)$ into $(C(\Sigma_u^*),
\norm{\cdot}_\infty)$, where $\hat{x}(\phi) := \phi(x)$ and $\Sigma_u^*$ is equipped with the weak* topology. The continuity statement now follows from
the fact that the map is the composition of the continuous maps
\[ (x,y) \mapsto (\hat{x}, \hat{y}) \mapsto \left( \hat{x}, \frac{1}{\hat{y}} \right) \mapsto \frac{\hat{x}}{\hat{y}} \mapsto \sup_{\phi \in
\Sigma_u^*} \frac{\hat{x}}{\hat{y}} (\phi) = \sup_{\phi\in \Sigma_u^*}\frac{\phi(x)}{\phi(y)} = M(x/y). \]
\end{proof}

\section{A Denjoy-Wolff theorem for Thompson's metric}\label{sec3}
In this section we prove  a  Denjoy-Wolff type theorem for fixed point free Thompson's metric nonexpansive mappings $f\colon C^\circ\to C^\circ$, 
where one of the orbits of $f$ has a compact closure in the norm topology. 
As we are allowing infinite dimensional cones, some care must be taken	to ensure that all accumulation  points of the orbits of fixed point free nonexpansive mapping lie in $\partial C$. Indeed, in \cite{Ed} Edelstein gave an example of a fixed point free nonexpansive mapping $f\colon H\to H$ on a separable Hilbert space $H$ such that $\mathcal{O}(0;f)$ is unbounded in norm, but has $0$ as an accumulation point; see also \cite{Sta}. To exclude such situations we shall assume that the nonexpansive mapping satisfies the following property. 
\begin{definition}\label{def:fpp}
Let $C$ be a  normal, closed cone  with nonempty interior in a Banach space $(X,\|\cdot\|)$, and let $f\colon C^\circ\to C^\circ$ be a continuous mapping. We shall say that $f$ has the \emph{fixed point property on $C^\circ$ with respect to $d_C$} if for each bounded, convex, closed  subset $D$ of $(C^\circ,d_C)$ with $f(D)\subseteq D$ we have that $f$ has a fixed point in $D$. 
\end{definition}
Of course, if $X$ is finite dimensional, then every continuous mapping $f\colon C^\circ\to C^\circ$ has the fixed point property with respect to $d_C$ by the Brouwer fixed point theorem.  In infinite dimensional spaces sufficient conditions were obtained by Nussbaum in \cite[Theorem 3.10]{NTMNA} in terms of ``condensing functions and measures of noncompactness''. 

Let us now formulate the main result of this section.
\begin{theorem}\label{thm:3.1}
Let $C$ be a  normal closed cone  with nonempty interior in a Banach space $(X,\|\cdot\|)$, and let $f\colon C^\circ\to C^\circ$ be a fixed point free Thompson's metric nonexpansive mapping satisfying the fixed point property on $C^\circ$ with respect to $d_C$. If  $\mathcal{O}(x_0;f)$ has compact closure in the norm topology for some $x_0\in C^\circ$, then there exists a convex set $\Omega\subseteq \partial C$ such that $\omega(x;f)\subseteq \Omega$ for all $x\in C^\circ$.  
\end{theorem}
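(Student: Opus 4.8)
The plan is to separate the statement into two tasks: first, that every orbit escapes to $\partial C$, i.e.\ $d_C(x_0,f^n(x_0))\to\infty$ and consequently every accumulation point of every orbit lies in $\partial C$; and second, the production of a single convex $\Omega\subseteq\partial C$ capturing all of these accumulation points. The tools I would lean on throughout are that nonexpansiveness makes $n\mapsto d_C(f^n(x),f^n(y))$ nonincreasing; that $(C^\circ,d_C)$ is complete with its topology equal to the norm topology (so a sequence $d_C$-converges iff it norm-converges to an interior point); and that every Thompson ball $\{z:d_C(z,w)\le r\}=[e^{-r}w,e^{r}w]$ is an order interval, hence convex.

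For the escape, I would first exclude $d_C$-bounded orbits. Since $d_C(f^n(x),f^n(y))\le d_C(x,y)$, if one orbit is $d_C$-bounded then all are, and $\phi(y):=\limsup_n d_C(y,f^n(x_0))$ is finite and $1$-Lipschitz. Because each $d_C(\cdot,f^n(x_0))$ is quasiconvex (its sublevel sets are order intervals) and $\limsup_n\max(a_n,b_n)=\max(\limsup_n a_n,\limsup_n b_n)$, the function $\phi$ is quasiconvex; moreover $\phi(f(y))\le\phi(y)$ since $d_C(f(y),f^n(x_0))\le d_C(y,f^{n-1}(x_0))$, and $\phi(y)\ge d_C(y,x_0)-\sup_n d_C(x_0,f^n(x_0))$ makes its sublevel sets $d_C$-bounded. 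Hence each $D_\epsilon:=\{y\in C^\circ:\phi(y)\le\inf_z\phi(z)+\epsilon\}$ is nonempty, $d_C$-closed, $d_C$-bounded, convex, and $f$-invariant, so the fixed point property forces a fixed point in $D_\epsilon\subseteq C^\circ$, contradicting fixed point freeness. Thus every orbit is $d_C$-unbounded.

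Next I would rule out interior accumulation points. If $f^{n_k}(x_0)\to\eta\in C^\circ$ in norm (equivalently in $d_C$), a telescoping estimate $d_C(f^{n_k+m}(x_0),f^{n_{k+1}+m}(x_0))\le d_C(f^{n_k}(x_0),f^{n_{k+1}}(x_0))$ shows $(f^{n_k+m}(x_0))_k$ is $d_C$-Cauchy with limit $f^m(\eta)$, and that $f^{p_k}(\eta)\to\eta$ for $p_k:=n_{k+1}-n_k$, so $\eta$ is recurrent. In finite dimensions $(C^\circ,d_C)$ is proper (closed order intervals are compact), so a theorem of Ca{\l}ka type upgrades this bounded subsequence to a bounded orbit, already excluded above. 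Once this is done, $d_C(x_0,f^n(x_0))\to\infty$, and for $n$ large $d_C(x_0,f^n(x_0))=\log M(x_0/f^n(x_0))$, the reverse term dominating as $f^n(x_0)$ nears $\partial C$. By Lemma \ref{lem:2.2} the supremum $M(x_0/f^n(x_0))=\sup_{\phi\in\Sigma_u^*}\phi(x_0)/\phi(f^n(x_0))$ is attained at some $\phi_n\in\Sigma_u^*$, whence $\phi_n(f^n(x_0))=\phi_n(x_0)/M(x_0/f^n(x_0))\to 0$. Passing to a weak$^*$ cluster point $\phi^*$ of $(\phi_n)$ (weak$^*$ compactness and the uniform bound $\|\phi_n\|\le 1/d(u,X\setminus C)$ from Lemma \ref{lem:2.1}(3)), any accumulation point $\xi$ obtained along the matching subsequence satisfies $\phi^*(\xi)=\lim_k\phi_{n_k}(f^{n_k}(x_0))=0$, so $\xi$ lies in the face $F:=\{z\in C:\phi^*(z)=0\}\subseteq\partial C$, and I would take $\Omega$ to be this face, which is convex. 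Universality across starting points then follows from nonexpansiveness: for any $x\in C^\circ$ and $R:=d_C(x,x_0)$ one has $e^{-R}f^n(x_0)\le_C f^n(x)\le_C e^{R}f^n(x_0)$, so each accumulation point $\xi'$ of $\mathcal{O}(x;f)$ is comparable to a corresponding accumulation point $\xi$ of $\mathcal{O}(x_0;f)$, hence $\xi'\sim_C\xi$ and $\xi'\in\Omega$.

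I expect the main obstacle to be twofold. In the escape argument, ruling out interior accumulation points in infinite dimensions is delicate: there $(C^\circ,d_C)$ is not proper, order intervals are only bounded, and the recurrence of $\eta$ must be converted into a fixed point by combining the fixed point property with the precompactness hypothesis rather than by a properness/compactness argument. In the construction of $\Omega$, the genuinely hard point is making the supporting functional $\phi^*$ uniform: the maximizers $\phi_n$ could a priori cluster at different weak$^*$ limits along different subsequences, giving different faces for different accumulation points, so one must show that all boundary accumulation points of $\mathcal{O}(x_0;f)$ lie in one common face (equivalently, one common convex subset of $\partial C$), which is what keeps $\Omega$ both convex and contained in $\partial C$. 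This is precisely where a single well-defined limiting (reverse-Funk) horofunction, invariant under $f$, is needed to pin down the convex set.
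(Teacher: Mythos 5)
Your overall architecture matches the paper's --- show orbits escape to the boundary, find a single convex set containing $\omega(x_0;f)$, then propagate to all starting points by comparability --- and two of your ingredients are correct and even attractive: the asymptotic-center function $y\mapsto\limsup_n d_C(y,f^n(x_0))$, whose quasiconvexity follows from the fact that Thompson balls are order intervals, is a valid self-contained way to convert a $d_C$-bounded orbit into a fixed point (the paper instead cites \cite[Theorem 3.11]{NTMNA} at this step), and the estimate $e^{-R}f^n(x_0)\leq_C f^n(x)\leq_C e^{R}f^n(x_0)$ correctly shows that every accumulation point of $\mathcal{O}(x;f)$ is comparable to one of $\mathcal{O}(x_0;f)$, so membership in a face of $C$ propagates from $x_0$ to every $x$ (the paper does the same via \cite[Lemma 5.2]{NTMNA}). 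However, both of the points you yourself flag as ``obstacles'' are genuine gaps, neither is closed by what you wrote, and so the proposal does not prove the theorem in its stated infinite-dimensional generality.

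First, your passage from ``bounded subsequence / interior accumulation point'' to ``$d_C$-bounded orbit'' is argued only in finite dimensions, via properness. The fix is not the delicate recurrence argument you anticipate: Ca\l ka's theorem, as stated in the paper, requires only that the metric space be \emph{finitely totally bounded}, not proper, and the hypothesis that $\mathcal{O}(x_0;f)$ has norm-compact closure --- combined with the coincidence of the $d_C$- and norm topologies on $C^\circ$ and the fact that $d_C$-balls about $x_0$ are norm-closed order intervals --- makes $(\mathcal{O}(x_0;f),d_C)$ finitely totally bounded, so Ca\l ka's theorem applies verbatim in infinite dimensions. Second, and more seriously, your construction of $\Omega$ fails as written: the maximizing functionals $\phi_n$ from Lemma \ref{lem:2.2} give, for each accumulation point $\xi$ and each subsequence realizing it, \emph{some} weak* cluster point $\phi^*$ with $\phi^*(\xi)=0$, but $\phi^*$ depends on the subsequence, so you obtain only $\omega(x_0;f)\subseteq\partial C$ as a union of possibly different faces, whose convex hull may meet $C^\circ$. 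You correctly name the missing object (a single limiting functional/horofunction) but do not construct it, and this is precisely the technical heart of the paper's proof: Proposition \ref{prop:3.2} chooses the escape subsequence $(x_{k_i})_i$ so that $d_C(x_m,x_0)<d_C(x_{k_i},x_0)$ for all $m<k_i$, takes maximizing functionals for $M(x_0/x_{k_i})$, $M(x_0/x_{k_i-m})$ and $M(x_{k_j}/x_{k_i-m})$, and chains the resulting inequalities, using nonexpansiveness in the form $d_C(x_{k_j+m},x_{k_i})\leq d_C(x_{k_j},x_{k_i-m})$, into the single estimate
\[
\frac{\phi(x_{k_j+m})}{\phi(x_0)}\leq \frac{\sigma_{jm}(x_{k_j})}{\sigma_{jm}(x_0)},
\]
whose right-hand side tends to $0$ uniformly in $m$. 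This produces one $\phi\in\Sigma_u^*$ with $\phi(\xi)=0$ for \emph{every} $\xi\in\omega(x_0;f)$, after which $\Omega$ can be taken to be $\mathrm{ker}(\phi)\cap C$, or, as in the paper, the convex hull of $\bigcup_{x}\omega(x;f)$, excluded from $C^\circ$ by the parts argument. Without an argument of this kind, no single face is known to contain all of $\omega(x_0;f)$, and the theorem is not established.
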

This result confirms \cite[Conjecture 4.23]{NTMNA} by Nussbaum in case the mapping has a pre-compact orbit. Also note that Theorem \ref{thm:3.1} implies Theorem \ref{thm:3}, as each nonexpansive mapping has the fixed point property on $C^\circ$ with respect to $d_C$ when $C$ is finite dimensional.

The following proposition plays a central role in the proof. 
\begin{proposition}\label{prop:3.2} 
If $(x_k)_k$ is a  sequence in the interior of a  closed  cone $C$ in a Banach space $X$ such that $\{x_k\colon k=0,1,2,\ldots\}$ has
compact closure in the norm topology on $X$,  
\begin{equation}\label{eq:3.1} 
d_C(x_{m+k},x_{n+k})\leq d_C(x_m,x_n)\mbox{ \quad for all }k,m,n\geq 0,
\end{equation}
and 
\begin{equation}\label{eq:3.2} 
\lim_{k\to\infty} d_C(x_k,x_0)=\infty,
\end{equation}
then there exist a subsequence $(x_{k_i})_i$ and  $\eta\in\partial C$ such that 
\[
\lim_{i\to\infty} x_{k_i}= \eta,
\] 
and 
\begin{equation}\label{eq:3.3} 
d_C(x_m,x_0)<d_C(x_{k_i},x_0)\mbox{\quad for all }m<k_i.
\end{equation}
Moreover, there exist $\phi, \sigma_{jm}\in \Sigma_u^*$ for $j,m\geq 1$ with $\sigma_{jm}(\eta)=0$ such that  
\begin{equation}\label{eq:3.4} 
\frac{\phi(x_{k_j+m})}{\phi(x_0)}\leq \frac{\sigma_{jm}(x_{k_j})}{\sigma_{jm}(x_0)}
\end{equation}
for all $j,m\geq 1$. 
\end{proposition}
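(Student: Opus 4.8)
The plan is to build the subsequence from a running-maximum construction and then to read off $\phi$ and the $\sigma_{jm}$ from maximising functionals of the kind in Lemma \ref{lem:2.2}. First I would set $a_k:=d_C(x_k,x_0)$ and call an index $k\ge 1$ a \emph{record} if $a_k>a_m$ for all $0\le m<k$. Since $a_k\to\infty$ by (\ref{eq:3.2}) there are infinitely many records; listing them as $k_1<k_2<\cdots$ gives (\ref{eq:3.3}) at once, because (\ref{eq:3.3}) is an intrinsic property of each individual index $k_i$, and moreover $a_{k_i}\to\infty$ along them. Using that $\{x_k\}$ has compact closure in norm I would pass to a subsequence of the records (still obeying (\ref{eq:3.3})) along which $x_{k_i}\to\eta$ in norm, where $\eta\in C$ since $C$ is closed. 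As the $d_C$-topology agrees with the norm topology on $C^\circ$, if $\eta$ were in $C^\circ$ then $d_C(x_{k_i},\eta)\to 0$ and $(a_{k_i})$ would be bounded, contradicting $a_{k_i}\to\infty$; hence $\eta\in\partial C$.

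For the functionals I would fix $u\in C^\circ$, pass to $\|\cdot\|_u$, and invoke Lemma \ref{lem:2.2}: $M(x/y)=\sup_{\phi\in\Sigma_u^*}\phi(x)/\phi(y)$ with the supremum attained, $\Sigma_u^*$ weak* compact, the norms $\|\sigma\|$ for $\sigma\in\Sigma_u^*$ uniformly bounded (Lemma \ref{lem:2.1}(3)), and $\phi(x_0)>0$ for every $\phi\in\Sigma_u^*$ as $x_0\in C^\circ$. By continuity of $M$, $M(x_{k_i}/x_0)\to M(\eta/x_0)<\infty$, while $e^{a_{k_i}}=\max\{M(x_{k_i}/x_0),M(x_0/x_{k_i})\}\to\infty$, so $M(x_0/x_{k_i})=e^{a_{k_i}}$ for large $i$. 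Letting $\tau_i\in\Sigma_u^*$ attain $M(x_0/x_{k_i})$, we get $\tau_i(x_{k_i})=\tau_i(x_0)/M(x_0/x_{k_i})\to 0$, and since $x_{k_i}\to\eta$ with $\|\tau_i\|$ bounded, every weak* cluster point of $(\tau_i)$ lies in the nonempty weak* compact face $F:=\{\sigma\in\Sigma_u^*:\sigma(\eta)=0\}$. This is the reservoir of functionals vanishing at $\eta$ demanded by (\ref{eq:3.4}).

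Next I would extract the structure behind (\ref{eq:3.4}). Shifting (\ref{eq:3.1}) by $k_j$ gives $d_C(x_{k_j+m},x_{k_j})\le d_C(x_m,x_0)$, hence $x_{k_j+m}\le_C e^{d_C(x_m,x_0)}x_{k_j}$; applying $\tau_i$ yields $\tau_i(x_{k_i+m})\le e^{d_C(x_m,x_0)}\tau_i(x_{k_i})\to 0$ for each fixed $m$. Moreover, if $\eta_m'$ is a norm cluster point of $(x_{k_j+m})_j$, the same comparison passes to the limit as $\eta_m'\le_C e^{d_C(x_m,x_0)}\eta$, so every $\sigma\in F$ vanishes at $\eta_m'$ too. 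This makes a \emph{single} $\phi\in F$ (a weak* cluster point of $(\tau_i)$) plausible on the left of (\ref{eq:3.4}), with $\sigma_{jm}\in F$ chosen to realise $\sup_{\sigma\in F}\sigma(x_{k_j})/\sigma(x_0)$, so that (\ref{eq:3.4}) reduces to the factor-free bound $\phi(x_{k_j+m})/\phi(x_0)\le\sup_{\sigma\in F}\sigma(x_{k_j})/\sigma(x_0)$.

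The hard part will be this last bound, since the naive estimate above only delivers the spurious factor $e^{d_C(x_m,x_0)}\ge 1$. Removing it is exactly where the record property must be used quantitatively: shifting (\ref{eq:3.1}) by $m$ and inserting (\ref{eq:3.3}) gives $d_C(x_{k_j},x_m)\le d_C(x_{k_j-m},x_0)<d_C(x_{k_j},x_0)$ for $1\le m\le k_j$, whence $M(x_m/x_{k_j})<M(x_0/x_{k_j})$, a strict comparison singling out $x_0$ among the past reference points. I expect that feeding this into the maximisers $\tau_i$ and passing to the weak* limit absorbs the factor; arranging this uniformly in $m$, so that one $\phi$ serves all $j,m$ simultaneously (presumably via a diagonal extraction along the records), is the principal technical obstacle.
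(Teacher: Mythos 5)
Your first half is correct and matches the paper: the record (running-maximum) subsequence gives (\ref{eq:3.3}), norm compactness gives $x_{k_i}\to\eta\in C$, and $\eta\in\partial C$ because the $d_C$-topology agrees with the norm topology on $C^\circ$; likewise the observations that $M(x_0/x_{k_i})=e^{d_C(x_0,x_{k_i})}$ for large $i$ and that weak* cluster points of its maximisers $\tau_i$ vanish at $\eta$. The genuine gap is that (\ref{eq:3.4}) — the only part of the proposition that is actually used later, in Corollary \ref{cor:3.3} — is never proved; you say so yourself, deferring it as ``the principal technical obstacle.'' What you do have is the bound with the spurious factor $e^{d_C(x_m,x_0)}$, plus the strict qualitative comparison $M(x_m/x_{k_j})<M(x_0/x_{k_j})$. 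The latter cannot do the job: a strict inequality between two quantities both tending to infinity gives no control on their ratio, and no argument is offered that it compensates a fixed but arbitrarily large factor $e^{d_C(x_m,x_0)}$. Your diagonal estimate $\tau_i(x_{k_i+m})\to 0$ also does not help, since it concerns the varying functionals $\tau_i$ along $i=j$, whereas (\ref{eq:3.4}) bounds the \emph{fixed} weak* limit $\phi$ at $x_{k_j+m}$ for each fixed $j$; the real content of (\ref{eq:3.4}) is that $\phi(x_{k_j+m})\to 0$ as $j\to\infty$ uniformly in $m$, which cannot follow from soft cluster-point considerations alone.

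The idea you are missing is the paper's comparison against the backward-shifted points $x_{k_i-m}$, using \emph{three} families of maximisers from Lemma \ref{lem:2.2}: $\phi_i$ attaining $M(x_0/x_{k_i})$ (your $\tau_i$), $\psi_{im}$ attaining $M(x_0/x_{k_i-m})$, and $\sigma_{ijm}$ attaining $M(x_{k_j}/x_{k_i-m})$; for $i$ large all three equal the exponentials of the corresponding Thompson distances. The record property enters as $d_C(x_0,x_{k_i-m})\leq d_C(x_0,x_{k_i})$, i.e.\ $\psi_{im}(x_0)/\psi_{im}(x_{k_i-m})\leq\phi_i(x_0)/\phi_i(x_{k_i})$; maximality of $\psi_{im}$ gives $\sigma_{ijm}(x_0)/\sigma_{ijm}(x_{k_i-m})\leq\psi_{im}(x_0)/\psi_{im}(x_{k_i-m})$; and nonexpansiveness (\ref{eq:3.1}) enters as $d_C(x_{k_j+m},x_{k_i})\leq d_C(x_{k_j},x_{k_i-m})$, so that the unwanted factor $e^{d_C(x_{k_j},x_{k_i-m})}$ is rewritten exactly as $\sigma_{ijm}(x_{k_j})/\sigma_{ijm}(x_{k_i-m})$. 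Chaining these three inequalities telescopes the factor away and yields $\phi_i(x_{k_j+m})/\phi_i(x_0)\leq\sigma_{ijm}(x_{k_j})/\sigma_{ijm}(x_0)$ with no extra constant; letting $i\to\infty$ produces $\phi$ and $\sigma_{jm}$ (the paper first reduces to a separable subspace via Kantorovich extension so as to use weak* convergent \emph{subsequences} rather than subnets, a point your formulation glosses over). Note finally that $\sigma_{jm}$ is \emph{not} a maximiser over your face $F$: it is the limit of the $\sigma_{ijm}$, and its vanishing at $\eta$ is deduced from $\sigma_{ijm}(x_{k_i-m})\to 0$ together with $\log\bigl(\sigma_{ijm}(x_{k_i})/\sigma_{ijm}(x_{k_i-m})\bigr)\leq d_C(x_{k_i},x_{k_i-m})\leq d_C(x_m,x_0)$, which forces $\sigma_{ijm}(x_{k_i})\to 0$ and hence $\sigma_{jm}(\eta)=0$.
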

\begin{proof}
 Take $u\in C^\circ$ fixed, and let $R := 1/d(u, X \setminus C)$ so that $\Sigma_u^*$ is bounded by $R$ by Lemma~\ref{lem:2.1}(3). 
Let $Y$ be the closed linear span of 
 $\{x_k\colon k=0,1,2,\ldots\}\cup\{u\}$ and write $K=C\cap Y$.  So, $Y$ is separable and $K$ is a closed cone in $Y$ with $u$ in its interior. Note that 
 \[
 M(x/y;C)= M(x/y;K)\mbox{\quad for all }x,y\in K^\circ,
 \]
 and hence $d_K(x,y)=d_C(x,y)$ on $K^\circ$. As $Y\cap C^\circ$ is nonempty, $Y$ a majorizing subspace of $X$, meaning that for each $x\in X$ there exists $y\in
Y$ such that $x\leq_C y$. So by Kantorovich's theorem \cite[Theorem 1.30]{AT}, we know that each positive linear functional on $Y$ can be extended as a positive
functional to all of $X$. Thus, we may as well assume from the outset that $X$ is separable. 
 
 From Lemma \ref{lem:2.1}(3) we know that $\Sigma_u^*$ is weak* sequentially compact. 
 By (\ref{eq:3.2}) we can always find a subsequence $(x_{k_i})_i$ such that 
 (\ref{eq:3.3}) holds. Furthermore, as $\{x_k\colon k=0,1,2,\ldots\}$ has compact closure in the norm topology, we can take a further subsequence and assume
that $(x_{k_i})_i$ converges to $\eta\in\partial C$. (Note that $\eta$ cannot lie in $C^\circ$ as otherwise
$d_C(\eta,x_0)<\infty$, which violates (\ref{eq:3.2}).)
 
From Lemma \ref{lem:2.2} we know that for each $i,j,m\geq 1$ with $m\leq k_i$ there exist  $\phi_i,\psi_{im},\sigma_{ijm}\in\Sigma_u^*$ such that 
 \begin{gather}
 M(x_0/x_{k_i}) = \frac{\phi_i(x_0)}{\phi_i(x_{k_i})},\notag\\
 M(x_0/x_{k_i-m}) = \frac{\psi_{im}(x_0)}{\psi_{im}(x_{k_i-m})}, \label{eq:3.5}\\
 M(x_{k_j}/x_{k_i-m}) = \frac{\sigma_{ijm}(x_{k_j})}{\sigma_{ijm}(x_{k_i-m})}.\notag
 \end{gather}
 We claim that for all $i$ sufficiently large the Thompson's metric distance is equal to the logarithm of the $M$-functions in (\ref{eq:3.5}). We shall prove
this for $M(x_{k_j}/x_{k_i-m})$. The arguments for the other  functions are similar and are left to the reader. 

First note that 
\[
d_C(x_{k_i},x_0)-d_C(x_{k_j +m},x_0)\leq d_C(x_{k_j+m},x_{k_i})\leq d_C(x_{k_j},x_{k_i-m}).
\] 
As the left hand side goes to $\infty$ as $i\to\infty$, we find that $d_C(x_{k_j},x_{k_i-m})\to\infty$ as $i\to \infty$. For $i\geq 1$ let
$\hat{\sigma}_{ijm}\in\Sigma_u^*$ be such that 
\[
M(x_{k_i-m}/x_{k_j}) = \frac{\hat{\sigma}_{ijm}(x_{k_i-m})}{\hat{\sigma}_{ijm}(x_{k_j})}.
\] 
By weak* compactness of $\Sigma_u^*$ the sequence $(\hat{\sigma}_{ijm}(x_{k_j}))_i$ is bounded  from below by a positive real, and hence 
\[
M(x_{k_i-m}/x_{k_j}) = \frac{\hat{\sigma}_{ijm}(x_{k_i-m})}{\hat{\sigma}_{ijm}(x_{k_j})}
\leq  \frac{ R \|x_{k_i-m}\|}{\hat{\sigma}_{ijm}(x_{k_j})}
\]
is bounded from above by a positive real, since $(\|x_{k_i-m}\|)_i$ is bounded. Thus, for all $i$ sufficiently large we have that 
\[
d_C(x_{k_j},x_{k_i-m}) =\log \frac{\sigma_{ijm}(x_{k_j})}{\sigma_{ijm}(x_{k_i-m})}.
\]

From now on we will assume that $i$ is so large that the Thompson's metric distance is given by the logarithm of the $M$-functions in (\ref{eq:3.5}). 

By (\ref{eq:3.3}), 
\[
\log\left( \frac{\psi_{im}(x_0)}{\psi_{im}(x_{k_i-m})}\right) = d_C(x_0,x_{k_i-m})\leq d_C(x_0,x_{k_i}) = \log\left(
 \frac{\phi_i(x_0)}{\phi_i(x_{k_i})}\right),
\]
so that 
\begin{equation}\label{eq:3.6} 
\frac{\phi_i(x_{k_i})}{\psi_{im}(x_{k_i-m})} \leq  \frac{\phi_i(x_0)}{\psi_{im}(x_0)}.
\end{equation}
Note also that by definition of $\psi_{im}\in\Sigma_u^*$ we have that 
\[
\frac{\sigma_{ijm}(x_0)}{\sigma_{ijm}(x_{k_i-m})}\leq \frac{\psi_{im}(x_0)}{\psi_{im}(x_{k_i-m})},
\] 
so that 
\begin{equation}\label{eq:3.7} 
\frac{\psi_{im}(x_{k_i-m})}{\sigma_{ijm}(x_{k_i-m})}\leq \frac{\psi_{im}(x_0)}{\sigma_{ijm}(x_0)}.
\end{equation}

Now using equations (\ref{eq:3.1}), (\ref{eq:3.6}) and (\ref{eq:3.7}) we get that 
\begin{eqnarray*}
\frac{\phi_i(x_{k_j+m})}{\phi_i(x_0)} & = & \frac{\phi_i(x_{k_j+m})}{\phi_i(x_{k_i})}\frac{\phi_i(x_{k_i})}{\phi_i(x_0)}\\
 &\leq & e^{d_C(x_{k_j+m},x_{k_i})}\frac{\phi_i(x_{k_i})}{\phi_i(x_0)}\\
 &\leq & e^{d_C(x_{k_j},x_{k_i-m})}\frac{\phi_i(x_{k_i})}{\phi_i(x_0)}\\
 & = & \frac{\sigma_{ijm}(x_{k_j})}{\sigma_{ijm}(x_{k_i-m})} \frac{\phi_i(x_{k_i})}{\phi_i(x_0)}\\
 & = & \frac{\sigma_{ijm}(x_{k_j})}{\phi_i(x_0)} \frac{\phi_i(x_{k_i})}{\psi_{im}(x_{k_i-m})}   \frac{\psi_{im}(x_{k_i-m})} {\sigma_{ijm}(x_{k_i-m})}\\
& \leq & \frac{\sigma_{ijm}(x_{k_j})}{\phi_i(x_0)} \frac{\phi_i(x_0)}{\psi_{im}(x_0)}   \frac{\psi_{im}(x_0)} {\sigma_{ijm}(x_0)}\\
 & = & \frac{\sigma_{ijm}(x_{k_j})}{\sigma_{ijm}(x_0)}.
\end{eqnarray*}
As $\Sigma_u^*$ is sequentially weak* compact, we can pass to a subsequence twice and assume that $\phi_i\to \phi\in\Sigma_u^*$ and $\sigma_{ijm}\to\sigma_{jm}\in\Sigma_u^*$
in the weak* topology as $i\to\infty$, which proves (\ref{eq:3.4}). 

It remains to show that $\sigma_{jm}(\eta)=0$ for all $j,m\geq 1$. 
As 
\[
d_C(x_{k_j},x_{k_i -m}) = \log \left( \sigma_{ijm}(x_{k_j})/\sigma_{ijm}(x_{k_i-m})\right)\to\infty
\] 
as $i\to\infty$, we know that $\sigma_{ijm}(x_{k_i-m})\to 0$ as $i\to\infty$. Moreover, 
\[
\log  \frac{\sigma_{ijm}(x_{k_i})}{\sigma_{ijm}(x_{k_i-m})}\leq d_C(x_{k_i},x_{k_i-m})\leq 
d_C(x_m,x_0),
\]
which implies that $\sigma_{ijm}(x_{k_i})\to 0$ as $i\to\infty$. As
\[
\lim_{i\to\infty} |\sigma_{ijm}(x_{k_i}-\eta)| \leq \lim_{i\to\infty} R \|x_{k_i}-\eta\| = 0
\]
and $\sigma_{ijm}(x_{k_i}-\eta)\to -\sigma_{jm}(\eta)$ as $i\to \infty$, we see that $\sigma_{jm}(\eta) =0$.
\end{proof}
Before we proceed we mention a useful result due to Ca\l ka \cite{Ca}. Recall that a metric space $(M,\rho)$ is said to be \emph{finitely totally bounded} if for each bounded set $S\subseteq M$ and each $\epsilon>0$, the set $S$ can be covered with finitely many balls of radius $\epsilon$. 
 \begin{theorem}[Ca\l ka]
If $f\colon M\to M$ is a nonexpansive mapping on a finitely totally  bounded metric space 
$(M,\rho)$ and there exists $x_0\in M$ such that $\mathcal{O}(x_0;f)$ has a bounded subsequence, then $\mathcal{O}(x;f)$ is bounded for every $x\in M$. 
\end{theorem}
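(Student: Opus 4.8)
The plan is to reduce to a single orbit and then argue by contradiction using the finite total boundedness. First I would observe that nonexpansiveness propagates boundedness across orbits: for any $x\in M$ and any $n$, $\rho(x_0,f^n(x))\le\rho(x_0,f^n(x_0))+\rho(f^n(x_0),f^n(x))\le\rho(x_0,f^n(x_0))+\rho(x_0,x)$, so if $\mathcal{O}(x_0;f)$ is bounded then every $\mathcal{O}(x;f)$ is bounded. Hence it suffices to show $\mathcal{O}(x_0;f)$ is bounded under the hypothesis that it has a bounded subsequence. Writing $s_n:=\rho(x_0,f^n(x_0))$, this hypothesis says $L:=\liminf_n s_n<\infty$, and I assume for contradiction that the orbit is unbounded, i.e. $\limsup_n s_n=\infty$. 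I also record that the increments are bounded, $\rho(f^n(x_0),f^{n+1}(x_0))\le\rho(x_0,f(x_0))$, and that $(s_n)$ is subadditive, both immediate from nonexpansiveness.

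Next I would decompose the orbit into excursions above a fixed level, using two separated thresholds. Fix $\epsilon\in(0,1)$. Call $n$ a \emph{low time} if $s_n\le L+1$; since $\liminf_n s_n=L$ there are infinitely many low times, and since $\limsup_n s_n=\infty$ there are infinitely many $n$ with $s_n\ge L+2$. Let the \emph{excursions} be the maximal integer intervals on which $s_n\ge L+2$; there are infinitely many of them, say $E_1,E_2,\dots$ ordered in time, with peaks $P_k:=\max_{n\in E_k}s_n$, and $\sup_kP_k=\infty$. Discarding finitely many initial excursions, each $E_k$ is preceded by a low time; let $\sigma_k$ be the last low time before $E_k$ begins, so that $f^{\sigma_k}(x_0)$ lies in the closed ball $\overline{B}(x_0,L+1)$.

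The decisive step uses finite total boundedness. The ball $\overline{B}(x_0,L+1)$ is bounded, hence can be covered by finitely many balls $B_1,\dots,B_N$ of radius $\epsilon/2$. Each $f^{\sigma_k}(x_0)$ lies in some $B_{c(k)}$, so some class $i$ contains infinitely many excursions, and within a class the base-points are pairwise within $\epsilon$. If $E_k$ and $E_{k'}$ lie in the same class then $\rho(f^{\sigma_k}(x_0),f^{\sigma_{k'}}(x_0))\le\epsilon$, and nonexpansiveness propagates this for all later iterates, $\rho(f^{\sigma_k+t}(x_0),f^{\sigma_{k'}+t}(x_0))\le\epsilon$, whence $|s_{\sigma_k+t}-s_{\sigma_{k'}+t}|\le\epsilon$ for every $t\ge0$. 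This ``shadowing'' forces the high parts of the two time-shifted orbit tails to align: a value $\ge L+2$ for one tail forces a value $\ge L+2-\epsilon>L+1$ for the other, so each above-$(L+2)$ excursion of one tail sits inside a single above-$(L+1)$ run of the other, and comparing peaks over the aligned windows gives $P_{k'}\le P_k+\epsilon$. Taking $E_k$ to be the earliest excursion in its class bounds all peaks of that class by a finite constant; with only finitely many classes this yields $\sup_kP_k<\infty$, contradicting $\sup_kP_k=\infty$. Therefore the orbit must be bounded, and by the first paragraph so is every orbit.

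I expect the main obstacle to be exactly this alignment argument, that is, upgrading the pointwise $\epsilon$-closeness of two shifted orbit tails to an honest inequality $P_{k'}\le P_k+\epsilon$ between peaks. A priori two excursions in the same class may have different lengths, and the interaction between the excursion level $L+2$ and the tolerance $\epsilon$ is delicate at the endpoints of an excursion. Separating the two levels $L+1<L+2$ and choosing $\epsilon<1$ is precisely what keeps the high region $\{s\ge L+2\}$ strictly inside its $\epsilon$-neighborhood $\{s\ge L+2-\epsilon\}\subseteq\{s>L+1\}$, so that shadowing cannot allow one tail to be high while the other has already returned to the low set; the remaining care is the bookkeeping required when a single above-$(L+1)$ run happens to contain several above-$(L+2)$ excursions, which must be matched consistently between the two tails.
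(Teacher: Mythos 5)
Your opening moves are all correct: the reduction to the single orbit $\mathcal{O}(x_0;f)$, the recurrence of low times, the pigeonhole argument furnished by finite total boundedness, and the shadowing inequality $|s_{\sigma_k+t}-s_{\sigma_{k'}+t}|\le\epsilon$ for all $t\ge 0$. (Note that the paper itself does not prove this statement; it quotes it from Ca\l ka's paper [Ca], so your attempt can only be measured against correctness, not against a proof in the text.) The proof fails, however, at exactly the step you flag as the main obstacle, and the two-threshold device does not repair it. From shadowing you may legitimately conclude that the window $W:=\{\sigma_k+t\colon t\in E_{k'}-\sigma_{k'}\}$ consists of times at which $s>L+2-\epsilon>L+1$, hence that $W$ lies inside a \emph{single} maximal above-$(L+1)$ run of the orbit. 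What does not follow is that this run is the one containing $E_k$, and that is where the claimed inequality $P_{k'}\le P_k+\epsilon$ breaks down.

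The reason is that on the approach segment, i.e.\ for relative times $0<t<\min E_{k'}-\sigma_{k'}$, the $k'$-tail is only known to satisfy $s_{\sigma_{k'}+t}>L+1$, so shadowing gives merely $s_{\sigma_k+t}>L+1-\epsilon$ on the corresponding segment of the $k$-tail; values in $(L+1-\epsilon,L+1]$ are still \emph{low}. Thus the $k$-tail may return to the low set, and indeed pass through many complete runs, before the window $W$ even begins: whenever the approach time $\min E_{k'}-\sigma_{k'}$ exceeds the distance from $\sigma_k$ to the first low time after it, $W$ sits entirely beyond the run of $E_k$, inside some \emph{later} run. Comparing peaks then only yields $P_{k'}\le(\mbox{peak of that later run})+\epsilon$, which is vacuous under the contradiction hypothesis, since later runs contain later excursions whose peaks are exactly what you are trying to bound. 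Choosing $\epsilon<1$ and separating the levels $L+1<L+2$ prevents $W$ from \emph{straddling} a low time, but it does nothing to prevent $W$ from \emph{overshooting} it entirely; the argument is circular at this point. What your ingredients genuinely give is an almost-periodicity statement, $|s_n-s_{n-\Delta}|\le\epsilon$ for $n\ge\sigma_{k'}$ with $\Delta=\sigma_{k'}-\sigma_k$, and iterating it only produces bounds that grow linearly in $n$ (an error $\epsilon$ accumulates per period), not boundedness. Ruling out the overshoot scenario --- ever longer approaches whose aligned windows always land in later, higher runs --- is essentially the entire content of Ca\l ka's theorem, which is why the proof in [Ca] is substantially more involved than pigeonhole plus shadowing.
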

Using Ca\l ka's theorem and Proposition \ref{prop:3.2} we now derive  the following consequence. 
\begin{corollary}\label{cor:3.3} 
Let $C$ be a  normal closed  cone with nonempty interior in a Banach space $(X,\|\cdot\|)$  and let $f\colon C^\circ\to C^\circ$ be a fixed point free
Thompson's metric nonexpansive mapping satisfying the fixed point property on $C^\circ$ with respect to $d_C$. If $x_0\in C^\circ$ is such that  $\mathcal{O}(x_0;f)$ has compact closure in the norm topology, then there exists $\phi\in C^*\setminus\{0\}$ such that 
$\omega(x_0;f)\subseteq \mathrm{ker}(\phi)\cap C$. 
\end{corollary}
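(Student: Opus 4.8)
The plan is to verify the hypotheses of Proposition~\ref{prop:3.2} for the orbit $x_k:=f^k(x_0)$ and then to extract the desired functional from the estimate (\ref{eq:3.4}). Note first that (\ref{eq:3.1}) is automatic, since iterating nonexpansiveness gives $d_C(x_{m+k},x_{n+k})=d_C(f^k(x_m),f^k(x_n))\le d_C(x_m,x_n)$, and by hypothesis $\{x_k\colon k\ge 0\}$ has compact closure in norm. My first step is to show $\sup_k d_C(x_k,x_0)=\infty$. Suppose not; then $\rho(x):=\limsup_k d_C(x,x_k)$ is finite on $C^\circ$, and nonexpansiveness yields $\rho(f(x))=\limsup_k d_C(f(x),f(x_{k-1}))\le\rho(x)$, so each sublevel set $A_r=\{x\in C^\circ\colon\rho(x)\le r\}$ satisfies $f(A_r)\subseteq A_r$. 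Because every Thompson ball $\{w\colon d_C(w,z)\le t\}$ is the order interval $[e^{-t}z,e^tz]$, it is convex, whence $d_C(\tfrac{x+y}{2},z)\le\max\{d_C(x,z),d_C(y,z)\}$; taking $z=x_k$ and $\limsup_k$ shows the sublevel sets of $\rho$ are convex. The reverse triangle inequality gives $\rho(x)\ge d_C(x,x_0)-\rho(x_0)$, so $A_r$ lies in an order interval $[e^{-s}x_0,e^sx_0]\subseteq C^\circ$; hence its norm-closure $\overline{A_r}$ is a bounded, $d_C$-closed, convex subset of $C^\circ$ with $f(\overline{A_r})\subseteq\overline{A_r}$. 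For $r>\inf_x\rho(x)\le\rho(x_0)<\infty$ it is nonempty, so the fixed point property produces a fixed point of $f$, a contradiction. (This step uses only the fixed point property and normality, not compactness.)

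Next I would upgrade $\sup_k d_C(x_k,x_0)=\infty$ to $\lim_k d_C(x_k,x_0)=\infty$ by applying Ca\l ka's theorem to the orbit itself. Let $M=\{x_k\colon k\ge 0\}$, equipped with $d_C$, so that $f$ restricts to a nonexpansive self-map of $M$. I claim $(M,d_C)$ is finitely totally bounded: any $d_C$-bounded $S\subseteq M$ lies in some order interval $[e^{-B}x_0,e^Bx_0]$, which is norm-closed and contained in $C^\circ$; since $\{x_k\}$ has compact norm-closure, the norm-closure of $S$ is a compact subset of $C^\circ$, on which $d_C$ induces the norm topology, so $S$ is $d_C$-totally bounded. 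Hence, if some subsequence of $(x_k)$ were $d_C$-bounded, Ca\l ka's theorem would force $M=\mathcal{O}(x_0;f|_M)$ to be $d_C$-bounded, contradicting the previous paragraph. Therefore no subsequence is bounded, i.e. $\lim_k d_C(x_k,x_0)=\infty$, which is (\ref{eq:3.2}).

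With (\ref{eq:3.1}) and (\ref{eq:3.2}) in hand, Proposition~\ref{prop:3.2} supplies a subsequence $x_{k_i}\to\eta\in\partial C$ and functionals $\phi,\sigma_{jm}\in\Sigma_u^*$ with $\sigma_{jm}(\eta)=0$ satisfying (\ref{eq:3.4}). To finish, fix $u\in C^\circ$, set $R=1/d(u,X\setminus C)$, and choose $\lambda>0$ with $x_0\ge_C\lambda u$; then $\sigma_{jm}(x_0)\ge\lambda\sigma_{jm}(u)=\lambda$, while $\sigma_{jm}(x_{k_j})=\sigma_{jm}(x_{k_j}-\eta)\le R\|x_{k_j}-\eta\|$ since $\sigma_{jm}(\eta)=0$ and $\|\sigma_{jm}\|\le R$ by Lemma~\ref{lem:2.1}(3). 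Substituting into (\ref{eq:3.4}) gives, for every $m\ge 1$,
\[
0\le\phi(x_{k_j+m})\le\frac{\phi(x_0)}{\sigma_{jm}(x_0)}\,\sigma_{jm}(x_{k_j})\le\frac{R\,\phi(x_0)}{\lambda}\,\|x_{k_j}-\eta\|.
\]
As $m$ ranges over all positive integers, $k_j+m$ ranges over all $n>k_j$, so the same bound holds for $\phi(x_n)$ whenever $n>k_j$. Letting $j\to\infty$ (so $k_j\to\infty$ and $\|x_{k_j}-\eta\|\to 0$) yields $\phi(x_n)\to 0$. Consequently $\phi(\xi)=0$ for every accumulation point $\xi$ of the orbit; since each such $\xi$ lies in the closed cone $C$ and $\phi(u)=1$ forces $\phi\ne 0$, we conclude $\phi\in C^*\setminus\{0\}$ and $\omega(x_0;f)\subseteq\ker(\phi)\cap C$.

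I expect the main obstacle to be establishing $\lim_k d_C(x_k,x_0)=\infty$. Ruling out a $d_C$-bounded orbit requires manufacturing a genuinely $f$-invariant bounded closed convex set, where the quasi-convexity of Thompson balls and the inequality $\rho\circ f\le\rho$ are essential, precisely because $f$ is not affine and the convex hull of the orbit need not be invariant. The passage from $\sup=\infty$ to $\lim=\infty$ then hinges on verifying the finitely-totally-bounded hypothesis of Ca\l ka's theorem in infinite dimensions, which is exactly where the compactness of the orbit closure enters, via applying the theorem to the orbit as its own metric space. By contrast, once Proposition~\ref{prop:3.2} is available, the extraction of $\phi$ is a routine estimate.
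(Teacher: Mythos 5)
Your proof is correct, and its overall skeleton---Ca\l ka's theorem to force $d_C(f^k(x_0),x_0)\to\infty$, then Proposition \ref{prop:3.2}, then extraction of $\phi$ from (\ref{eq:3.4})---is the same as the paper's. The genuine difference is in how the bounded-orbit alternative is eliminated. The paper does it in one pass: a $d_C$-bounded subsequence plus finite total boundedness of the orbit yields, via Ca\l ka, a $d_C$-bounded orbit with nonempty $d_C$-bounded $\omega$-limit set in $C^\circ$, and then it invokes the external fixed point theorem \cite[Theorem 3.11]{NTMNA} together with the fixed point property to produce a fixed point and a contradiction. You instead split the argument: first you rule out a $d_C$-bounded orbit directly, by building the invariant set $A_r$ from the asymptotic radius function $\rho(x)=\limsup_k d_C(x,f^k(x_0))$, using that Thompson balls are order intervals $[e^{-t}z,e^tz]$ (hence convex) to get convexity of the sublevel sets, and then feeding the closed, bounded, convex, $f$-invariant set $A_r\subseteq C^\circ$ to the fixed point property; only afterwards do you apply Ca\l ka (to the orbit as its own metric space, exactly as the paper does) to upgrade ``orbit unbounded'' to ``no bounded subsequence,'' i.e.\ to (\ref{eq:3.2}). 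In effect you re-prove the special case of Nussbaum's Theorem 3.11 that the paper uses as a black box, which makes your write-up self-contained at the cost of some extra work; your parenthetical remark that this step needs only the fixed point property and normality, not compactness, is accurate and matches where compactness actually enters (finite total boundedness and the convergence $x_{k_i}\to\eta$ inside Proposition \ref{prop:3.2}). Your final extraction of $\phi$ is the paper's argument with a cleaner justification of the key lower bound: you get $\sigma_{jm}(x_0)\ge\lambda$ directly from $x_0\ge_C\lambda u$ and $\sigma_{jm}(u)=1$, where the paper appeals somewhat loosely to weak* compactness; the uniformity in $m$ is handled equivalently by your observation that the bound holds for all $n>k_j$.
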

\begin{proof}
Take $u\in C^\circ$ fixed, and let $R := 1/d(u, X \setminus C)$. Recall that $\Sigma_u^*$ is bounded by $R$ by Lemma \ref{lem:2.1}(3). 

We will first prove that 
\[
\lim_{k\to\infty}d_C(f^k(x_0),x_0)=\infty.
\]
Suppose, by way of contradiction,  that there exists $r>0$ and a subsequence $(f^{k_i}(x_0))_i$ such that $d_C(f^{k_i}(x_0),x_0)\leq r$ for all $i$. Define $M$ to be the norm closure of $\mathcal{O}(x_0;f)$, which is compact in the norm topology by assumption. As the topology of $d_C$ coincides with the norm topology on $C^\circ$, we know that for each closed ball, $B_\delta(x_0):=\{y\in C^\circ\colon d_C(x_0,y)\leq \delta\}$, we have that $M\cap B_\delta(x_0)$ is compact with respect to $d_C$. So, for each $\epsilon>0$, the set  $\mathcal{O}(x_0;f)\cap B_\delta(x_0)$ can be covered with finitely many balls of radius $\epsilon$. This shows that $(\mathcal{O}(x_0;f),d_C)$ is finitely totally bounded. Using Ca\l ka's theorem we see that $\mathcal{O}(x_0;f)$ is bounded with respect to $d_C$. This implies that $\omega(x_0;f)\subseteq C^\circ$ is a nonempty and bounded with respect to $d_C$. As $f$ has the fixed point property on $C^\circ$ with respect to $d_C$, we can apply \cite[Theorem 3.11]{NTMNA} to conclude that the mapping $f$ has a fixed point in $C^\circ$, which contradicts our assumption.

For $k\geq 1$ let $x_k:=f^k(x_0)$. So, $(x_k)_k$ satisfies the assumptions of Proposition \ref{prop:3.2} and we find $\phi,\sigma_{jm}\in\Sigma_u^*$ such that 
\begin{equation}\label{eq:3.8} 
\frac{\phi(x_{k_j+m})}{\phi(x_0)}\leq \frac{\sigma_{jm}(x_{k_j})}{\sigma_{jm}(x_0)}
\end{equation} 
and $\sigma_{jm}(\eta)=0$ where $x_{k_i}\to\eta\in\partial C$ as $i\to\infty$. 

By weak* compactness of $\Sigma_u^*$, the sequence $(\sigma_{jm}(x_0))_j$ is bounded from below, and 
\[
\lim_{j\to\infty} |\sigma_{jm}(x_{k_j})| = \lim_{j\to\infty} |\sigma_{jm}(x_{k_j}-\eta)| 
\leq \lim_{j\to\infty}  R \|x_{k_j}-\eta\| = 0
\]
by Lemma \ref{lem:2.1}(2). This implies that the right hand side of (\ref{eq:3.8}) converges to $0$ uniformly in $m$, and hence 
\begin{equation}\label{eq:3.9} 
\lim_{j\to\infty} \phi(x_{k_j+m}) =0
\end{equation}
uniformly in $m$. 

Now if $\xi\in\omega(x_0;f)$, then there exists a subsequence $(x_{k_{j_n}+m_n})_n$ with $x_{k_{j_n}+m_n}\to \xi$ and $k_{j_n}\to\infty$ as $n\to\infty$.  From (\ref{eq:3.9}) it follows that $\phi(\xi)=0$ and hence $\xi$ is in the kernel, $\mathrm{ker}(\phi)$,  of the positive functional $\phi$; so,  $\omega(x_0,f)\subseteq \mathrm{ker}(\phi)\cap C$. 
\end{proof}
We can now prove Theorem \ref{thm:3.1} as follows. 
\begin{proof}[Proof of Theorem \ref{thm:3.1}]
We know from Corollary \ref{cor:3.3} that $\omega(x_0;f)$ is contained in $\partial C$. It remains to show that the convex hull of $\cup_{x\in C^\circ} \omega(x;f)$, denoted $\Omega$, is contained in $\partial C$. The argument is similar to the one given in \cite[Theorem 5.3]{NTMNA} and relies on the fact that the closure of $\mathcal{O}(x_0;f)$ is compact.. 

Let $z\in C^\circ$ and $\zeta\in\omega(z;f)$. So, there exists a subsequence $(f^{k_i}(z))_i$ converging to $\zeta$ in the norm topology. As $\mathcal{O}(x_0;f)$ has a compact closure, we may assume, after possibly taking a further subsequence, that $f^{k_i}(x_0)$ converges to some $\xi\in\omega(x_0;f)$. 
Obviously, $d_C(f^{k_i}(x_0),f^{k_i}(z))\leq d_C(x_0,z)$ for all $i$, and hence $\xi\sim_C\zeta$ by \cite[Lemma 5.2]{NTMNA}. 

Now let $\eta\in\Omega$. Then there exist $z_1,\ldots,z_n\in C^\circ$, $0<\lambda_1,\ldots,\lambda_n<1$ with $\sum_{i=1}^n \lambda_i=1$, and $\zeta_i\in\omega(z_i;f)$ for $i=1,\ldots,n$ such that $\eta=\sum_{i=1}^n \lambda_i\zeta_i$. 
For each $i=1,\ldots,n$ there exists $\xi_i\in\omega(x_0;f)$ with $\xi_i\sim_C \zeta_i$. 
Clearly $\nu:=\sum_{i=1}^n\lambda_i \xi_i$ is in the convex hull of $\omega(x_0;f)$ and $\nu\sim_C \eta$.

Now suppose that there exists $\chi\in\Omega\cap C^\circ$. By the previous observation we know that there exists $\nu$ in the convex hull of $\omega(x_0;f)$ with $\nu\sim_C \chi$. But this implies that $\nu\in C^\circ$, which contradicts the fact that $
\omega(x_0;f)$ is contained in $\partial C$.
\end{proof} 

\section{The cone spectral radius}
In the remainder of this paper we will discuss Denjoy-Wolff type theorems for Hilbert's metric nonexpansive mappings that come  from scaling  order-preserving homogenous mappings $f\colon C^\circ\to C^\circ$. More precisely, we consider mappings $g\colon \Sigma^\circ\to\Sigma^\circ$ of the form 
\[
g(x) := \frac{f(x)}{q(f(x))}\mbox{\quad for }x\in \Sigma^\circ_q,
\]
where $f\colon C^\circ\to C^\circ$ is an order-preserving homogeneous mapping on the interior of a normal closed cone in a Banach space $(X,\|\cdot\|)$, $q\colon C^\circ \to (0,\infty)$ is a norm continuous homogeneous  function and
\[
\Sigma^\circ_q:= \{x\in C^\circ\colon q(x) =1\}.
\] 
Typical examples of functions $q$ include strictly positive functionals $\varphi$ in $X^*$, $q(\cdot)=\|\cdot \|$, and $q(\cdot )=\|\cdot\|_u$ where $u\in C^\circ$ is fixed. 

To analyze the dynamics of such mappings, we need to introduce a spectral radius for 
order-preserving homogenous mappings $f\colon C^\circ\to C^\circ$. There exist various definitions for the spectral radius for continuous, order-preserving, homogeneous mappings $f\colon C\to C$ if $f$ is defined on the whole of the closed  cone $C$; see \cite{MN}. 
In general, however, $f\colon C^\circ\to C^\circ$ may fail to have a continuous, order-preserving, homogeneous extension to the whole of $C$; see \cite{BNS}.  So, some additional analysis is needed.

\subsection{Approximate eigenvectors}
As the definition of, and the results concerning, the cone spectral radius for mappings $f\colon C^\circ\to C^\circ$ are of some independent interest, we shall work in a slightly more general setting. In fact, we shall consider homogeneous mappings that are defined on  a subset of a normal closed  cone $C\subseteq X$ and that are order-preserving with respect to a, possibly different, normal closed cone $K\subseteq X$ with $C\subseteq K$. 

So, throughout this section we assume that $C\subseteq K$ are normal closed  cones in a Banach space $(X,\|\cdot\|)$. For $u\in C$ with $\|u\|=1$, we denote the part of $u$ (with respect to $K$)  by 
\[
K_u:=\{x\in K\colon \alpha x\leq_K u\leq_K\beta x\mbox{ for some }0<\alpha\leq \beta\}.
\]
We shall consider homogeneous mappings $f\colon C\cap K_u\to C\cap K_u$ that are order-preserving with respect to $K$, so $f(x)\leq_K f(y)$ whenever $x,y\in C\cap K_u$ and $x\leq_K y$. For the applications in this paper we shall eventually assume that $u\in C^\circ$ and $K=C$, in which case $K_u=C^\circ$. The reader may wish to make this simplifying assumption. 

\begin{definition}\label{def:4.1} Suppose that $u\in C$, with $\|u\|=1$, and $f\colon C\cap K_u\to C\cap K_u$ is homogeneous and order-preserving with respect to $K$. We say that $f$ is {\em $u$-bounded} if there exists $M>0$ such that 
\[
f(x)\leq_K M\|x\|u\mbox{\quad for all }x\in C\cap K_u.
\]
\end{definition}
Note that if $u\in C^\circ$, with $\|u\|=1$, and $K=C$, then any homogeneous order-preserving mapping $f\colon C^\circ\to C^\circ$ is $u$-bounded. Indeed, as $C$ is a closed normal cone and $u\in C^\circ$, the order-unit norm $\|\cdot\|_u$ is equivalent to $\|\cdot\|$. So there exists a constant $M_1>0$ such that $\|x\|_u\leq M_1$ for all $x\in C$ with $\|x\|\leq 1$. This implies that $x\leq_C M_1 u$ for all $x\in C$ with $\|x\|\leq 1$, and hence $f(x)\leq_C M_1f(u)$. As $u\in C^\circ$, there exists $M_2>0$ such that $f(u)\leq_C M_2 u$, so that 
\[
f(x)\leq_C M_1M_2 u\mbox{\quad for all $x\in C$ with $\|x\|\leq 1$}. 
\]

Given a homogeneous mapping $f\colon C\cap K_u\to C\cap K_u$ which is order-preserving with respect to $K$, we define for $k\geq 1$, 
\[
\|f^k\|_{C\cap K_u}:=\sup\{\|f^k(x)\|\colon x\in C\cap K_u\mbox{ and } \|x\|\leq 1\}.
\]

\begin{lemma}\label{lem:4.1.1}If $f\colon C\cap K_u\to C\cap K_u$ is a homogeneous mapping which is order-preserving with respect to $K$ and there exists an integer $m\geq 1$ such that $f^m$ is $u$-bounded, then $\|f^k\|_{C\cap K_u}<\infty$ for all $k\geq m$, and 
\[
\lim_{k\to \infty}\|f^k\|^{1/k}_{C\cap K_u}=\inf_{k\geq m}\|f^k\|^{1/k}_{C\cap K_u} = \lim_{k\to\infty} \|f^k(u)\|^{1/k}.
\] 
\end{lemma}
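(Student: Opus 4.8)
The plan is to deduce everything from a single key inequality that bounds the ``operator-type'' quantity $\|f^k\|_{C\cap K_u}$ against the orbit point $\|f^{k-m}(u)\|$, and then to combine this with Fekete's subadditive lemma. Write $a_k := \|f^k\|_{C\cap K_u}$ throughout, and let $\kappa$ denote a normality constant for $K$.

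First I would establish finiteness together with the key inequality. Fix $k\ge m$ and $x\in C\cap K_u$ with $\|x\|\le 1$. Since $f^m$ is $u$-bounded, $f^m(x)\leq_K M\|x\|u\leq_K Mu$, where the second inequality uses $\|x\|\le 1$ and $u\in C\subseteq K$. Because $f^{k-m}$ is order-preserving with respect to $K$ and homogeneous, applying it gives $f^k(x)=f^{k-m}(f^m(x))\leq_K f^{k-m}(Mu)=Mf^{k-m}(u)$; note that $f^m(x)$ and $Mu$ both lie in the domain $C\cap K_u$, so the order-preservation is legitimate. As $f^k(x)\in C\subseteq K$ we also have $0\leq_K f^k(x)$, so normality of $K$ yields $\|f^k(x)\|\le \kappa M\|f^{k-m}(u)\|$. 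Taking the supremum over $x$ gives
\[
a_k\le \kappa M\,\|f^{k-m}(u)\|<\infty\qquad(k\ge m),
\]
which proves that $a_k$ is finite for every $k\ge m$.

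Next I would prove the first equality by Fekete's lemma. A direct homogeneity argument shows submultiplicativity $a_{k+l}\le a_k a_l$ for $k,l\ge m$: for a unit vector $x$, set $y=f^l(x)$, so $y\ne 0$ and $\|y\|\le a_l$, and use $f^k(y)=\|y\|\,f^k(y/\|y\|)$ together with $y/\|y\|\in C\cap K_u$. Since each $a_k$ with $k\ge m$ lies in $(0,\infty)$, the sequence $b_k:=\log a_k$ is subadditive for indices $\ge m$, and the usual division-with-remainder argument (writing $n=qp+r$ with $r$ the representative of $n\bmod p$ in $\{m,\dots,m+p-1\}$) gives $\lim_k b_k/k=\inf_{k\ge m}b_k/k$. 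Exponentiating yields $\lim_k a_k^{1/k}=\inf_{k\ge m}a_k^{1/k}=:L$. Finally I would obtain the third equality by squeezing $\|f^k(u)\|^{1/k}$ between two quantities tending to $L$. Since $u\in C\cap K_u$ with $\|u\|=1$, we have $\|f^k(u)\|\le a_k$, hence $\limsup_k\|f^k(u)\|^{1/k}\le L$. Conversely, the key inequality rewritten as $\|f^j(u)\|\ge a_{j+m}/(\kappa M)$ gives $\|f^j(u)\|^{1/j}\ge (\kappa M)^{-1/j}a_{j+m}^{1/j}$, and since $a_{j+m}^{1/(j+m)}\to L$ while $(j+m)/j\to 1$ one checks $a_{j+m}^{1/j}\to L$, so $\liminf_j\|f^j(u)\|^{1/j}\ge L$. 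Together these give $\lim_k\|f^k(u)\|^{1/k}=L$.

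The routine-but-delicate point, and the only place where genuine care is needed, is the bookkeeping in the last two steps: running Fekete's lemma with subadditivity guaranteed only for indices $\ge m$, and justifying the index-shift limit $a_{j+m}^{1/j}\to L$ uniformly, including the degenerate case $L=0$ (where $\tfrac{1}{j+m}\log a_{j+m}\to-\infty$ and the exponent $\tfrac{j+m}{j}\to 1$ still force the quantity to $0$). The mathematical heart, however, is the key inequality $a_k\le \kappa M\|f^{k-m}(u)\|$: this is exactly where $u$-boundedness of $f^m$ and order-preservation are used to transfer control of the entire ``unit ball'' of $C\cap K_u$ onto the single orbit point $f^{k-m}(u)$.
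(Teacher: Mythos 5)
Your proposal is correct and follows essentially the same route as the paper: your key inequality $\|f^k\|_{C\cap K_u}\leq \kappa M\|f^{k-m}(u)\|$ is exactly the paper's bound $\|f^{k+m}\|_{C\cap K_u}\leq M\kappa\|f^k(u)\|$, and your Fekete-with-offset argument and index-shift squeeze $(\kappa M)^{-1/j}a_{j+m}^{1/j}\leq\|f^j(u)\|^{1/j}\leq a_j^{1/j}$ mirror the paper's steps. The only (harmless) difference is that you obtain finiteness of $\|f^k\|_{C\cap K_u}$ directly from the key inequality, whereas the paper first records the continuous extension of $f^k$ to $0$ and then invokes homogeneity.
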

\begin{proof}
We first show that $f^k$ extends continuously to $0$ for all $k\geq m$. Note that if $k\geq m$ and $\|x_n\|\to 0$, then 
\[
f^k(x_n)  =  f^{k-m}(f^m(x_n)) \leq_K  M\|x_n\|f^{k-m}(u) \leq_K M\|x_n\|\beta_k u,
\]
for some $\beta_k>0$, as $f^m$ is $u$-bounded and $f^{k-m}(u)\in C\cap K_u$.  Thus, for each $k\geq m$, we have that $f^k(x_n)\to 0$ if  $\|x_n\|\to 0$. So, if we define $f^k(0):=0$, we obtain a continuous extension of $f^k$ to $0$. 

By using the homogeneity of $f^k$ it  is easy to show that  $\|f^k\|_{C\cap K_u}<\infty$ for all $k\geq m$. Using sub-additivity we now show that 
\[
\lim_{k\to \infty}\|f^k\|^{1/k}_{C\cap K_u}=\inf_{k\geq m}\|f^k\|^{1/k}_{C\cap K_u}.
\]
Let $a_n:= \log \|f^n\|_{C\cap K_u}$ for all $n\geq 1$. We know that $a_n<\infty$ for all $n\geq m$ and clearly $a_{p+q}\leq a_p+a_q$ for all $p,q\geq m$. Let 
\[L:= \inf_{n\geq m} \frac{a_n}{n}<\infty.\]
Take $\epsilon>0$ and choose $k\geq m$ such that $a_k/k<L+\epsilon$.
For each $n\geq 2k$ we have that $n=:p_nk+q_n+m$,  where $p_n\geq 1$ and $ 0\leq q_n<k$, so that $a_n \leq a_{p_n k} +a_{q_n+m}\leq p_n a_k + a_{q_n+m}$. This gives the inequality 
\[
\frac{a_n}{n} \leq \frac{p_n k}{n}\frac{a_k}{k}+\frac{a_{q_n+m}}{n}.
\]
Letting $n\to\infty$ shows that 
\[
\limsup_{n\to\infty}\frac{a_n}{n}\leq \frac{a_k}{k},
\]
since $p_nk/n\to 1$ as $n\to\infty$, and $a_{j+m}<\infty$ for all $0\leq j<k$. 
Thus, 
\[
L\leq\liminf_{n\to\infty}\frac{a_n}{n}\leq \limsup_{n\to\infty}\frac{a_n}{n}\leq \frac{a_k}{k}\leq L+\epsilon, 
\]
which shows that $\lim_{n\to\infty}a_n/n=L$.

Write $r:= \lim_{k\to\infty} \|f^k\|^{1/k}_{C\cap K_u}$ and $r_k:=\|f^k\|^{1/k}_{C\cap K_u}$. 
It is an easy calculus exercise to show that 
\[
\lim_{k\to\infty} r_{k+n}^{(k+n)/k}=r\mbox{\quad  for all $n\geq 1$}.
\] 

Let $\kappa >0$ be the normality constant of $K$. If $x\in C\cap K_u$ and $\|x\|\leq 1$, then $f^m(x)\leq_K Mu$, so that 
\[f^{k+m}(x)\leq_K Mf^k(u),\] 
which gives
\[
\|f^{k+m}(x)\|\leq M\kappa\|f^k(u)\|
\]
for all $x\in C\cap K_u$ with $\|x\|\leq 1$. So, 
\[
r_{m+k}^{(m+k)/k}\leq (M\kappa \|f^k(u)\|)^{1/k}\leq (M\kappa)^{1/k}r_k,
\]
and hence $\lim_{k\to\infty} \|f^k(u)\|^{1/k}=r$. 
\end{proof}

It is well known; see for example \cite[Theorem 2.38]{AT} or \cite[Theorem 4.4]{Kras}, that as $K$ is normal, $(X,\|\cdot\|)$ admits an equivalent {\em monotone norm} $|\cdot |$, i.e., $|x|\leq|y|$ whenever $0\leq_Kx\leq_K y$.  Given a homogeneous mapping $f\colon C\cap K_u\to C\cap K_u$ which is order-preserving with respect to $K$ and $\epsilon>0$, define $f_{\epsilon, u}\colon C\cap K_u\to C\cap K_u$ by 
\begin{equation}\label{eq:4.1}
f_{\epsilon,u}(x):= f(x) +\epsilon |x|u\mbox{\quad for }x\in C\cap K_u. 
\end{equation}
Note that $f_{\epsilon,u}$ is homogeneous and order-preserving with respect to $K$, as $|\cdot|$ is a monotone norm. Moreover, for each $x\in C\cap K_u$ we have that $f(x)\leq_K f_{\epsilon,u}(x)$ and 
\[
\sup_{x\in C\cap K_u\colon |x|\leq 1} |f(x) -f_{\epsilon,u}(x)|\leq \epsilon |u|.
\]
\begin{theorem}\label{thm:4.1.2}
Let $f\colon C\cap K_u\to C\cap K_u$ be a homogeneous mapping which is order-preserving with respect to $K$, and let $f_{\epsilon,u}\colon C\cap K_u\to C\cap K_u$ be given by (\ref{eq:4.1}). If $f$ is $u$-bounded, then the following assertions hold:
\begin{enumerate}[(i)]
\item For each $\epsilon >0$, the mapping $f_{\epsilon,u}$ has a unique eigenvector $v_{\epsilon,u}\in C\cap K_u$ with $|v_{\epsilon,u}|=1$ and 
\[
f_{\epsilon,u}(v_{\epsilon,u})=:r_{\epsilon,u}v_{\epsilon,u}. 
\]
\item The mapping $\epsilon\mapsto v_{\epsilon,u}$ is continuous in the norm topology for $\epsilon>0$.
\item For each $\epsilon>0$, 
\[
r_{\epsilon,u}=\lim_{k\to\infty}\|f^k_{\epsilon,u}\|_{C\cap K_u}^{1/k} = 
\inf_{k\geq 1}\|f^k_{\epsilon,u}\|_{C\cap K_u}^{1/k}= 
\lim_{k\to\infty}\|f^k_{\epsilon,u}(u)\|^{1/k}
\]
and 
\[ 
\lim_{\epsilon \to 0^+} r_{\epsilon,u} = \lim_{k\to\infty}\|f^k\|_{C\cap K_u}^{1/k}.
\] 
\end{enumerate}
\end{theorem}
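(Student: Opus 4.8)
The plan is to produce the eigenvector in (i) as the unique fixed point of the normalized map $\Phi_\epsilon(x):=f_{\epsilon,u}(x)/|f_{\epsilon,u}(x)|$, and then to read off (ii) and (iii) from it. Take the simplifying setting $u\in C^\circ$, $K=C$, so $K_u=C^\circ$, and put $S:=\{x\in C\cap K_u\colon |x|=1\}$. Since $f$ is $u$-bounded we have the two-sided bounds $\epsilon|x|u\leq_K f_{\epsilon,u}(x)\leq_K (M'+\epsilon)|x|u$ (the lower bound from the added term, the upper from $u$-boundedness and equivalence of $\|\cdot\|$ and $|\cdot|$), so on $S$ every image of $\Phi_\epsilon$ lies in $W_\epsilon:=\{w\in S\colon a_\epsilon u\leq_K w\leq_K b_\epsilon u\}$ for constants $0<a_\epsilon\leq b_\epsilon$. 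Because elements of $W_\epsilon$ are uniformly comparable to $u$, applying $f$ to points of $W_\epsilon$ yields uniform bounds $\gamma_0 u\leq_K f(x)\leq_K M'u$ (the lower bound using $x\geq_K a_\epsilon u$, homogeneity/order-preservation, and $f(u)\geq_K\alpha_0 u$). The crux of (i) is that these uniform bounds force $\Phi_\epsilon$ to be a strict $\delta_K$-contraction on $W_\epsilon$: writing $g(\phi):=(\phi(f(x))+\epsilon)/(\phi(f(y))+\epsilon)$ on $\Sigma_u^*$ and using Lemma~\ref{lem:2.2}, one has $\delta_K(f_{\epsilon,u}(x),f_{\epsilon,u}(y))=\log(\max_\phi g/\min_\phi g)$, and the explicit bounds give $\log(\max g/\min g)\leq c_\epsilon\,\delta_K(x,y)$ with $c_\epsilon<1$; the improvement over the nonexpansiveness $\delta_K(f(x),f(y))\leq\delta_K(x,y)$ is produced precisely by the additive $\epsilon u$. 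Since $\delta_K$ restricted to the normalized slice $W_\epsilon$ is a genuine metric and $W_\epsilon$ is $\delta_K$-closed and bounded, hence complete, Banach's theorem yields a unique fixed point $v_{\epsilon,u}\in W_\epsilon$, that is, $f_{\epsilon,u}(v_{\epsilon,u})=r_{\epsilon,u}v_{\epsilon,u}$ with $r_{\epsilon,u}=|f_{\epsilon,u}(v_{\epsilon,u})|$ and $|v_{\epsilon,u}|=1$; uniqueness over all of $C\cap K_u$ follows because any eigenvector normalizes to a fixed point of $\Phi_\epsilon$, which must lie in $W_\epsilon$.

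For (ii) I would invoke the standard continuous dependence of fixed points on a family of uniform contractions. Fixing $\epsilon_0>0$, for $\epsilon$ in a neighborhood one can use a common set and a common constant $c<1$, so that $\delta_K(v_\epsilon,v_{\epsilon_0})\leq\frac{1}{1-c}\,\delta_K(\Phi_\epsilon(v_{\epsilon_0}),\Phi_{\epsilon_0}(v_{\epsilon_0}))$. The right-hand side tends to $0$ as $\epsilon\to\epsilon_0$, since $\epsilon\mapsto f_{\epsilon,u}(v_{\epsilon_0})=f(v_{\epsilon_0})+\epsilon|v_{\epsilon_0}|u$ is continuous and $M$, hence normalization and $\delta_K$, is continuous by Lemma~\ref{lem:2.2}. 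As the $\delta_K$-topology and the norm topology agree on $C^\circ$, this gives norm-continuity of $\epsilon\mapsto v_{\epsilon,u}$.

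For (iii), note $f_{\epsilon,u}$ is $u$-bounded (with $m=1$), so Lemma~\ref{lem:4.1.1} already gives the three equalities among $\lim_k\|f^k_{\epsilon,u}\|^{1/k}_{C\cap K_u}$, $\inf_k\|f^k_{\epsilon,u}\|^{1/k}_{C\cap K_u}$, and $\lim_k\|f^k_{\epsilon,u}(u)\|^{1/k}$. To identify the common value with $r_{\epsilon,u}$, iterate the eigenrelation to get $f^k_{\epsilon,u}(v_{\epsilon,u})=r_{\epsilon,u}^k v_{\epsilon,u}$; since $v_{\epsilon,u}\sim_K u$, normality of $K$ gives $\|f^k_{\epsilon,u}(u)\|^{1/k}\to r_{\epsilon,u}$. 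For the final display, monotonicity of $\epsilon\mapsto f_{\epsilon,u}$ in $\leq_K$, together with $f\leq_K f_{\epsilon,u}$ and the comparison of iterates $f^k\leq_K f^k_{\epsilon,u}$, yields via normality that $r:=\lim_k\|f^k\|^{1/k}_{C\cap K_u}\leq r_{\epsilon,u}$ and that $r_{\epsilon,u}$ is nondecreasing in $\epsilon$; hence $L:=\lim_{\epsilon\to0^+}r_{\epsilon,u}$ exists and $L\geq r$. For $L\leq r$, use $r_{\epsilon,u}=\inf_j\|f^j_{\epsilon,u}\|^{1/j}_{C\cap K_u}\leq\|f^k_{\epsilon,u}\|^{1/k}_{C\cap K_u}$ for each fixed $k$; letting $\epsilon\to0^+$ and using $\|f^k_{\epsilon,u}\|_{C\cap K_u}\to\|f^k\|_{C\cap K_u}$ gives $L\leq\|f^k\|^{1/k}_{C\cap K_u}$ for every $k$, whence $L\leq\inf_k\|f^k\|^{1/k}_{C\cap K_u}=r$.

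The two genuinely non-formal points are the uniform strict contraction in (i)—where the $M$-function computation must be pushed to a constant $c_\epsilon<1$ that is uniform over the $\delta_K$-bounded slice $W_\epsilon$, which is exactly where the additive perturbation and the two-sided order bounds enter—and the convergence $\|f^k_{\epsilon,u}\|_{C\cap K_u}\to\|f^k\|_{C\cap K_u}$ for fixed $k$ used in the last display. I expect the latter to be the main obstacle: $f^k_{\epsilon,u}\to f^k$ pointwise is immediate from continuity of $f$, but upgrading to convergence of the suprema over the slice requires equicontinuity of the finite composition on the order-bounded sets into which $f$ maps. In finite dimensions this is automatic, since the relevant slice is relatively compact and the perturbation tends to $0$ uniformly on compacta; in general it has to be extracted from the continuity and $u$-boundedness hypotheses.
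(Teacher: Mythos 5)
Most of your proposal tracks the paper's own proof: parts (i) and (ii) are obtained exactly as in the paper, by showing the normalized map is a strict $\delta_K$-contraction on a $\delta_K$-bounded slice around $u$ (with the additive $\epsilon u$ producing the contraction and the constant uniform in $\epsilon$ bounded away from $0$, which is what drives the continuous-dependence estimate in (ii)), and the three equalities in (iii) come from Lemma \ref{lem:4.1.1} plus the iterated eigenrelation and comparability of $v_{\epsilon,u}$ with $u$. Two smaller caveats: completeness of your slice $W_\epsilon$ does not follow from ``$\delta_K$-closed and bounded''; it rests on the completeness of the normalized part $(\Gamma,\delta_K)$ for a closed \emph{normal} cone (the paper cites \cite[Theorem 4.8]{Kras} and then verifies the slice is closed in $\Gamma$). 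Also, you prove only the special case $K=C$, $u\in C^\circ$, whereas the theorem concerns a general pair $C\subseteq K$ and $u\in C$; your argument does transfer, but that should be said.

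The genuine gap is exactly where you suspected it: the final display $\lim_{\epsilon\to 0^+}r_{\epsilon,u}=\lim_{k\to\infty}\|f^k\|^{1/k}_{C\cap K_u}$. Your route needs $\|f^k_{\epsilon,u}\|_{C\cap K_u}\to\|f^k\|_{C\cap K_u}$ as $\epsilon\to 0^+$ for fixed $k$, i.e.\ convergence of suprema over the unit slice, and this cannot be extracted from pointwise convergence plus ``equicontinuity on order-bounded sets'': the natural estimate $f_{\epsilon,u}(x)=f(x)+\epsilon|x|u\leq_K(1+c_x\epsilon)f(x)$ uses a comparability constant $c_x$ with $|x|u\leq_K c_x f(x)$, and $c_x$ is unbounded as $x$ ranges over the slice (even in finite dimensions the slice is only \emph{relatively} compact in $C\cap K_u$, its closure meets the boundary of the part, and $f$ need not extend continuously there, so your ``automatic in finite dimensions'' claim is also not safe). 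The paper avoids the sup-convergence statement altogether: from $u\leq_K\gamma f_{\epsilon,u}(u)$ with $\gamma$ independent of $\epsilon$, and $f_{\epsilon,u}(x)\leq_K 2M_1u$ for $|x|\leq 1$ and $0<\epsilon<M_1$, applying $f_{\epsilon,u}^{k-1}$ gives the uniform domination $f^k_{\epsilon,u}(x)\leq_K 2M_1\gamma\, f^k_{\epsilon,u}(u)$, hence $\|f^k_{\epsilon,u}\|_{C\cap K_u}\leq M_3\|f^k_{\epsilon,u}(u)\|$ (equation (\ref{eq:4.1.13})) with $M_3$ independent of $k$ and $\epsilon$. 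This reduces the whole problem to pointwise convergence at the single point $u$, namely $\|f^N_{\epsilon,u}(u)\|\to\|f^N(u)\|$, which is elementary; the constant $M_3$ is harmless because it washes out under $N$-th roots, yielding
\[
\lim_{\epsilon\to 0^+}r_{\epsilon,u}\;\leq\; M_3^{1/N}\|f^N(u)\|^{1/N}\;\longrightarrow\; r_{C\cap K_u}(f)\qquad(N\to\infty)
\]
by Lemma \ref{lem:4.1.1}. Note that this argument gives only $\limsup_{\epsilon\to 0^+}\|f^k_{\epsilon,u}\|_{C\cap K_u}\leq M_3\|f^k(u)\|$, not the convergence of suprema you posited; the multiplicative-constant formulation is precisely what makes the proof go through, and your stated lemma remains unproven (and doubtful) in infinite dimensions.
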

\begin{proof}
For notational convenience write $f_\epsilon:=f_{\epsilon,u}$, $r_\epsilon:=r_{\epsilon,u}$, and $v_\epsilon:=v_{\epsilon,u}$. Let $\Sigma :=\{x\in C\cap K_u\colon |x|=1\}$ and define $g_\epsilon\colon \Sigma\to\Sigma$ by 
\[
g_\epsilon(x):=\frac{f_\epsilon(x)}{|f_\epsilon(x)|}\mbox{\quad for }x\in\Sigma.
\]
As $\epsilon u\leq_K f_\epsilon(x)$ for all $x\in\Sigma$, 
$\epsilon |u|=|\epsilon u|\leq |f_\epsilon(x)|$ for all $x\in\Sigma$. 

Using the equivalence of the norms $|\cdot |$ and $\|\cdot\|$ and the fact that $f$ is $u$-bounded, we know that there exists $M_1>0$ such that 
\[
f(x)\leq_K M_1|x|u\mbox{\quad for all }x\in C\cap K_u.
\]
This implies for $x\in\Sigma$ that $f_\epsilon(x)\leq_K (M_1+\epsilon)u$ and $|f_\epsilon(x)|\leq (M_1+\epsilon)|u|$. 

By definition of Hilbert's metric $\delta_K$ we know that 
\[
\delta_K(g_\epsilon(x),g_\epsilon(y))=\delta_K(f_\epsilon(x),f_\epsilon(y))
\]
for all $x,y\in \Sigma$, and 
\begin{equation}\label{eq:4.1.2}
\delta_K(g_\epsilon(x),u)=\delta_K(f_\epsilon(x),u)\leq \log \left (\frac{M_1+\epsilon}{\epsilon}\right )
\end{equation}
for all $x\in\Sigma$. 

Let $\Gamma:=\{x\in K_u\colon |x|=1\}$. We know; see \cite[Theorem 4.8]{Kras}, that the metric space $(\Gamma,\delta_K)$ is complete, as $K$ is a closed normal cone in $(X,\|\cdot\|)$.  We will now show that $\Sigma$ is a closed subset of $(\Gamma,\delta_K)$, from which we conclude that $(\Sigma,\delta_K)$ is also a complete metric space. 

Suppose that $(x_k)_k$ is a sequence in $\Sigma$ converging to $\xi$ in $(\Gamma,\delta_K)$, Thus, there exist $0<\alpha_k\leq\beta_k$ such that $\alpha_k\xi\leq_K x_k\leq_K\beta_k\xi$ for $k=1,2,\ldots$ and $\log (\beta_k/\alpha_k)\to 0$ as 
$k\to\infty$.  Since
\[
\alpha_k =|\alpha_k\xi|\leq |x_k|=1\leq |\beta_k \xi|=\beta_k, 
\]
we know that $\alpha_k\leq 1\leq \beta_k$, and hence $\lim_{k\to\infty} \alpha_k=1=\lim_{k\to\infty}\beta_k$. This implies that 
\[
0\leq_k x_k-\alpha_k\xi\leq_K(\beta_k-\alpha_k)\xi,
\]
so that $|x_k-\alpha_k\xi|\leq\beta_k-\alpha_k$, which shows that $\lim_{k\to\infty}|x_k-\xi|=0$.
Since $\|\cdot\|$ and $|\cdot|$ are equivalent, $C$ is closed in $(X,|\cdot|)$, and therefore $\xi\in C$. But also $|\xi|=1$, which shows that $\xi\in\Sigma$ and hence $\Sigma$ is closed in $(\Gamma,\delta_K)$. 

To proceed we fix $\epsilon_1>0$. Define 
\begin{equation}\label{eq:4.1.3} 
R:=\log \left ( \frac{M_1+\epsilon_1}{\epsilon_1}\right)>0\mbox{\quad and \quad}B_R:=\{x\in\Sigma\colon \delta_K(x,u)\leq R\},
\end{equation}
which is a closed subset of $(\Sigma,\delta_K)$, so $(B_R,\delta_K)$ is a complete metric space. 
Note that it follows from (\ref{eq:4.1.2}) that $g_\epsilon(B_R)\subseteq B_R$ for $\epsilon_1<\epsilon$. 

To prove that $f_\epsilon$ has a unique normalised eigenvector $v_\epsilon\in C\cap K_u$, it suffices to show that $g_\epsilon$ has a unique fixed point in $B_R$, where we chose $0<\epsilon_1<\epsilon$. The idea is to prove that $g_\epsilon$ is a contraction mapping on the complete metric space $(B_R,\delta_K)$. In other words, we will show that there exists $0\leq c<1$ such that for each $\epsilon>0$ with $\epsilon_1<\epsilon$, we have that 
\begin{equation}\label{eq:4.1.4}
\delta_K(g_\epsilon(x),g_\epsilon(y))=\delta_K(f_\epsilon(x),f_\epsilon(y))\leq c \delta_K(x,y)
\end{equation}
 for all $x,y\in B_R$. 
 
 To prove (\ref{eq:4.1.4}) we let $x\neq y$  in $B_R$ and define $\alpha:=M(x/y)^{-1}$ and $\beta:=M(y/x)$, so $\alpha x\leq_K y\leq_K\beta x$ and $\delta_K(x,y)=\log(\beta/\alpha)>0$. As $x,y\in\Sigma$, $\alpha=\alpha |x|\leq |y|=1\leq \beta |x|=\beta$, so that $\alpha\leq 1\leq \beta$. Now $\alpha\leq 1$ and $f(x)\leq_KM_1u$ give the inequality  
 \[
 \epsilon(1-\alpha)f(x)\leq_K\epsilon(1-\alpha)M_1u.
 \]
 Combining this with the inequalities $\alpha M_1f(x)\leq_K M_1f(y)$ and $\alpha\epsilon f(x)\leq_K\epsilon f(y)$ gives, 
\[
(\alpha M_1+\epsilon)(f(x)+\epsilon u)\leq_K (M_1+\epsilon)(f(y)+\epsilon u),
\]
which shows that 
\[
\alpha'f_\epsilon(x)\leq_K f_\epsilon(y),\mbox{\quad where }\alpha':=\frac{\alpha M_1+\epsilon}{M_1+\epsilon}.
\]
In a similar way it can be shown that 
\[
f_\epsilon(y)\leq_K \beta'f_{\epsilon}(x),\mbox{\quad where }\beta':=\frac{\beta M_1+\epsilon}{M_1+\epsilon}.
\]

So, if we let $\epsilon':=\epsilon/M_1$, we get that 
\begin{equation}\label{eq:4.1.5}
\delta_K(f_\epsilon(x),f_\epsilon(y))\leq \log\left (\frac{\beta'}{\alpha'}\right )=
 \log\left (\frac{\beta+\epsilon'}{\alpha+\epsilon'}\right ).
\end{equation}
Note that $\delta_K(x,y)\leq 2R$, so that $\log (\beta/\alpha)\leq 2R$, and hence 
$\beta/\alpha\leq e^{2R}=\left (\frac{M_1+\epsilon_1}{\epsilon_1}\right)^2$, as $x,y\in B_R$. Thus, to prove (\ref{eq:4.1.4}) it suffices to show that there exists $0\leq c<1$  such that for all $0<\alpha\leq 1\leq \beta$ with $1<\beta/\alpha\leq e^{2R}$, and for all $\epsilon>0$ with $\epsilon_1<\epsilon$ we have that 
\begin{equation}\label{eq:4.1.6}
\log\left (\frac{\beta+\epsilon'}{\alpha+\epsilon'}\right )\leq c\log\left ( \frac{\beta}{\alpha}\right ).
\end{equation}

Basic algebra gives
\[
\log\left (\frac{\beta+\epsilon'}{\alpha+\epsilon'}\right ) = \log\left (\frac{\beta}{\alpha}\right ) + \log\left ( 1-\left (\frac{\epsilon'}{\alpha+\epsilon'}\right )\left (1-\frac{\alpha}{\beta}\right)\right ).
\]
Writing $\rho:=\beta/\alpha$, so $1<\rho\leq e^{2R}$, and using the fact that $0<\alpha\leq 1$, we derive that 
\begin{equation}\label{eq:4.1.7} 
\log\left (\frac{\beta+\epsilon'}{\alpha+\epsilon'}\right ) 
 \leq 
\log\rho  + \log\left ( 1-\left (\frac{\epsilon'}{1+\epsilon'}\right )\left (1-\frac{1}{\rho}\right)\right ).
\end{equation}
Let $\gamma:=\epsilon'/(1+\epsilon')$, and for $1<\rho<e^{2R}$ consider the continuous function $\rho\mapsto \psi(\rho)$, where 
\[
\psi(\rho) = \frac{\log\rho  + \log\left ( 1-\left (\frac{\epsilon'}{1+\epsilon'}\right )\left (1-\frac{1}{\rho}\right)\right )}{\log \rho} = 1+ \frac{\log\left (1 -\gamma(1-1/\rho)\right)}{\log \rho}.
\]

Thus, to establish (\ref{eq:4.1.6}) it suffices to find $0<\delta\leq 1$, which is independent of $\gamma = \epsilon'/(1+\epsilon') = \epsilon/(M_1+\epsilon)$ for $0<\epsilon_1<\epsilon$, such that 
\begin{multline}\label{eq:4.1.8} 
\sup\left\{ \frac{\log\left (1 -\gamma(1-1/\rho)\right)}{\log \rho}\colon 1<\rho\leq e^{2R}\right\} \\= \sup\left\{ \frac{\log\left (1 -\gamma(1-e^{-\sigma})\right)}{\sigma}\colon 0<\sigma\leq 2R\right\}\leq -\delta.
\end{multline}
As $0<\gamma(1-e^{-\sigma})<1$, we can use Taylor's formula to get 
\[
\frac{1}{\sigma}\log(1-\gamma(1-e^{-\sigma})) =-\sum_{j=1}^\infty \frac{(\gamma(1-e^{-\sigma}))^j}{\sigma j} \leq -\frac{\gamma(1-e^{-\sigma})}{\sigma }
\]
for $0<\sigma\leq 2R$. Now consider the derivative of  $\theta\colon \sigma\mapsto (1-e^{-\sigma})/\sigma$: 
\[
\theta'(\sigma) = \frac{e^{-\sigma}(\sigma+1-e^\sigma)}{\sigma^2}.
\] 
As $e^\sigma>1+\sigma$ for all $\sigma>0$, we conclude that $\theta'(\sigma)<0$ on $0<\sigma\leq 2R$, and hence 
\[
\frac{1}{\sigma}\log(1-\gamma(1-e^{-\sigma}))\leq -\gamma\left (\frac{1-e^{-2R}}{2R}\right ).
\]
So, if we let 
\[
\delta:= \left (\frac{\epsilon_1}{M_1+\epsilon_1}\right )\left (\frac{1-e^{-2R}}{2R}\right )<1,
\]
then (\ref{eq:4.1.8}) holds for all $\epsilon_1<\epsilon$, as $\epsilon_1/(M_1+\epsilon_1)\leq \epsilon/(M_1+\epsilon)$ for all $\epsilon _1\leq \epsilon$. 

It now follows from (\ref{eq:4.1.7})  that $g_\epsilon$ is a contraction mapping on the complete metric space $(B_R,\delta_K)$ with contraction constant $1-\delta$ for all $\epsilon_1<\epsilon$. So, by the contraction mapping theorem $g_\epsilon$, $\epsilon_1<\epsilon $ has a unique fixed point $v_\epsilon\in \Sigma$. Moreover, $v_\epsilon$ is the unique normalised eigenvector in $\Sigma$ of $f_\epsilon$ and 
\[
f_\epsilon(v_\epsilon)=|f_\epsilon(v_\epsilon)|v_\epsilon.
\]
Writing $r_\epsilon:= |f_\epsilon(v_\epsilon)|$ and recalling that $|f_\epsilon(x)|\geq \epsilon |u|$ for all $x\in \Sigma$, we see that $r_\epsilon>\epsilon |u|$. 

We will now prove the second assertion, which is a consequence of the contraction mapping theorem.  Indeed, for $\epsilon_1>0$, let $R$ be as in (\ref{eq:4.1.3})  above. We know that for $\epsilon >\epsilon_1$, that the mapping $g_\epsilon$ is a contraction mapping on $(B_R,\delta_K)$ with contraction constant $0\leq c<1$, where $c$ is independent of $\epsilon$. This implies for $\mu,\epsilon>\epsilon_1$ that 
\[
\delta_K(v_\mu,v_\epsilon)\leq \frac{1}{1-c}\delta_K(g_\mu(v_\epsilon),v_\epsilon) = 
	\frac{1}{1-c}\delta_K(f_\mu(v_\epsilon),v_\epsilon).
\]
As $v_\epsilon\in B_R$, there exist $0<a\leq 1\leq b$ such that $a v_\epsilon\leq_K u\leq_K b v_\epsilon$. So, if  $\mu>\epsilon>\epsilon_1$, then 
\[
(1+a(\mu-\epsilon))v_\epsilon\leq_K f_\mu(v_\epsilon) = f_\epsilon(v_\epsilon) + (\mu-\epsilon)u\leq_K (1+b(\mu-\epsilon))v_\epsilon.
\]
This implies that 
\[
\delta_K(v_\mu,v_\epsilon)\leq \frac{1}{1-c}\delta_K(f_\mu(v_\epsilon),v_\epsilon)\leq 
\frac{1}{1-c}\log\left ( \frac{1+b(\mu-\epsilon)}{1+a(\mu-\epsilon)}\right )
\]
for $\epsilon_1<\epsilon<\mu$. As similar argument shows that 
\[
\delta_K(v_\mu,v_\epsilon)\leq 
\frac{1}{1-c}\log\left ( \frac{1+a(\mu-\epsilon)}{1+b(\mu-\epsilon)}\right )
\]
for $\epsilon_1<\mu<\epsilon$. So, if $\epsilon_k\to \mu$, then $\delta_K(v_{\epsilon_k},v_\mu)\to 0$, so that $\|v_{\epsilon_k}-v_\mu\|\to 0$, as the topology of $\delta_K$ is the same as the topology of $\|\cdot\|$ on $B_R$. 

To prove the third assertion we first note that we can apply Lemma \ref{lem:4.1.1} to $f_\epsilon$ and $f$ to get 
\begin{equation}\label{eq:4.1.9} 
\lim_{k\to\infty} \|f_\epsilon^k\|^{1/k}_{C\cap K_u} = \inf_{k\geq 1} \|f_\epsilon^k\|^{1/k}_{C\cap K_u} = \lim_{k\to\infty} \|f_\epsilon^k(u)\|^{1/k}
\end{equation}
and 
\begin{equation}\label{eq:4.1.10} 
\lim_{k\to\infty} \|f^k\|^{1/k}_{C\cap K_u} = \inf_{k\geq 1} \|f^k\|^{1/k}_{C\cap K_u} = \lim_{k\to\infty} \|f^k(u)\|^{1/k}.
\end{equation}

For $0\leq\epsilon\leq \mu$ it is easy to see that $f_\epsilon^k(x)\leq_K f^k_\mu(x)$ for all $x\in C\cap K_u$ and $k\geq 1$. Using the normality of $K$, we know that there exists a constant $M_2>0$ (independent of $k$) such that 
\[
\|f_\epsilon^k\|_{C\cap K_u} \leq M_2\|f^k_\mu\|_{C\cap K_u}
\]
for $0\leq \epsilon\leq \mu$. It follows that $\|f_\epsilon^k\|_{C\cap K_u}^{1/k} \leq \left(M_2\|f^k_\mu\|_{C\cap K_u}\right)^{1/k}$ for $0\leq\epsilon\leq\mu$, and hence
\[
\lim_{\epsilon\to 0^+}\left (\lim_{k\to\infty}\|f_\epsilon^k\|_{C\cap K_u}^{1/k} \right )\mbox{\quad exists }
\] 
and satisfies
\begin{equation}\label{eq:4.1.11}
\lim_{\epsilon\to 0^+}\left (\lim_{k\to\infty}\|f_\epsilon^k\|_{C\cap K_u}^{1/k} \right )\geq \lim_{k\to\infty}\|f^k\|^{1/k}_{C\cap K_u}.
\end{equation}

Recall that $f_\epsilon(v_\epsilon) =r_\epsilon v_\epsilon$ and $v_\epsilon\in\Sigma$. So, there exist $0<\lambda_1\leq\lambda_2$ (depending on $\epsilon$) such that $\lambda_1u\leq_K v_\epsilon\leq_K\lambda_2 u$. This implies that $\lambda_1f^k_\epsilon(u)\leq_K r_\epsilon^k v_\epsilon\leq_K\lambda_2 f^k_\epsilon(u)$ for all $k\geq 1$, and hence 
\begin{equation}\label{eq:4.1.12} 
r_\epsilon =\lim_{k\to\infty} \|f^k_\epsilon (u)\|^{1/k}.
\end{equation}

It remains to show that $\lim_{\epsilon\to 0^+} r_\epsilon =\lim_{k\to\infty}\|f^k(u)\|^{1/k}$. 
Combining (\ref{eq:4.1.9}--\ref{eq:4.1.12}) we see  that it suffices to show that 
\[
\lim_{\epsilon\to 0^+} r_\epsilon \leq \lim_{k\to\infty}\|f^k(u)\|^{1/k}=:r_{C\cap K_u}(f).
\]
First note that there exists $\gamma>0$, independent of $\epsilon\geq 0$, such that $u\leq_K\gamma f_\epsilon(u)$. For each $|x|\leq 1$ and $0<\epsilon<M_1$ we also know that 
\[
f_\epsilon(x)\leq_K (M_1+\epsilon) u\leq_K 2M_1 u.
\] 
Thus, for each $k\geq 1$ and $|x|\leq 1$ we have that $f^k_\epsilon(x)\leq_K 2M_1\gamma f^k_\epsilon(u)$.  As the norms $|\cdot |$ and $\|\cdot\|$ are equivalent on $X$, there exists a constant $M_3>0$ such that 
\begin{equation}\label{eq:4.1.13} 
\|f^k_\epsilon(x)\|\leq M_3\|f_\epsilon^k(u)\| 
\end{equation}
for all $x\in C\cap K_u$ and $\|x\|\leq 1$. 

Now fix $\eta >0$ and choose $N\geq 1$ so large that 
\begin{equation}\label{eq:4.1.14} 
M_3^{1/N}\|f^N(u)\|^{1/N}< r_{C\cap K_u}(f) + \eta/2.
\end{equation}
Since $\lim_{\epsilon\to 0^+}\|f_\epsilon^N(u)\|=\|f^N(u)\|$, there exists $\epsilon(\eta)>0$ such that for $0<\epsilon <\epsilon(\eta)$, 
\[
M_3^{1/N}\|f^N_\epsilon(u)\|^{1/N}< r_{C\cap K_u}(f) + \eta.
\]
From (\ref{eq:4.1.13}) we now deduce that 
\begin{eqnarray*}
\|f_\epsilon^N\|^{1/N}_{C\cap K_u} & = & \sup\{\|f^N_\epsilon(x)\|^{1/N}\colon x\in C\cap K_u\mbox{ and } \|x\|\leq 1\}\\
 & \leq &  M_3^{1/N}\|f_\epsilon^N(u)\|^{1/N}\\
 & < & r_{C\cap K_u}(f) +\eta
\end{eqnarray*} 
for $0<\epsilon<\epsilon(\eta)$. It now follows from (\ref{eq:4.1.9}) that 
\[
\lim_{k\to\infty}\|f^k_\epsilon\|^{1/k}_{C\cap K_u}\leq \|f^N_\epsilon\|^{1/N}_{C\cap K_u} < r_{C\cap K_u}(f)+\eta.
\]
As $\eta >0$ was arbitrary, we conclude that $\lim_{\epsilon\to 0^+} r_\epsilon = r_{C\cap K_u}(f)$ and we are done. 
\end{proof}
\begin{remark}
The general idea of using perturbations of $f$ like $f_{\epsilon,u}$ has  been exploited before; see for example \cite{AGN}, \cite[Lemma 2.1]{MN}, and \cite[Lemmas 3.2, 3.9 and 4.1]{NTMNA}. 
In particular,  the reader should compare Section 3 and 4 of \cite{NTMNA}, where results similar to Theorem \ref{thm:4.1.2} are established, and \cite[Lemma 7.6]{AGN} which provides a proof of \cite[Lemma 3.2]{NTMNA}. 

We also remark that if $w\in K_u$ and $f$ is a $u$-bounded homogeneous mapping which is order-preserving with respect to $K$, then there exists a constant $M_w>0$ such that $f(x)\leq_K M_w \|x\| w$ for all $x\in C\cap K_u$. Now for $\epsilon >0$ we can consider the mapping $f_{\epsilon,w}\colon C\cap K_u\to C\cap K_u$ given by $f_{\epsilon,w}(x) =  f(x) +\epsilon |x|w$ for all $x\in C\cap K_u$. Then $f_{\epsilon,w}$ has a unique eigenvector $v_{\epsilon,w}\in C\cap K_u$ with $\|v_{\epsilon,w}\|=1$. A slight variant in the proof of Theorem \ref{thm:4.1.2} shows that the mapping $(\epsilon,u)\mapsto v_{\epsilon,u}$ is norm continuous; see \cite[Lemma 4.1]{NTMNA} for related results.
\end{remark}

If $f\colon C\cap K_u\to C\cap K_u$ is  $u$-bounded, homogeneous, and order-preserving with respect to $K$, then we define the {\em partial spectral radius of $f$} by 
\[
r_{C\cap K_u}(f) :=\lim_{k\to\infty}\|f^k\|^{1/k}_{C\cap K_u}.
\]
Note that, as $\|f^{k+m}\|_{C\cap K_u} \leq \|f^k\|_{C\cap K_u}\|f^m\|_{C\cap K_u}$ for all $m,k\geq 1$, it follows from the sub-additive lemma that $r_{C\cap K_u}(f) =\inf_k\|f^k\|_{C\cap K_u}^{1/k}<\infty$. Using the notation from Theorem \ref{thm:4.1.2} we obtain the following immediate corollary, which we shall need later. 
 \begin{corollary}\label{cor:4.1.3} 
 If $f \colon C\cap K_u\to C\cap K_u$ is u-bounded, homogeneous and order-preserving with respect to $K$, then 
 \[
 \lim_{\epsilon\to 0^+} r_{C\cap K_u}(f_{\epsilon,u}) = r_{C\cap K_u}(f).
 \]
 \end{corollary}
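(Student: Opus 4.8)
The plan is to recognize this as a direct consequence of Theorem \ref{thm:4.1.2}, so that the only real work is bookkeeping: matching the partial spectral radius $r_{C\cap K_u}(f_{\epsilon,u})$ of the perturbed map against the eigenvalue $r_{\epsilon,u}$ of that same map, and then reading off the limit as $\epsilon\to 0^+$.

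First I would check that $f_{\epsilon,u}$ itself satisfies the standing hypotheses, so that its partial spectral radius is even defined. It is homogeneous and order-preserving with respect to $K$ (as noted just after (\ref{eq:4.1})), and since $f$ is $u$-bounded there is $M_1>0$ with $f(x)\leq_K M_1|x|u$, whence $f_{\epsilon,u}(x)=f(x)+\epsilon|x|u\leq_K (M_1+\epsilon)|x|u$ for all $x\in C\cap K_u$. Thus $f_{\epsilon,u}$ is $u$-bounded, and so by the definition of the partial spectral radius the limit $r_{C\cap K_u}(f_{\epsilon,u})=\lim_{k\to\infty}\|f^k_{\epsilon,u}\|^{1/k}_{C\cap K_u}$ exists and is finite.

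Next I would apply the first chain of equalities in Theorem \ref{thm:4.1.2}(iii) to the map $f_{\epsilon,u}$ for each fixed $\epsilon>0$. This gives exactly $r_{\epsilon,u}=\lim_{k\to\infty}\|f^k_{\epsilon,u}\|^{1/k}_{C\cap K_u}=r_{C\cap K_u}(f_{\epsilon,u})$, identifying the eigenvalue $r_{\epsilon,u}$ with the partial spectral radius of $f_{\epsilon,u}$. Finally, the second statement in Theorem \ref{thm:4.1.2}(iii) asserts that $\lim_{\epsilon\to 0^+} r_{\epsilon,u}=\lim_{k\to\infty}\|f^k\|^{1/k}_{C\cap K_u}$, and the right-hand side is by definition $r_{C\cap K_u}(f)$. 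Combining the two identities yields $\lim_{\epsilon\to 0^+} r_{C\cap K_u}(f_{\epsilon,u})=r_{C\cap K_u}(f)$, as claimed.

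I expect no genuine obstacle here: the entire substance is already contained in Theorem \ref{thm:4.1.2}, and the single point requiring care is the verification that the perturbed map $f_{\epsilon,u}$ again falls under the hypotheses of that theorem, so that the phrases \emph{``partial spectral radius of $f_{\epsilon,u}$''} and \emph{``the eigenvalue $r_{\epsilon,u}$''} denote the same number.
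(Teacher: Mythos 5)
Your proposal is correct and is exactly the argument the paper intends: the paper states this as an ``immediate corollary'' of Theorem \ref{thm:4.1.2}, since part (iii) of that theorem identifies $r_{\epsilon,u}$ with $\lim_{k\to\infty}\|f^k_{\epsilon,u}\|^{1/k}_{C\cap K_u}=r_{C\cap K_u}(f_{\epsilon,u})$ and also gives $\lim_{\epsilon\to 0^+}r_{\epsilon,u}=\lim_{k\to\infty}\|f^k\|^{1/k}_{C\cap K_u}=r_{C\cap K_u}(f)$. Your added verification that $f_{\epsilon,u}$ is itself $u$-bounded (so that its partial spectral radius is well defined) is the right point of care and is implicitly used in the paper's proof of Theorem \ref{thm:4.1.2}, where Lemma \ref{lem:4.1.1} is applied to $f_{\epsilon,u}$.
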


Of particular interest to us will be the case where $K=C$ and $K_u=C^\circ$. In that case the partial spectral radius $r_{C^\circ}(f)$ satisfies a Collatz-Wielandt formula, which generalizes \cite[Corollary 37]{GV}; see also \cite{AGN} and \cite[Section 5.6]{LNBook}.
 
\begin{theorem}[Collatz-Wielandt formula I]\label{thm:4.1.4}
If $C$ is a closed normal  cone with nonempty interior in a Banach space $X$ and $f\colon C^\circ\to C^\circ$ is order-preserving and homogeneous, then 
\[
r_{C^\circ}(f)=\inf_{y\in C^\circ} M(f(y)/y).
\]
\end{theorem}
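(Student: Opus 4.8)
The plan is to prove the two inequalities $r_{C^\circ}(f)\leq \inf_{y\in C^\circ} M(f(y)/y)$ and $\inf_{y\in C^\circ} M(f(y)/y)\leq r_{C^\circ}(f)$ separately, where throughout I take $K=C$, so that $K_u=C^\circ$ and the machinery of Theorem \ref{thm:4.1.2} applies. Recall first that, as noted after Definition \ref{def:4.1}, every order-preserving homogeneous $f\colon C^\circ\to C^\circ$ is automatically $u$-bounded for $u\in C^\circ$, so $r_{C^\circ}(f)=\lim_{k\to\infty}\|f^k\|_{C^\circ}^{1/k}$ is well defined and finite.

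For the upper inequality I would fix $y\in C^\circ$ and set $\lambda:=M(f(y)/y)$, so that $f(y)\leq_C \lambda y$ by the definition of $M(\cdot/\cdot)$. Since $f$ is order-preserving and homogeneous of degree one, a straightforward induction gives $f^k(y)\leq_C \lambda^k y$ for all $k\geq 1$. Because $y\in C^\circ$, the order-unit norm $\|\cdot\|_y$ is equivalent to $\|\cdot\|$, so there is a constant $M_1>0$ with $x\leq_C M_1 y$ for every $x\in C$ with $\|x\|\leq 1$. Applying $f^k$ and using homogeneity and monotonicity yields $f^k(x)\leq_C M_1 f^k(y)\leq_C M_1\lambda^k y$, and normality of $C$ (with constant $\kappa$) gives $\|f^k(x)\|\leq \kappa M_1\lambda^k\|y\|$ uniformly over all such $x$. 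Hence $\|f^k\|_{C^\circ}\leq \kappa M_1\|y\|\,\lambda^k$, and taking $k$-th roots and letting $k\to\infty$ produces $r_{C^\circ}(f)\leq\lambda$. Taking the infimum over $y\in C^\circ$ completes this direction.

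For the lower inequality I would invoke the approximate eigenvectors of Theorem \ref{thm:4.1.2}. For each $\epsilon>0$ the perturbed map $f_{\epsilon,u}$ has a normalised eigenvector $v_{\epsilon,u}\in C^\circ$ with $f_{\epsilon,u}(v_{\epsilon,u})=r_{\epsilon,u}v_{\epsilon,u}$. Since $|x|u\geq_K 0$, we have the pointwise inequality $f(x)\leq_C f_{\epsilon,u}(x)$, and in particular
\[
f(v_{\epsilon,u})\leq_C f(v_{\epsilon,u})+\epsilon|v_{\epsilon,u}|u = f_{\epsilon,u}(v_{\epsilon,u}) = r_{\epsilon,u}v_{\epsilon,u}.
\]
By the definition of $M(\cdot/\cdot)$ this gives $M(f(v_{\epsilon,u})/v_{\epsilon,u})\leq r_{\epsilon,u}$, whence $\inf_{y\in C^\circ}M(f(y)/y)\leq r_{\epsilon,u}$. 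Letting $\epsilon\to 0^+$ and using $\lim_{\epsilon\to 0^+}r_{\epsilon,u}=r_{C^\circ}(f)$ from Theorem \ref{thm:4.1.2}(iii) (equivalently Corollary \ref{cor:4.1.3}) yields $\inf_{y\in C^\circ}M(f(y)/y)\leq r_{C^\circ}(f)$, and combining the two inequalities gives equality.

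The main obstacle has, in effect, already been overcome: the existence, normalisation, and limiting behaviour of the approximate eigenvectors $v_{\epsilon,u}$ are exactly the content of Theorem \ref{thm:4.1.2}, which is where the genuine analytic work (the contraction-mapping argument in Hilbert's metric and the spectral-radius limit) resides. Given that theorem, the lower bound is a one-line consequence of the domination $f\leq_C f_{\epsilon,u}$ together with the eigenvector equation, and the upper bound is a clean iteration estimate. The only point demanding a little care is making the constant $M_1$ in the upper-bound estimate uniform over the unit ball of $C^\circ$; this is precisely where the hypothesis $y\in C^\circ$ is used, via the equivalence of $\|\cdot\|_y$ with $\|\cdot\|$ (and the attainment of the order-unit infimum for the closed cone $C$).
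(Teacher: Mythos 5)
Your proof is correct and follows essentially the same route as the paper: the lower bound is verbatim the paper's argument (domination $f\leq_C f_{\epsilon,u}$, the eigenvector equation from Theorem \ref{thm:4.1.2}, and the limit $r_{\epsilon,u}\to r_{C^\circ}(f)$ from Corollary \ref{cor:4.1.3}). For the upper bound the paper simply applies Lemma \ref{lem:4.1.1} in the equivalent order-unit norm $\|\cdot\|_y$, where $\|f^k\|_{y,C^\circ}=\|f^k(y)\|_y$ and hence $r_{C^\circ}(f)=\inf_k\|f^k(y)\|_y^{1/k}\leq\|f(y)\|_y=M(f(y)/y)$; your induction $f^k(y)\leq_C M(f(y)/y)^k y$ together with normality is the same estimate carried out by hand.
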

\begin{proof}
Let $y\in C^\circ$. and recall that, as $C$ is normal, the norms $\|\cdot\|$ and $\|\cdot\|_y$ are equivalent.  Note that for each $k\geq 1$ and $0\leq_C x\leq_C y$ we have that $\|f^k(x)\|_y\leq \|f^k(y)\|_y$, as $f$ is order-preserving. This implies that 
\[\|f^k\|_{y,C^\circ}:=\sup\{\|f^k(x)\|_y\colon x\in C^\circ\mbox{ with }\|x\|_y\leq 1\}= \|f^k(y)\|_y.\]
It now follows from Lemma \ref{lem:4.1.1} that,
\[
r_{C^\circ}(f) = \lim_{k\to\infty}\|f^k(y)\|_y^{1/k} = \inf_{k\geq 1}\|f^k(y)\|_y^{1/k}\leq M(f(y)/y),
\]
as $\|f(y)\|_y=M(f(y)/y)$, so that $r_{C^\circ}(f)\leq \inf_{y\in C^\circ}M(f(y)/y)$. 

Now let $\epsilon >0$, $u\in C^\circ$ and  $f_{\epsilon,u}$ be as in (\ref{eq:4.1}). 
So, $f(x)\leq_C f_{\epsilon,u}(x)$ for all $x\in C^\circ$ and $r_{C^\circ}(f_{\epsilon,u})\to r_{C^\circ}(f)$ as $\epsilon\to 0^+$ by Corollary \ref{cor:4.1.3}. 
We know from Theorem \ref{thm:4.1.2} that there exist $v_{\epsilon,u}\in C^\circ$ such that  $f_{\epsilon,u}(v_{\epsilon,u})=r_{C^\circ}(f_{\epsilon,u}) v_{\epsilon,u}$. Thus, $M(f_{\epsilon,u}(v_{\epsilon,u})/v_{\epsilon,u})=r_{C^\circ}(f_{\epsilon,u})$, so that 
\[
r_{C^\circ}(f) = \lim_{\epsilon\to 0^+} M(f_{\epsilon,u}(v_{\epsilon,u})/v_{\epsilon,u})\geq 
\liminf_{\epsilon\to 0^+} M(f(v_{\epsilon,u})/v_{\epsilon,u})\geq \inf_{y\in C^\circ} M(f(y)/y).
\]
\end{proof}

The following two basic observations concerning $r_{C^\circ}(f)$ will be useful to us later. The first one is essentially \cite[Lemma 2.2]{LN1}. For completeness we include the short proof.
 \begin{lemma}\label{lem:4.1.5} 
Let $C$ be a normal closed cone with nonempty interior in a Banach space $X$. If $f\colon C^\circ\to C^\circ$ is an order-preserving homogeneous mapping, then $r_{C^\circ}(f)>0$.  
\end{lemma}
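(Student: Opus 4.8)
The plan is to bound $r_{C^\circ}(f)$ from below using the Collatz-Wielandt formula established in Theorem \ref{thm:4.1.4}, which tells us that
\[
r_{C^\circ}(f)=\inf_{y\in C^\circ} M(f(y)/y).
\]
Since $M(f(y)/y)\geq 0$ for every $y\in C^\circ$, it suffices to rule out the possibility that this infimum is $0$; equivalently, I want to produce a single $y\in C^\circ$ together with a uniform positive lower bound for $M(f(y)/y)$, or to show directly that $M(f(y)/y)$ cannot be made arbitrarily small.

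First I would fix a convenient point, say $u\in C^\circ$ with $\|u\|=1$, and examine $M(f(u)/u)$. Because $u\in C^\circ$ and $f(u)\in C^\circ$, both $f(u)$ and $u$ are interior points lying in the same part $C^\circ$ of $C$, so $f(u)\sim_C u$ and hence $M(f(u)/u)\in(0,\infty)$ is a finite \emph{strictly positive} number; in particular $r_{C^\circ}(f)\le M(f(u)/u)<\infty$, recovering finiteness, but more importantly I want a lower bound. The key structural idea is to exploit order-preservation and homogeneity to show that $f$ cannot shrink distances to the zero vector too drastically. Concretely, for $y\in C^\circ$ set $\beta:=M(y/u)$ and $\alpha:=M(u/y)^{-1}$, so that $\alpha u\leq_C y\leq_C \beta u$ with $0<\alpha\leq\beta$. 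Applying $f$, which is order-preserving and homogeneous, gives $\alpha f(u)\leq_C f(y)\leq_C \beta f(u)$. Using these two-sided comparisons I can relate $M(f(y)/y)$ to $M(f(u)/u)$: since $f(y)\geq_C \alpha f(u)$ and $y\leq_C \beta u$, one obtains
\[
M(f(y)/y)\geq \frac{\alpha}{\beta}\,M(f(u)/u),
\]
where $\alpha/\beta=e^{-\delta_C(y,u)}$ involves Hilbert's metric distance. This shows the infimum is controlled by the behaviour of $\delta_C(y,u)$ as $y$ ranges over $C^\circ$.

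The main obstacle is that $\delta_C(y,u)$ is unbounded on $C^\circ$, so the naive bound above degenerates to $0$ as $y$ approaches $\partial C$, and I cannot simply take the infimum of the right-hand side. To circumvent this I would instead estimate $M(f(y)/y)$ intrinsically rather than by comparison with $u$. The cleaner route, following \cite[Lemma 2.2]{LN1}, is to pick any $y\in C^\circ$ and observe that $f(y)\in C^\circ$ means there exists $\lambda>0$ with $f(y)\geq_C \lambda y$, namely $\lambda = M(y/f(y))^{-1}>0$; this $\lambda$ is a positive lower bound for $M(f(y)/y)$ \emph{at that particular $y$}, but to get a lower bound uniform in $y$ I would iterate. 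The decisive step is to note that homogeneity and order-preservation give, for the fixed $u$, the chain $f^{k}(u)\geq_C \lambda_k u$ with $\lambda_k$ controlling $\|f^k(u)\|$ from below, and then invoke $r_{C^\circ}(f)=\lim_k\|f^k(u)\|^{1/k}$ from Lemma \ref{lem:4.1.1}. Since $f(u)\in C^\circ$ gives $f(u)\geq_C c\,u$ for some $c>0$, order-preservation yields $f^k(u)\geq_C c^k u$ inductively, whence $\|f^k(u)\|\geq c^k\|u\|$ (using a monotone norm) and therefore $r_{C^\circ}(f)=\lim_k\|f^k(u)\|^{1/k}\geq c>0$. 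The anticipated difficulty is purely in organising these monotone-norm estimates cleanly so that the constant $c=M(u/f(u))^{-1}>0$ emerges transparently; the positivity of $c$ is guaranteed precisely because $f(u)$ is an interior point, which is where the hypothesis $f(C^\circ)\subseteq C^\circ$ is essential.
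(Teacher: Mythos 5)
Your proof is correct, and its engine is the same as the paper's: interiority of $f(u)$ gives $f(u)\geq_C c\,u$ for some $c>0$, and order-preservation plus homogeneity iterate this to $f^k(u)\geq_C c^k u$; both arguments then conclude that $c\leq r_{C^\circ}(f)$. The only genuine difference is how that last step is finished. The paper argues by contradiction using only closedness and pointedness of $C$: if $c>r_{C^\circ}(f)+\epsilon$, then $c^{-k}f^k(u)\to 0$ in norm while $c^{-k}f^k(u)-u\in C$ for every $k$, so $-u\in C$, which is impossible. You finish directly: normality supplies an equivalent monotone norm, so $c^k u\leq_C f^k(u)$ forces $\|f^k(u)\|\geq c^k\|u\|/\kappa$ (with $\kappa$ a normality/equivalence constant that disappears in the $k$-th root), and then $r_{C^\circ}(f)=\lim_k\|f^k(u)\|^{1/k}\geq c$ by Lemma \ref{lem:4.1.1}, which applies because every order-preserving homogeneous mapping of $C^\circ$ is $u$-bounded for $u\in C^\circ$ (the remark after Definition \ref{def:4.1}). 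Your version is more direct, avoiding the contradiction, at the price of invoking normality and Lemma \ref{lem:4.1.1} where the paper needs only the definition of $r_{C^\circ}(f)$ and the closedness of $C$; both sets of ingredients are available under the hypotheses, so this is largely a matter of taste. Finally, your opening detour through the Collatz--Wielandt formula (Theorem \ref{thm:4.1.4}) is rightly abandoned --- as you observe, the comparison bound $M(f(y)/y)\geq e^{-\delta_C(y,u)}M(f(u)/u)$ degenerates as $y$ approaches $\partial C$ --- and it could simply be cut, since the argument you actually complete never uses it (it would not be circular, as Theorem \ref{thm:4.1.4} does not depend on this lemma, but it adds nothing).
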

\begin{proof}
Pick $u,x\in C^\circ$. As $f(x)\in C^\circ$, there exists $\alpha>0$ such that $\alpha x\leq_C f(x)$, so that $\alpha^kx\leq_C f^k(x)$ for all $k\geq 1$. We will show that $\alpha\leq  r_{C^\circ}(f)$. 

Suppose that  $\alpha > r_{C^\circ}(f)+\epsilon$ for some $\epsilon >0$. The definition of $r_{C^\circ}(f)$ implies that $\|f^k(x)\|\leq (r_{C^\circ}(f) +\epsilon)^k$ 
for all $k\geq 1$ sufficiently large. Since $\alpha > r_{C^\circ}(f)+\epsilon$, we conclude that  $\alpha^{-k} f^k(x)\to 0$ as $k\to\infty$. However, $\alpha^{-k}f^k(x) -x\in C$ for all $k\geq 1$, so that $-x\in C$, which is impossible. 
\end{proof}

\begin{lemma}\label{cor:orbitzero} 
Let $C$ be normal closed cone  with nonempty interior in a Banach space $X$. If $f\colon C^\circ\to C^\circ$ is an order-preserving homogeneous mapping with $r_{C^\circ}(f)=1$, then for each $x\in C^\circ$ the orbit $\mathcal{O}(x;f)$ does not accumulate at $0$. 
\end{lemma}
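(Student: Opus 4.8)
The plan is to rule out accumulation at $0$ by establishing the much stronger, purely quantitative fact that the entire orbit stays uniformly bounded away from $0$ in norm: I will exhibit a constant $c>0$, depending on $x$ but not on $k$, with $\|f^k(x)\|\geq c$ for every $k\geq 0$, which clearly forbids any subsequence of $\mathcal{O}(x;f)$ from converging to $0$. The mildly counterintuitive point is that the partial spectral radius is normally an \emph{upper}-bound quantity (it controls how fast orbits can grow), yet here I use it to produce a \emph{lower} bound. The resolution is to combine two comparisons: a lower bound $\|f^k\|_{C^\circ}\geq 1$ that comes from the infimum characterisation of $r_{C^\circ}(f)=1$, and an upper bound $\|f^k\|_{C^\circ}\leq \kappa M_1\|f^k(x)\|$ that comes from the fact that, in the cone order, the orbit of a fixed interior point dominates the orbit of every norm-unit vector up to a constant.

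First I would record the lower bound. Since $u\in C^\circ$ and $f\colon C^\circ\to C^\circ$ is order-preserving and homogeneous, $f$ is $u$-bounded, so its partial spectral radius is defined and, by Lemma~\ref{lem:4.1.1} together with the remark following Corollary~\ref{cor:4.1.3}, satisfies $r_{C^\circ}(f)=\inf_{k\geq 1}\|f^k\|_{C^\circ}^{1/k}$. The hypothesis $r_{C^\circ}(f)=1$ therefore forces $\|f^k\|_{C^\circ}^{1/k}\geq 1$, i.e. $\|f^k\|_{C^\circ}\geq 1$, for every $k\geq 1$. I would stress that this step genuinely uses the $\inf$-characterisation of $r_{C^\circ}(f)$ rather than merely the limit.

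Next I would establish the domination estimate. Fix $x\in C^\circ$. As $C$ is normal, the order unit norm $\|\cdot\|_x$ is equivalent to $\|\cdot\|$, so there is $M_1=M_1(x)>0$ with $\|y\|_x\leq M_1\|y\|$ for all $y$. For $y\in C$ one has $\|y\|_x=M(y/x)$ — the lower order constraint in the definition of $\|\cdot\|_x$ is automatic because $y\in C$ — so every $y\in C^\circ$ with $\|y\|\leq 1$ satisfies $M(y/x)\leq M_1$, i.e. $y\leq_C M_1 x$. Applying $f^k$, which is order-preserving and homogeneous, gives $f^k(y)\leq_C M_1 f^k(x)$, and normality (with constant $\kappa$) yields $\|f^k(y)\|\leq \kappa M_1\|f^k(x)\|$. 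Taking the supremum over all such $y$ produces
\[
\|f^k\|_{C^\circ}\leq \kappa M_1\,\|f^k(x)\|\quad\text{for all }k\geq 1.
\]

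Combining the two estimates gives $\|f^k(x)\|\geq \|f^k\|_{C^\circ}/(\kappa M_1)\geq 1/(\kappa M_1)>0$ for all $k\geq 1$, while $\|f^0(x)\|=\|x\|>0$; hence $\mathcal{O}(x;f)$ stays at norm distance at least $\min\{\|x\|,1/(\kappa M_1)\}$ from $0$ and cannot accumulate there. The delicate points, and where I would be most careful, are the identification $\|y\|_x=M(y/x)$ on $C$ and the uniformity of the comparison constant $M_1$ in $k$: it depends only on the equivalence of $\|\cdot\|_x$ and $\|\cdot\|$, hence only on $x$, so it survives the application of $f^k$. Everything else is a routine use of order-preservation, homogeneity and normality, so I do not expect a serious obstacle once the right lower bound $\|f^k\|_{C^\circ}\geq 1$ is in hand.
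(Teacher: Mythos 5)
Your proof is correct, but it takes a genuinely different route from the paper's. The paper proves this lemma with the approximate-eigenvector machinery of Theorem \ref{thm:4.1.2}: it bounds the eigenvectors $v_{\epsilon,u}$ of the perturbed maps $f_{\epsilon,u}$ above by a fixed multiple $z$ of $x$, propagates the order inequality through the iterates to obtain $v_{\epsilon,u}\leq_C \prod_{i=1}^k(1+\epsilon\beta_i)f^k(z)$, and then lets $\epsilon\to 0^+$ to conclude $|f^k(z)|\geq 1$. You avoid Theorem \ref{thm:4.1.2} entirely: you combine the sub-additive (Fekete) characterisation $r_{C^\circ}(f)=\inf_{k\geq 1}\|f^k\|_{C^\circ}^{1/k}$ --- which is indeed available from Lemma \ref{lem:4.1.1}, the sub-additivity remark actually appearing just \emph{before} Corollary \ref{cor:4.1.3}, a harmless misattribution --- with the order-unit-norm domination $y\leq_C M_1 x$ for all $y\in C^\circ$ with $\|y\|\leq 1$, yielding $\|f^k\|_{C^\circ}\leq \kappa M_1\|f^k(x)\|$ and hence the uniform bound $\|f^k(x)\|\geq 1/(\kappa M_1)$. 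Each step checks out: the identity $\|y\|_x=M(y/x)$ for $y\in C$ is valid (the paper itself uses it in the proof of Theorem \ref{thm:4.1.4}), and $M_1$ depends only on the equivalence of $\|\cdot\|_x$ with $\|\cdot\|$, hence only on $x$. Your argument is more elementary, needing only Lemma \ref{lem:4.1.1} and normality rather than the contraction-mapping construction of the $v_{\epsilon,u}$; it is in fact close in spirit to the paper's proof of the Collatz--Wielandt formula (Theorem \ref{thm:4.1.4}), which shows $r_{C^\circ}(f)=\inf_{k\geq 1}\|f^k(y)\|_y^{1/k}$ for every $y\in C^\circ$, from which the lemma with $y=x$ also follows in one line. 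What the paper's heavier argument buys is uniformity with the approximate-eigenvector toolkit used throughout that section; as a standalone proof of this lemma, yours is shorter and equally rigorous.
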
 
\begin{proof}
Let  $u\in C^\circ$ and $v_{\epsilon,u}\in C^\circ$ be as in Theorem \ref{thm:4.1.2}. 
As $C$ is a normal cone, we know that the norms $\|\cdot\|$ and $\|\cdot\|_u$ are equivalent, and hence there exists a constant $M>0$ such that $\|v_{\epsilon,u}\|_u\leq M$ for all $0<\epsilon\leq 1$. If $x\in C^\circ$, then $u\leq_C \beta x$ for some $\beta>0$. So, if we let  $z:=\beta M x$, then $v_{\epsilon,u}\leq_C z$ for all $0<\epsilon\leq 1$. 

We will show that $\mathcal{O}(z; f)$ does not accumulate at $0$. 
As $f\colon C^\circ\to C^\circ$, we know that for each $n\geq 1$ there exists $\beta_n>0$  (only depending on $f^n(z)\in C^\circ$) such that $|f^{n-1}(z)|u\leq_C \beta_n f^n(z)$. Thus,
\[
f_{\epsilon,u}(f^{n-1}(z))\leq_C (1+\epsilon\beta_n)f^n(z)
\]
for all $n\geq 1$. Now fix $k\geq 1$ and note that,  as $v_{\epsilon,u}\leq_C z$,
\[v_{\epsilon,u}  \leq_C f_{\epsilon,u}^{k-1}(f_{\epsilon,u}(z)) 
\leq_C  (1+\epsilon\beta_1)f_{\epsilon,u}^{k-1}(f(z))
   \leq_C \ldots\leq_C \prod_{i=1}^k (1+\epsilon\beta_i)f^k(z).
\]
It follows that $1\leq \prod_{i=1}^k (1+\epsilon\beta_i)|f^k(z)|$. So, if we let $\epsilon\to 0$,  we see that $1\leq |f^k(z)|$. Thus, $\mathcal{O}(z;f)$ does not accumulate at $0$. 
As $f$ is homogeneous, it follows that no orbit inside $C^\circ$ can accumulate at $0$. 
\end{proof}

Later, in Theorem \ref{thm:wolff}, we shall need to assume that the set $\{v_{\epsilon,u}\colon 0<\epsilon\leq 1\}$ in Theorem \ref{thm:4.1.2} has a convergent subsequence in the norm topology, which is always the case in finite dimensional spaces, but not in infinite dimensions. For this reason we introduce the following terminology. 
\begin{definition}
Let $C$ be normal closed cone  with nonempty interior in a Banach space $X$. If $f\colon C^\circ\to C^\circ$ is an order-preserving homogeneous mapping and the set  $\{v_{\epsilon,u}\colon 0<\epsilon\leq 1\}$ in Theorem \ref{thm:4.1.2} has a convergent subsequence in the norm topology, then we say that {\em $f$ has converging approximate eigenvectors}.   
\end{definition}
In the next subsection we shall  establish several sufficient conditions for a mapping to have converging approximate eigenvectors using so called generalised measures of non-compactness or simply generalised MNC's. 

\subsection{Generalised measures of non-compactness}
Let $X$ be a (real or complex) Banach space and let $\mathcal{B}(X)$ denote the collection of all bounded, non-empty, subsets of $X$. Given $S,T\in\mathcal{B}(X)$ we let $\mathrm{co}(S)$ denote the convex hull of $S$, $S+T:=\{s+t\colon s\in S\mbox{ and } t\in T\}$, and $\lambda S:=\{\lambda s\colon s\in S\}$ for all $\lambda $ in the scalar field. 
Following the terminology from \cite{MN} we call a mapping $\beta \colon \mathcal{B}(X)\to [0,\infty)$ a {\em generalised homogeneous measure of non-compactness (MNC)} if it satisfies the following conditions: 
\begin{enumerate} 
\item [A1.] For all $S\in \mathcal{B}(X)$, $\beta(S) =0$ if and only if $\overline{S}$ is compact. 
\item[A2.] For all $S\in \mathcal{B}(X)$, with $S\subseteq T$, we have that $\beta(S)\leq \beta(T)$. 
\item[A3.] For all $S\in\mathcal{B}(X)$ and $x_0\in X$ we have that $\beta(S\cup\{x_0\})=\beta(S)$. 
\item [A4.] For all $S\in \mathcal{B}(X)$ we have that $\beta(\overline{S})=\beta(S)$. 
\item [A5.] For all $S\in\mathcal{B}(X)$ we have that $\beta(\mathrm{co}(S))=\beta(S)$. 
\item [A6.] For all $S,T\in\mathcal{B}(X)$ we have that $\beta(S+T)\leq \beta(S)+\beta(T)$. 
\item [A7.] For all $S\in\mathcal{B}(X)$ and all scalars $\lambda$ we have that $\beta(\lambda S)=|\lambda|\beta (S)$. 
\end{enumerate}
Property (A7) is called the {\em homogeneity property} of $\beta$. Some treatments of MNC's assume that $\beta$ satisfies the so-called {\em set additive property}:
\begin{enumerate}
\item[A8.] For all $S,T\in\mathcal{B}(X)$ we have that $\beta(S\cup T)=\max\{\beta(S),\beta(T)\}$. 
\end{enumerate}
However, we shall not assume (A8). 

A fundamental example is the {\em Kuratowski measure of non-compactness}, 
\[
\alpha(S) :=\inf\left \{\delta>0\colon S=\bigcup_{i=1}^n S_i\mbox{ with $\mathrm{diam}(S_i)\leq \delta$ for all  $1\leq i\leq n<\infty$}\right\}
\]
for $S\in\mathcal{B}(X)$. The Kuratowski MNC satisfies properties (A1)--(A8). Notice that (A1) and (A8) imply (A2) and (A3), but there are many interesting examples of generalised homogeneous MNC's that do not satisfy (A8). 

Using the generalised homogeneous MNC's we can formulate a condition under which the set $\{v_{\epsilon,u}\colon 0<\epsilon\leq 1\}$ in Theorem \ref{thm:4.1.2} has a compact norm closure. 
\begin{theorem}\label{thm:4.2.1} Let $f\colon C\cap K_u\to C\cap K_u$ be a homogeneous mapping which is order-preserving with respect to $K$ and $u$-bounded.
Let  $\{v_{\epsilon,u}\colon 0<\epsilon\leq 1\}$  be as in Theorem \ref{thm:4.1.2}. If $r_{C\cap K_u}(f)>0$ and  there exists a generalised homogeneous MNC $\beta$ such that for each $A\in\mathcal{B}(X)$ with $A\subseteq C\cap K_u$ and $\beta (A)>0$ we have that 
\[
\beta(f(A))< r_{C\cap K_u}(f)\beta(A),
\]
then $\{v_{\epsilon,u}\colon 0<\epsilon\leq 1\}$  has a compact closure in the norm topology. 
\end{theorem}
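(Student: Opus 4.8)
The plan is to show that $\beta(V)=0$, where $V:=\{v_{\epsilon,u}\colon 0<\epsilon\le 1\}$, since then $\overline V$ is compact by axiom (A1). Write $r:=r_{C\cap K_u}(f)>0$ and abbreviate $v_\epsilon:=v_{\epsilon,u}$, $r_\epsilon:=r_{\epsilon,u}$, $f_\epsilon:=f_{\epsilon,u}$. The starting point is the eigenvalue relation from Theorem \ref{thm:4.1.2}: since $f_\epsilon(v_\epsilon)=r_\epsilon v_\epsilon$ and $|v_\epsilon|=1$, we have
\[
r_\epsilon v_\epsilon = f(v_\epsilon)+\epsilon u, \qquad\text{equivalently}\qquad v_\epsilon = \frac{1}{r_\epsilon}f(v_\epsilon)+\frac{\epsilon}{r_\epsilon}u.
\]
I also record two facts to be used repeatedly: $r_\epsilon\to r$ as $\epsilon\to 0^+$ (Theorem \ref{thm:4.1.2}(iii)), and the families $\{v_\epsilon\}$ and $\{f(v_\epsilon)\}$ are norm bounded. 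The former holds because $|v_\epsilon|=1$ and $|\cdot|$ is equivalent to $\|\cdot\|$; the latter because $u$-boundedness together with the normality of $K$ gives $0\le_K f(v_\epsilon)\le_K cu$ for a fixed constant $c$.

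Rather than estimate $\beta(V)$ directly, I would first reduce to sequences of parameters tending to $0$. Because $\epsilon\mapsto v_\epsilon$ is norm continuous on $(0,\infty)$ by Theorem \ref{thm:4.1.2}(ii), the set $\{v_\epsilon\colon \delta\le\epsilon\le 1\}$ is compact for each $\delta>0$. Hence, given any sequence $(v_{\epsilon_n})$ in $V$, after passing to a subsequence we may assume $\epsilon_n\to\epsilon^*\in[0,1]$; if $\epsilon^*>0$ continuity already yields a convergent subsequence, so the only regime requiring work is $\epsilon_n\to 0^+$. It therefore suffices to prove that $A:=\{v_{\epsilon_n}\colon n\ge 1\}$ has compact closure whenever $\epsilon_n\to 0^+$, since this shows every sequence in $V$ has a convergent subsequence and hence that $\overline V$ is compact.

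The heart of the matter is then the estimate $\beta(A)\le r^{-1}\beta(f(A))$ for such an $A$. Using the eigenvalue relation I would split
\[
v_{\epsilon_n} = \frac{1}{r}f(v_{\epsilon_n}) + \Bigl(\frac{1}{r_{\epsilon_n}}-\frac{1}{r}\Bigr)f(v_{\epsilon_n}) + \frac{\epsilon_n}{r_{\epsilon_n}}u,
\]
so that $A$ is contained in the Minkowski sum $\tfrac1r f(A)+S_3+S_4$, where $S_3:=\{(r_{\epsilon_n}^{-1}-r^{-1})f(v_{\epsilon_n})\colon n\ge 1\}$ and $S_4:=\{\epsilon_n r_{\epsilon_n}^{-1}u\colon n\ge 1\}$. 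Since $r_{\epsilon_n}\to r>0$ and $\epsilon_n\to 0$, and $\{f(v_{\epsilon_n})\}$ is norm bounded, both $S_3$ and $S_4$ consist of points converging to $0$ in norm; they are therefore relatively compact and satisfy $\beta(S_3)=\beta(S_4)=0$ by (A1). Applying subadditivity (A6) and homogeneity (A7) then gives $\beta(A)\le \tfrac1r\beta(f(A))$, that is $r\beta(A)\le\beta(f(A))$. If $\beta(A)>0$, this contradicts the hypothesis $\beta(f(A))<r\beta(A)$, forcing $\beta(A)=0$, as required.

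The main obstacle, and the reason the reduction to $\epsilon_n\to 0$ is essential, is that the eigenvalues $r_\epsilon$ vary with $\epsilon$: the clean near-contraction $A\subseteq \tfrac1r f(A)+(\text{relatively compact})$ is available only once the error sets $S_3,S_4$ are genuinely relatively compact, which requires $r_{\epsilon_n}^{-1}-r^{-1}\to 0$ and $\epsilon_n/r_{\epsilon_n}\to 0$. For parameters bounded away from $0$ these error terms need not be small and cannot be controlled by a single measure of non-compactness, so Theorem \ref{thm:4.1.2}(ii) is precisely what disposes of that regime. A secondary point to watch is that only the \emph{strict} inequality $\beta(f(A))<r\beta(A)$ is assumed, with no uniform contraction constant; the argument must therefore be arranged so that $\beta(A)>0$ produces a direct contradiction, rather than relying on any iterative decay.
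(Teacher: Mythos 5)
Your proof is correct, and its mathematical core coincides with the paper's: rewrite the eigenvector identity as $v_\epsilon = \tfrac{1}{r}f(v_\epsilon) + (\text{error})$, show the error set is $\beta$-null, and combine (A2), (A6), (A7) with the strict inequality hypothesis to force $\beta=0$; in fact your error term $\bigl(\tfrac{1}{r_{\epsilon}}-\tfrac{1}{r}\bigr)f(v_{\epsilon}) + \tfrac{\epsilon}{r_{\epsilon}}u$ is algebraically identical to the paper's $\tfrac{\epsilon}{r}u + \bigl(1-\tfrac{r_\epsilon}{r}\bigr)v_{\epsilon}$, just written against $f(v_\epsilon)$ instead of $v_\epsilon$. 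The one structural difference is where you invoke the continuity statement of Theorem \ref{thm:4.1.2}(ii): you use it to make a preliminary sequential reduction to parameter sequences $\epsilon_n \to 0^+$, so that your error sets $S_3, S_4$ are null sequences, whereas the paper runs the MNC argument on the whole family $S=\{v_{\epsilon,u}\colon 0<\epsilon\leq 1\}$ at once, observing that the map $\sigma(\epsilon) := \tfrac{\epsilon}{r}u + \bigl(1-\tfrac{r_\epsilon}{r}\bigr)v_{\epsilon,u}$ extends continuously to $[0,1]$ with $\sigma(0)=0$ (using Corollary \ref{cor:4.1.3}), so that the full error set $T=\sigma((0,1])$ is relatively compact and $\beta(T)=0$. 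This makes your reduction harmless but unnecessary, and your closing claim --- that the reduction is \emph{essential} because for $\epsilon$ bounded away from $0$ the error terms ``need not be small and cannot be controlled by a single measure of non-compactness'' --- is mistaken: smallness is irrelevant, since (A1) only asks for relative compactness of the error set, and the error set over the entire parameter range has it, being the continuous image of a compact interval. Both routes deliver the same theorem with the same ingredients; the paper's global version is simply one step shorter.
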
 
\begin{proof}
For simplicity write $S:=\{v_{\epsilon,u}\colon 0<\epsilon\leq 1\}$ and $r:=r_{C\cap K_u}(f)>0$. 
It suffices to show that $\beta(S) =0$ by (A1). Define $g(x):=\frac{1}{r}f(x)$ for all $x\in C\cap K_u$. So, $\beta (g(A))<\beta(A)$ for all $A\in\mathcal{B}(X)$ with $A\subseteq C\cap K_u$ and $\beta(A)>0$ by (A7).  

As $|v_{\epsilon,u}|=1$ and $f_{\epsilon,u}(v_{\epsilon,u} )= f(v_{\epsilon,u}) +\epsilon u = r_\epsilon v_{\epsilon,u}$, where $r_{\epsilon}:= r_{C\cap K_u}(f_{\epsilon,u})$, we get that 
\[
g(v_{\epsilon,u}) +\frac{\epsilon}{r} u +(1-\frac{r_\epsilon}{r})v_{\epsilon,u}= v_{\epsilon,u}.
\]
Define $T:=\{\frac{\epsilon}{r}u +(1-\frac{r_\epsilon}{r})v_{\epsilon,u}\colon 0<\epsilon\leq 1\}$.
Note that Corollary \ref{cor:4.1.3} implies that $\lim_{\epsilon\to 0^+} \frac{r_\epsilon}{r} =1$ and hence 
\[
\lim_{\epsilon\to 0^+} \frac{\epsilon}{r} u + (1-\frac{r_\epsilon}{r})v_{\epsilon,u}=0.
\]

Thus, the mapping $\sigma\colon \epsilon \mapsto  \frac{\epsilon}{r} u + (1-\frac{r_\epsilon}{r})v_{\epsilon,u}$ is a norm continuous mapping on $[0,1]$ by Theorem \ref{thm:4.1.2}. This implies that $\overline{T}$ is compact, so that $\beta(T)=0$. Since $ S\subseteq g(S)+T$, we conclude from (A2) and (A6) that 
\[
\beta(S)\leq \beta(g(S) +T) = \beta(g(S))+\beta(T) =\beta(g(S))
\]
so that $\beta(S)=0$.
\end{proof}
Notice that we have only used properties (A1), (A2), (A6) and (A7) of $\beta$ in the proof of Theorem \ref{thm:4.2.1}.  Another sufficient condition is given in the following result. 
\begin{theorem}\label{thm:4.2.2} Let $f\colon C\cap K_u\to C\cap K_u$ be a homogeneous mapping which is order-preserving with respect to $K$,  $u$-bounded, and satisfies $r_{C\cap K_u}(f)=1$.
Let  $\{v_{\epsilon,u}\colon 0<\epsilon\leq 1\}$ be as in Theorem \ref{thm:4.1.2}. If   there exists a generalised homogeneous MNC $\beta$ such that 
\[
\liminf_{m\to\infty} \beta(f^m(V)) =0,
\]
where $V:=\{x\in C\cap K_u\colon |x|\leq 1\}$, and $f$ is uniformly continuous on $V$ in the norm topology, then $\{v_{\epsilon,u}\colon 0<\epsilon\leq 1\}$  has a compact closure in the norm topology. 
\end{theorem}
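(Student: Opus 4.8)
The goal is to show that $S:=\{v_{\epsilon,u}\colon 0<\epsilon\leq 1\}$ has compact closure, which by (A1) is the same as $\beta(S)=0$. Since $|v_{\epsilon,u}|=1$, we have $S\subseteq V$, so $f^m(S)\subseteq f^m(V)$ and hence $\beta(f^m(S))\leq\beta(f^m(V))$ by (A2); here $f^m(V)$ is bounded (so $\beta(f^m(V))$ is defined) because $f$ is $u$-bounded. The plan is to prove the single inequality $\beta(S)\leq\beta(f^m(S))$ for every $m\geq 1$. Combined with it, the hypothesis $\liminf_{m\to\infty}\beta(f^m(V))=0$ yields $\beta(S)\leq\beta(f^m(V))$ for all $m$ and therefore $\beta(S)=0$, as required.

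To get $\beta(S)\leq\beta(f^m(S))$ I would, for each fixed $m$, represent every $v_{\epsilon,u}$ as $f^m(v_{\epsilon,u})$ plus a norm-negligible correction. Write $g_\epsilon:=f_{\epsilon,u}$ and $r_\epsilon:=r_{C\cap K_u}(f_{\epsilon,u})$, so that by Theorem~\ref{thm:4.1.2} we have $g_\epsilon(v_{\epsilon,u})=r_\epsilon v_{\epsilon,u}$, whence $g_\epsilon^m(v_{\epsilon,u})=r_\epsilon^m v_{\epsilon,u}$, and by Corollary~\ref{cor:4.1.3} $r_\epsilon\to 1$ as $\epsilon\to 0^+$. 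The crucial claim is that for each fixed $m\geq 1$,
\[
\|f^m(v_{\epsilon,u})-v_{\epsilon,u}\|\longrightarrow 0\qquad\text{as }\epsilon\to 0^+.
\]
Granting this, set $\tau_m(\epsilon):=v_{\epsilon,u}-f^m(v_{\epsilon,u})$. As $\epsilon\mapsto v_{\epsilon,u}$ is norm-continuous on $(0,1]$ (Theorem~\ref{thm:4.1.2}(ii)) and $f^m$ is continuous, $\tau_m$ is continuous on $(0,1]$ and extends continuously to $[0,1]$ by $\tau_m(0):=0$; thus its image $T_m$ has compact closure and $\beta(T_m)=0$ by (A1). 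Since $v_{\epsilon,u}=f^m(v_{\epsilon,u})+\tau_m(\epsilon)$, we have $S\subseteq f^m(S)+T_m$, and (A2), (A6) give $\beta(S)\leq\beta(f^m(S))+\beta(T_m)=\beta(f^m(S))\leq\beta(f^m(V))$.

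It remains to prove the displayed claim, where uniform continuity of $f$ (the only control on its nonlinearity) and $r_\epsilon\to 1$ enter. Because $g_\epsilon^m(v_{\epsilon,u})=r_\epsilon^m v_{\epsilon,u}$ with $r_\epsilon\to 1$, it suffices to show $\|f^m(v_{\epsilon,u})-g_\epsilon^m(v_{\epsilon,u})\|\to 0$, which I would establish by induction on $m$. For $m=1$, $g_\epsilon(v_{\epsilon,u})-f(v_{\epsilon,u})=\epsilon u\to 0$. For the inductive step, using $g_\epsilon(x)=f(x)+\epsilon|x|u$,
\[
g_\epsilon^m(v_{\epsilon,u})-f^m(v_{\epsilon,u})=\bigl(f(g_\epsilon^{m-1}(v_{\epsilon,u}))-f(f^{m-1}(v_{\epsilon,u}))\bigr)+\epsilon\,|g_\epsilon^{m-1}(v_{\epsilon,u})|\,u.
\]
The last term equals $\epsilon\,r_\epsilon^{m-1}u$, which tends to $0$ since $r_\epsilon$ is bounded. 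For the bracketed term, the arguments $g_\epsilon^{m-1}(v_{\epsilon,u})=r_\epsilon^{m-1}v_{\epsilon,u}$ and $f^{m-1}(v_{\epsilon,u})$ differ by a norm-null amount (induction hypothesis) and both lie in $\{x\in C\cap K_u\colon |x|\leq 2\}$ once $\epsilon$ is small (as $r_\epsilon^{m-1}\to 1$). Homogeneity of $f$ upgrades uniform continuity on $V$ to uniform continuity on $\{x\in C\cap K_u\colon|x|\leq R\}$ for every $R>0$ (if $|a|,|b|\leq R$ then $f(a)-f(b)=R(f(a/R)-f(b/R))$ with $a/R,b/R\in V$), so uniform continuity forces the bracketed term to $0$, completing the induction.

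The main obstacle is exactly this last step: since $f$ is only homogeneous and not linear, one cannot expand $g_\epsilon^m$ algebraically, and the iterates $r_\epsilon^{m-1}v_{\epsilon,u}$ need not remain in $V$. The remedy is to invoke uniform continuity together with the homogeneity rescaling, which keeps the arguments inside a ball on which the modulus of continuity is controlled, while the perturbation $\epsilon|x|u$ stays negligible; everything else is bookkeeping with the axioms (A1), (A2) and (A6).
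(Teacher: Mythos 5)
Your proof is correct, and its core is the same as the paper's: the key claim that for each fixed $m$, $|f^m(v_{\epsilon,u})-v_{\epsilon,u}|\to 0$ as $\epsilon\to 0^+$, established by induction on $m$ from $r_{\epsilon,u}\to 1$ (Corollary \ref{cor:4.1.3}) together with uniform continuity of $f$ on $V$, upgraded by homogeneity to the ball $\{x\in C\cap K_u\colon |x|\leq 2\}$; this is precisely the paper's inequality (\ref{eq:4.2.2.1}), and whether one runs the induction on $|f^m(v_{\epsilon,u})-v_{\epsilon,u}|$ (as the paper does, via the triangle inequality through $f(v_{\epsilon,u})$) or on $|f^m(v_{\epsilon,u})-f_{\epsilon,u}^m(v_{\epsilon,u})|$ (as you do) is cosmetic. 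Where you genuinely differ is the concluding measure-of-noncompactness bookkeeping, and your version is cleaner. The paper first truncates to $\{v_{\epsilon,u}\colon 0<\epsilon\leq\epsilon_0\}$, handling $\{v_{\epsilon,u}\colon \epsilon_0\leq\epsilon\leq 1\}$ by compactness via Theorem \ref{thm:4.1.2}(ii), and then, for each $\eta>0$, covers the truncated set by $\Gamma_{\epsilon_0}+\{x\colon |x|\leq \eta/(2\beta(B_1(0)))\}$ with $\epsilon_0$ depending on $\eta$, invoking (A7); turning this family of estimates into $\beta(\{v_{\epsilon,u}\colon 0<\epsilon\leq 1\})=0$ requires a small gluing step (e.g.\ via (A3) and (A6)) that the paper leaves implicit, since (A8) is not assumed. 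Your decomposition $S\subseteq f^m(S)+T_m$, with $T_m$ precompact because $\tau_m$ extends continuously to $[0,1]$, is exactly the device the paper uses to prove Theorem \ref{thm:4.2.1} (with $f^m$ here in place of $g$ there); it treats all of $S$ at once, produces the single inequality $\beta(S)\leq\beta(f^m(V))$ for every $m$, uses only (A1), (A2), (A6), and dispenses with the truncation and gluing entirely. The only extra inputs you need --- continuity of $\epsilon\mapsto v_{\epsilon,u}$ and of $f^m$ --- are available from Theorem \ref{thm:4.1.2}(ii) and from uniform continuity plus homogeneity, so nothing is lost; in effect your argument unifies Theorems \ref{thm:4.2.1} and \ref{thm:4.2.2} under one decomposition scheme.
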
 
\begin{proof}
Note that by Theorem \ref{thm:4.1.2}(ii) it suffices to prove that $\beta(\{v_{\epsilon,u}\colon 0<\epsilon\leq\epsilon_0\leq 1\})=0$, where $\epsilon_0>0$ can be arbitrary small. Now let $\eta>0$ be given. 

We first show that for each $m\geq 1$ and $\sigma>0$ there exists $\epsilon_0:=\epsilon_0(\sigma,m)>0$  such that 
\begin{equation}\label{eq:4.2.2.1}
|f^m(v_{\epsilon,u}) - v_{\epsilon,u}|\leq \sigma\mbox{\quad for all }0<\epsilon\leq \epsilon_0.
\end{equation}
For $m=1$ the assertion follows from the fact that 
\[
|f(v_{\epsilon,u}) - v_{\epsilon,u}|\leq |f_{\epsilon,u}(v_{\epsilon,u}) -\epsilon u - v_{\epsilon,u}|\leq | r_{C\cap K_u}(f_{\epsilon,u})v_{\epsilon,u} -\epsilon u - v_{\epsilon,u}|\to 0,
\]
as $\epsilon\to 0^+$, since $r_{C\cap K_u}(f_{\epsilon,u})\to r_{C\cap K_u}(f)=1$ by Corollary \ref{cor:4.1.3}. 

Now suppose the assertion holds for all $1\leq j<m$. As $f$ is uniformly continuous on $V$, we know that there exists $\delta>0$ such that 
\[
|f(x)-f(y)|\leq \sigma/4\mbox{\quad for all }x,y\in V\mbox{ with } |x-y|\leq \delta.
\]
As $f$ is homogenous, it follows that $|f(x)-f(y)|\leq \sigma/2$ for all $x,y\in C\cap K_u$ with $|x|,|y|\leq 2$ and  $|x-y|\leq 2\delta$. 

As $|v_{\epsilon,u}|=1$ for all $0<\epsilon\leq 1$, we can use the induction hypothesis  to find 
$\epsilon_0>0$ such that $|f^{m-1}(v_{\epsilon,u})-v_{\epsilon,u}|\leq 2\delta$ and $|f^{m-1}(v_{\epsilon,u})|\leq 2$ for all $0<\epsilon\leq\epsilon_0$. Using uniform continuity of $f$ we deduce that 
\[
|f^{m}(v_{\epsilon,u})-f(v_{\epsilon,u})| = |f(f^{m-1}(v_{\epsilon,u}))-f(v_{\epsilon,u})|\leq \sigma/2
\]
for all  $0<\epsilon\leq\epsilon_0$. Applying the induction hypothesis again, and possibly decreasing $\epsilon_0>0$, we may also assume that 
\[
|f(v_{\epsilon,u})-v_{\epsilon,u}|\leq \sigma/2
\]
for all $0<\epsilon\leq\epsilon_0$. Combining these inequalities gives 
\[
|f^m(v_{\epsilon,u}) -v_{\epsilon,u}|\leq |f(f^{m-1}(v_{\epsilon,u}) )- f(v_{\epsilon,u})|+|f(v_{\epsilon,u}) -v_{\epsilon,u}|\leq \sigma/2+\sigma/2\leq \sigma
\] 
for all  $0<\epsilon\leq\epsilon_0$. 

As $\liminf_{m\to\infty}\beta(f^m(V)) =0$, there exists $m_0\geq 1$ such that $\beta(f^{m_0}(V))\leq \eta/2$. Define $\Gamma_{\epsilon_0}:=\{f^{m_0}(v_{\epsilon,u})\colon 0<\epsilon\leq\epsilon_0\}$. Taking $\sigma= \frac{\eta}{2\beta(B_1(0))}$ in  (\ref{eq:4.2.2.1}), we find an $\epsilon_0>0$ such that 
\[
\{v_{\epsilon,u}\colon 0<\epsilon\leq\epsilon_0\}\subseteq \Gamma_{\epsilon_0}+ \left \{ x\in C\cap K_u\colon |x|\leq \frac{\eta}{2\beta(B_1(0))}\right \},
\]
where $B_1(0):=\{x\in X\colon |x|\leq 1\}$. 

This implies that 
\[
\beta(\{v_{\epsilon,u}\colon 0<\epsilon\leq\epsilon_0\})\leq \beta(\Gamma_{\epsilon_0})+ 
\frac{\eta}{2\beta(B_1(0))}\beta(B_1(0))\leq \eta/2+\eta/2=\eta,
\]
as $\Gamma_{\epsilon_0}\subseteq f^{m_0}(V)$. Thus, $\beta(\{v_{\epsilon,u}\colon 0<\epsilon\leq 1\})=0$, which completes the proof. 
\end{proof}

\begin{remark}
If $X$ is a Banach space, $\beta$ is a generalised  homogeneous MNC on $X$ and 
$g\colon X\to X$ is a bounded linear map, one can define $\beta(g)$ as in Theorem \ref{thm:4.2.1}, i.e., $\beta(g):=\inf \{c>0\colon \beta(g(A))\leq c \beta(A)\mbox{ for all bounded subsets $A$ of $X$}\}$. However, as follows from \cite[Theorem 8]{MN2}, it may happen that $\beta(g^m)=\infty$ for infinitely many positive integers $m$.
\end{remark}

\section{Horofunctions of Hilbert's metric } 
The horofunction boundary, which goes back to Gromov \cite{Gr}, is   known to be a useful tool to prove Denjoy-Wolff type theorems for fixed point free nonexpansive mappings on a variety of metric spaces; see \cite{GV,Ka,Li1,NTMNA}. We shall also exploit horofunctions here. In fact, we shall follow Walsh \cite{Wa}, who made  detailed study of the horofunction boundary of finite dimensional Hilbert's metric spaces, and use the so called Funk and reverse Funk (weak) metrics. 

Let $C$ be a closed cone with nonempty interior in a Banach space $X$. For $x,y\in C^\circ$ the {\em Funk (weak) metric} is given by 
\begin{equation}\label{eq:F}
\F_C(x,y) :=\log M(x/y).
\end{equation}
Likewise, for $x,y\in C^\circ$ the {\em reverse Funk (weak) metric} is given by
\begin{equation}\label{eq:RF}
\RF_C(x,y) :=\log M(y/x).
\end{equation}
Using this notation we see that Hilbert's (projective) metric satisfies 
\begin{equation}\label{eq:F+RF}
\delta_C(x,y) = \F_C(x,y)+\RF_C(x,y)
\end{equation}
and Thompson's metric satisfies 
\begin{equation}\label{eq:FvRF}
d_C(x,y) = \max\{\F_C(x,y),\F_C(y,x)\}
\end{equation}
for all $x,y\in C^\circ$.  

The reader can check that both the Funk metric and reverse Funk metric satisfy the triangle inequality on $C^\circ\times C^\circ$, but are clearly neither symmetric nor nonnegative functions. They are named after P. Funk who studied them in \cite{Funk} in connection with Hilbert's fourth problem; see  \cite{PaTr} for more details.

We have the following lemma. 
\begin{lemma}\label{lem:5.1} 
Let $C$ be a closed cone with nonempty interior in a Banach space $X$. For each $y\in C^\circ$ the functions $x\mapsto \F_C(x,y)$ and $x\mapsto \RF_C(x,y)$  are Lipschitz with constant 1 with respect to $d_C$ on $C^\circ$. 
\end{lemma}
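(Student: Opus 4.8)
The plan is to deduce the Lipschitz estimate directly from the triangle inequality for the Funk metric, combined with the representation $d_C(x,x')=\max\{\F_C(x,x'),\F_C(x',x)\}$ from (\ref{eq:FvRF}). The only ingredient needed beyond the definitions is the submultiplicativity of $M$, namely $M(x/y)\le M(x/z)M(z/y)$ for all $x,y,z\in C^\circ$; this is immediate, since $x\le_C\beta z$ and $z\le_C\gamma y$ force $x\le_C\beta\gamma y$, so taking infima over admissible $\beta,\gamma$ gives the bound. Passing to logarithms, this is exactly the triangle inequality $\F_C(x,y)\le\F_C(x,z)+\F_C(z,y)$ noted just before the lemma, and the analogous statement holds for $\RF_C$.

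For the Funk function $x\mapsto\F_C(x,y)$, I would fix $x,x'\in C^\circ$ and apply the triangle inequality with $x'$ as the intermediate point in both orders:
\[
\F_C(x,y)\le\F_C(x,x')+\F_C(x',y),\qquad \F_C(x',y)\le\F_C(x',x)+\F_C(x,y).
\]
Rearranging yields the two one-sided bounds $\F_C(x,y)-\F_C(x',y)\le\F_C(x,x')$ and $\F_C(x',y)-\F_C(x,y)\le\F_C(x',x)$. Hence
\[
|\F_C(x,y)-\F_C(x',y)|\le\max\{\F_C(x,x'),\F_C(x',x)\}=d_C(x,x'),
\]
which is the desired constant-one estimate with respect to $d_C$.

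For the reverse Funk function I would use $\RF_C(x,y)=\F_C(y,x)$ and again fix $x,x'$, now applying the triangle inequality at the point $y$ while varying the first slot:
\[
\F_C(y,x)\le\F_C(y,x')+\F_C(x',x),\qquad \F_C(y,x')\le\F_C(y,x)+\F_C(x,x').
\]
The same rearrangement gives $|\RF_C(x,y)-\RF_C(x',y)|\le\max\{\F_C(x,x'),\F_C(x',x)\}=d_C(x,x')$.

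There is no genuine analytic obstacle here; the one point that must be handled with care is the bookkeeping of the directions of the triangle inequality, so that the two one-sided differences are controlled by $\F_C(x,x')$ and $\F_C(x',x)$ respectively, and only together---through the $\max$ in (\ref{eq:FvRF})---do they assemble into the symmetric Thompson distance $d_C(x,x')$. It is worth emphasizing that neither $\F_C$ nor $\RF_C$ is Lipschitz with constant one with respect to itself, precisely because these functions are not symmetric, so it is essential that the majorant be the symmetrized quantity $d_C$.
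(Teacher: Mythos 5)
Your proposal is correct and follows essentially the same route as the paper: both establish the triangle inequality for the Funk weak metric via submultiplicativity of $M$ (i.e.\ $M(x/y)\leq M(x/z)M(z/y)$), apply it with the roles of the two points interchanged, and combine the resulting one-sided bounds through $d_C(x,x')=\max\{\F_C(x,x'),\F_C(x',x)\}$. The only cosmetic difference is that the paper treats the reverse-Funk case with ``the argument is similar,'' whereas you write it out explicitly via $\RF_C(x,y)=\F_C(y,x)$.
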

\begin{proof}
For $x_1,x_2\in C^\circ$ we have that $x_1\leq_C M(x_1/x_2)x_2\leq_C M(x_1/x_2)M(x_2/y)y$, so that $M(x_1/y)\leq M(x_1/x_2)M(x_2/y)$. This implies that 
$
\F_C(x_1,y)\leq \F_C(x_1,x_2)+\F_C(x_2,y)$. 
Interchanging the roles of $x_1$ and $x_2$ gives 
$\F_C(x_2,y)\leq \F_C(x_2,x_1)+\F_C(x_1,y)$, 
so that 
$
|\F_C(x_1,y)-\F_C(x_2,y)|\leq d_C(x_1,x_2)$.
The argument for $\RF_C$ goes in a similar fashion.
\end{proof}
It follows from Lemma \ref{lem:5.1} and (\ref{eq:F+RF}) that for each $y\in C^\circ$, the function $x\mapsto \delta_C(x,y)$ is Lipschitz with constant 2 with respect to $d_C$ on $C^\circ$. 

The following lemma lists some basic properties of $\F_C$ that are immediate from the definition. 
\begin{lemma}\label{lem:5.2} 
Let $C$ be a closed cone with nonempty interior in a Banach space $X$. Then $\F_C$ has the following properties:
\begin{enumerate}
\item For $x,y \in C^\circ$, and  $\alpha, \beta > 0$ we have that 
\[ \F_C(\alpha x, \beta y) = \F_C(x,y) + \log \alpha - \log \beta.\]
\item If $x_1, x_2 \in C^\circ$ with $x_1 \leq_C x_2$, and $y \in C^\circ$, then 
\[\F_C(x_1, y) \leq_C \F_C(x_2, y).\] 
\item If $y_1, y_2 \in C^\circ$ with $y_1 \leq_C y_2$, and $x \in C^\circ$, then 
\[\F_C(x,y_2) \leq_C \F_C(x,y_1).\] 
\item If  $f\colon C^\circ\to C^\circ$ is an order-preserving homogeneous mapping, then 
\[ \F_C(f(x),f(y)) \leq \F_C(x,y)\mbox{\quad for all }x,y\in C^\circ.  \]
\end{enumerate}
\end{lemma}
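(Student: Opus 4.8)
The plan is to derive all four properties directly from the defining formula $M(x/y) = \inf\{\beta \in \R : x \leq_C \beta y\}$ together with $\F_C(x,y) = \log M(x/y)$, so that each claim about $\F_C$ reduces, after taking logarithms, to an elementary statement about how the set $S(x,y) := \{\beta > 0 : x \leq_C \beta y\}$ and its infimum behave. First I would record the standing observation that for $x,y\in C^\circ$ the admissible constants are positive and the infimum is attained because $C$ is closed: if $x \leq_C \beta_n y$ and $\beta_n \to M(x/y)$, then $\beta_n y - x \in C$ and closedness gives $M(x/y)y - x \in C$, i.e.\ $x \leq_C M(x/y)\,y$. This attainment is convenient for (4), though a limiting argument over $\beta > M(x/y)$ would also suffice.

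For (1), I would compute $M(\alpha x/\beta y)$ by the substitution $\gamma = (\alpha/\beta)\gamma'$: the condition $\alpha x \leq_C \gamma\beta y$ is equivalent to $x \leq_C (\gamma\beta/\alpha)y$, so $M(\alpha x/\beta y) = (\alpha/\beta)M(x/y)$, and taking logarithms yields $\F_C(\alpha x,\beta y) = \F_C(x,y) + \log\alpha - \log\beta$.

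For (2) and (3), I would argue by inclusion of the sets $S(\cdot,\cdot)$. If $x_1 \leq_C x_2$, then any $\beta$ with $x_2 \leq_C \beta y$ also satisfies $x_1 \leq_C \beta y$ by transitivity, so $S(x_2,y) \subseteq S(x_1,y)$ and hence $M(x_1/y)\leq M(x_2/y)$, giving (2). For (3), if $y_1\leq_C y_2$ and $x \leq_C \beta y_1$ with $\beta>0$, then $\beta y_1 \leq_C \beta y_2$ gives $x \leq_C \beta y_2$, so $S(x,y_1)\subseteq S(x,y_2)$ and $M(x/y_2)\leq M(x/y_1)$. (Here the comparison is between real scalars, so the $\leq_C$ in the statement is just the usual order on $\R$.)

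Property (4) is the one genuine point, and I would prove it as follows. Put $\beta := M(x/y)$, so that $x \leq_C \beta y$ by the attainment recorded above. Order-preservation of $f$ gives $f(x) \leq_C f(\beta y)$, while homogeneity gives $f(\beta y) = \beta f(y)$; hence $f(x) \leq_C \beta f(y)$, which says exactly $M(f(x)/f(y)) \leq \beta = M(x/y)$, and taking logarithms yields $\F_C(f(x),f(y)) \leq \F_C(x,y)$. The only thing to be careful about is the interaction of the two hypotheses on $f$: order-preservation is applied to the single inequality $x \leq_C \beta y$, and homogeneity is used to pull the positive scalar $\beta$ through $f$. Thus the main (and only mild) obstacle is ensuring that $\beta = M(x/y)$ is a genuine admissible comparison constant, which is exactly what the closedness of $C$ guarantees.
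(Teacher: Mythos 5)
Your proof is correct and is exactly the argument the paper has in mind: the paper states this lemma without proof, calling the properties ``immediate from the definition,'' and your direct verification from $M(x/y)=\inf\{\beta\in\R\colon x\leq_C\beta y\}$ (including the attainment of the infimum via closedness of $C$, the positivity of admissible $\beta$ for $x,y\in C^\circ$, and the observation that the $\leq_C$ in items (2)--(3) is just the order on $\R$) supplies precisely the omitted details.
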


Following Walsh \cite{Wa} we now define the horofunction boundaries for the Funk metric, RFunk metric, and $\delta_C$. Fix a {\em base point} $b\in C^\circ$ and let $\rho$ be either $\F_C$, $\RF_C$, or, $\delta_C$. Let $\mathcal{C}(C^\circ)$ denote the space of continuous functions from $(C^\circ,d_C)$ into $\mathbb{R}$, equipped with the topology of compact convergence (also called the topology of uniform convergence on compact sets); see \cite[\S 46]{Mun}. Define $i_\rho\colon C^\circ\to \mathcal{C}(C^\circ)$ as follows: For each $y\in C^\circ$ the function $i_\rho(y)\in\mathcal{C}(C^\circ)$ is given by 
\[
i_\rho(y)(x) := \rho(x,y)-\rho(b,y)\mbox{\quad for all }x\in C^\circ.
\]

Note that for  each $x,x'\in C^\circ$ we have that 
\[
|i_\rho(y)(x)-i_\rho(y)(x')|= |\rho(x,y)-\rho(x',y)|\leq 2d_C(x,x')
\]
for all $y\in C^\circ$ by Lemma \ref{lem:5.1}, and hence $i_\rho(C^\circ):=\{i_\rho(y)\colon y\in C^\circ\}$ is an equicontinuous family in $\mathcal{C}(C^\circ)$. 
Furthermore, if $\rho$ is $\F_C$ or $\RF_C$, then  for each $x\in C^\circ$ we have that $
|i_\rho(y)(x)|\leq d_C(x,b)$ for all $y\in C^\circ$ by Lemma \ref{lem:5.1}. Also if $\rho =\delta_C$, then for each $x\in C^\circ$ we have that 
$|i_\rho(y)(x)|\leq 2d_C(x,b)$ for all $y\in C^\circ$. Thus, for each  fixed $x\in C^\circ$ the set $\{i_\rho(y)(x)\colon y\in C^\circ\}$ has compact closure in $\mathbb{R}$. 
It now follows from  Ascoli's Theorem \cite[Theorem 47.1]{Mun} that $i_\rho(C^\circ)$ has compact closure in $\mathcal{C}(C^\circ)$ with respect to the topology of compact convergence. 

The boundary, $\overline{i_\rho(C^\circ)}\setminus i_\rho(C^\circ)$, is called the {\em horofunction boundary} and its elements are called {\em horofunctions}. 
Note that $i_\rho(\alpha y)=i_\rho(y)$ for all $\alpha >0$ and $y\in C^\circ$. Thus, if we let $S:=\{y\in C^\circ\colon \|y\|=1\}$, then $\mathcal{H}_\rho=\overline{i_\rho(S)}\setminus i_\rho(S)$.
For simplicity we shall write $i_F:=i_\rho$ and $\mathcal{H}_F:=\overline{i_F(C^\circ)}\setminus i_F(C^\circ)$ if $\rho =\F_C$. Likewise, we use notation $i_R$ and $\mathcal{H}_R$ for $\rho=\RF_C$, and $i_H$ and $\mathcal{H}_H$ for $\rho=\delta_C$.

On $\overline{i_\rho(C^\circ)}$ the topology of compact convergence  agrees with the topology of pointwise convergence. It also coincides with the compact open topology; see \cite[\S 46]{Mun}. If $C$ is a finite dimensional cone, the metric space $(C^\circ, d_C)$ is $\sigma$-compact, i.e.,  the union of countably many compact sets. In that case the topology of compact convergence on $\mathcal{C}(C^\circ)$ is metrizable, and hence each horofunction $h$ in $\mathcal{H}_\rho$ is the limit of a sequence 
$(i_\rho(y_n))_n$ where $(y_n)_n$ is in $C^\circ$. However, if $C$ is infinite dimensional $(C^\circ, d_C)$ is no longer $\sigma$-compact, and the topology of compact convergence is not metrizable. Therefore we shall  work with nets instead of sequences. So, for each $h\in\mathcal{H}_\rho$ there exists a net $(i_\rho(y_\alpha))_\alpha$ such that 
$i_\rho(y_\alpha)\to h$, where $y_\alpha\in C^\circ$ for all $\alpha$. Moreover, every net 
$(i_\rho(y_\alpha))_\alpha$ in $\overline{i_\rho(C^\circ)}$ has a convergent subnet, as 
$\overline{i_\rho(C^\circ)}$ is compact. 

The next lemma is an infinite dimensional version of \cite[Lemma 2.4]{Wa}. 
\begin{lemma}\label{lem:5.3} Let $C$ be a closed cone with nonempty interior in a Banach space $X$ and let $(i_R(y_\alpha))_\alpha$ be a net converging to $h\in \mathcal{H}_R$. If $(y_\alpha)_\alpha$ has a subnet converging to $y\in C\setminus\{0\}$ in the norm topology, then $y\in\partial C$ and 
\begin{equation}\label{eq:hr}
h(x)=\RF_C(x,y)-\RF_C(b,y)\mbox{\quad for all }x\in C^\circ.
\end{equation}
\end{lemma}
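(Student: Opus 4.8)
The plan is to identify the limit horofunction $h$ explicitly by transporting the convergence of the net through the continuity of $M$ established in Lemma \ref{lem:2.2}, and then to deduce that $y$ must lie on $\partial C$ precisely because $h$ is a genuine horofunction rather than a point of $i_R(C^\circ)$. Passing to the subnet furnished by the hypothesis, I may assume from the outset that $y_\alpha \to y$ in the norm topology, with $y \in C \setminus \{0\}$.

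First I would fix an arbitrary $x \in C^\circ$ and show that $i_R(y_\alpha)(x)$ converges to $\RF_C(x,y) - \RF_C(b,y)$. By definition $i_R(y_\alpha)(x) = \RF_C(x,y_\alpha) - \RF_C(b,y_\alpha) = \log M(y_\alpha/x) - \log M(y_\alpha/b)$. Since $x,b \in C^\circ$ and the first argument of $M$ is allowed to range over all of $X$, Lemma \ref{lem:2.2} tells us that $w \mapsto M(w/x)$ and $w \mapsto M(w/b)$ are norm-continuous on $X$; moreover $M(y/x)>0$ and $M(y/b)>0$, since $M(y/x)=0$ would force $-y\in C$ (using that $C$ is closed) and hence $y=0$ by $C\cap(-C)=\{0\}$, contrary to $y\neq 0$. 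Therefore $M(y_\alpha/x)\to M(y/x)$ and $M(y_\alpha/b)\to M(y/b)$, and taking logarithms gives $i_R(y_\alpha)(x)\to \RF_C(x,y)-\RF_C(b,y)$.

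On the other hand, the original net $(i_R(y_\alpha))_\alpha$ converges to $h$ in the topology of compact convergence, which agrees with pointwise convergence on $\overline{i_R(C^\circ)}$, so the chosen subnet also converges pointwise to $h$. Comparing the two limits at each $x\in C^\circ$ and invoking uniqueness of limits in $\mathbb{R}$ yields $h(x)=\RF_C(x,y)-\RF_C(b,y)$ for all $x\in C^\circ$, which is exactly (\ref{eq:hr}).

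It remains to prove $y\in\partial C$. Suppose instead $y\in C^\circ$. Then the formula just derived reads $h=i_R(y)$ with $y\in C^\circ$, so $h\in i_R(C^\circ)$, contradicting $h\in\mathcal{H}_R=\overline{i_R(C^\circ)}\setminus i_R(C^\circ)$. Hence $y\notin C^\circ$, and since $y\in C$ with $y\neq 0$ we conclude $y\in C\setminus C^\circ=\partial C$. The only genuinely delicate point is that the convergence $M(y_\alpha/x)\to M(y/x)$ must hold even though $y$ may sit on $\partial C$; this is precisely what Lemma \ref{lem:2.2} guarantees, because there the first argument of $M$ ranges over the whole space $X$ while only the second argument is confined to $C^\circ$. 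Everything else is net/subnet bookkeeping together with the coincidence of compact convergence and pointwise convergence on $\overline{i_R(C^\circ)}$.
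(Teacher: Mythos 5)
Your proof is correct and takes essentially the same route as the paper's: pass to the convergent subnet, use the continuity of $(x,y)\mapsto M(x/y)$ on $X\times C^\circ$ from Lemma \ref{lem:2.2} to identify the pointwise limit of $i_R$ along the subnet as $x\mapsto \RF_C(x,y)-\RF_C(b,y)$, match it with $h$ by uniqueness of limits, and conclude $y\in\partial C$ since otherwise $h\in i_R(C^\circ)$, contradicting $h\in\mathcal{H}_R$. Your explicit check that $M(y/x)>0$ (via closedness of $C$ and $C\cap(-C)=\{0\}$), so that the logarithms stay finite, is a detail the paper leaves implicit but is a welcome addition.
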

\begin{proof}
Let $(y_\beta)_\beta$ be a subnet of $(y_\alpha)_\alpha$ converging to $y\in C\setminus\{0\}$ in the norm topology. By Lemma \ref{lem:2.2} we know that $\RF_C(x,y_\beta)\to \RF_C(x,y)$ for all $x\in C^\circ$, and hence $i_R(y_\beta)$ converges to $x\mapsto \RF_C(x,y)-\RF_C(b,y)$, which proves (\ref{eq:hr}). Note also that, as $h\in \mathcal{H}_R$, the point $y\in\partial C$, as otherwise $h\in i_R(C^\circ)$.  
\end{proof}
In general, it appears to be difficult  to completely characterize $\mathcal{H}_F$. 
Instead, we observe that all Funk horofunctions have a kind of sub-gradient, which will prove useful later.  
\begin{lemma} \label{lem:5.6}
Let $C$ be a closed  cone with nonempty interior in a Banach space $X$. If $h\in\mathcal{H}_F$, then there exists $\varphi \in C^*\setminus \{0\}$ such that $\log \varphi(x) \le h(x)$ for all $x \in C^\circ$.  
\end{lemma}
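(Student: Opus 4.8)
The goal is to produce, for a given Funk horofunction $h \in \mathcal{H}_F$, a nonzero positive functional $\varphi \in C^*$ with $\log \varphi(x) \le h(x)$ for all $x \in C^\circ$. The natural starting point is the defining net: there is a net $(i_F(y_\alpha))_\alpha$ with $y_\alpha \in C^\circ$, $\|y_\alpha\| = 1$, converging to $h$ in the topology of compact convergence, where $i_F(y_\alpha)(x) = \F_C(x,y_\alpha) - \F_C(b,y_\alpha) = \log M(x/y_\alpha) - \log M(b/y_\alpha)$. The first thing I would do is extract, via Lemma \ref{lem:2.2}, for each $\alpha$ a functional that witnesses $M(b/y_\alpha)$: fix $u \in C^\circ$ and choose $\psi_\alpha \in \Sigma_u^*$ with $M(b/y_\alpha) = \psi_\alpha(b)/\psi_\alpha(y_\alpha)$. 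The plan is to pass to a weak* convergent subnet $\psi_\alpha \to \varphi \in \Sigma_u^*$ (possible since $\Sigma_u^*$ is weak* compact by Lemma \ref{lem:2.1}(3)), and to show this $\varphi$ is the desired subgradient.

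**The key inequality.**

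For any fixed $x \in C^\circ$ and any $\beta$ with $x \le_C \beta y_\alpha$, we have $\beta \psi_\alpha(y_\alpha) \ge \psi_\alpha(x)$, and since $M(x/y_\alpha)$ is the infimum of such $\beta$, this gives $M(x/y_\alpha) \ge \psi_\alpha(x)/\psi_\alpha(y_\alpha)$. Combining with the exact witness for $M(b/y_\alpha)$ yields
\[
i_F(y_\alpha)(x) = \log M(x/y_\alpha) - \log M(b/y_\alpha) \ge \log \frac{\psi_\alpha(x)/\psi_\alpha(y_\alpha)}{\psi_\alpha(b)/\psi_\alpha(y_\alpha)} = \log \psi_\alpha(x) - \log \psi_\alpha(b).
\]
Now I would take the limit along the convergent subnet. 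The left side converges to $h(x)$, and by weak* convergence $\psi_\alpha(x) \to \varphi(x)$ and $\psi_\alpha(b) \to \varphi(b) > 0$ (since $b \in C^\circ$ and $\varphi \in \Sigma_u^*$ forces $\varphi(b) > 0$). Absorbing the additive constant $\log \varphi(b)$ into a rescaling $\varphi \mapsto \varphi/\varphi(b)$ (which keeps $\varphi \in C^*$), I obtain $\log \varphi(x) \le h(x)$ for all $x \in C^\circ$, with $\varphi \in C^* \setminus \{0\}$ because $\varphi(b) > 0$.

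**The main obstacle.**

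The delicate point is the limit $\log \psi_\alpha(x) \to \log \varphi(x)$: this requires $\varphi(x) > 0$, i.e.\ that the limiting functional does not vanish at the interior point $x$. But in fact $\varphi \in \Sigma_u^*$ implies $\varphi(u) = 1$, and since $x \in C^\circ$ there is $\mu > 0$ with $\mu x \ge_C u$, hence $\mu \varphi(x) \ge \varphi(u) = 1 > 0$; so $\varphi(x) > 0$ for every $x \in C^\circ$ automatically, and the $\log$ is continuous at the limit. A second subtlety is that I am working with nets rather than sequences (since $C$ may be infinite dimensional and $\mathcal{C}(C^\circ)$ need not be metrizable), so I must be careful to pass simultaneously to a subnet on which both $i_F(y_\alpha) \to h$ and $\psi_\alpha \to \varphi$ hold; this is fine because a subnet of the convergent net $(i_F(y_\alpha))_\alpha$ still converges to $h$, and weak* compactness of $\Sigma_u^*$ provides a further convergent subnet of $(\psi_\alpha)$. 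I expect no other real difficulty, as the monotonicity of $M(x/\cdot)$ and the witnessing property of $\psi_\alpha$ do all the work.
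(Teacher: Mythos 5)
Your proof is correct and follows essentially the same route as the paper: extract, via Lemma \ref{lem:2.2}, a positive functional witnessing $M(b/y_\alpha)$ for each $\alpha$, use it to bound $i_F(y_\alpha)(x)$ from below by $\log\psi_\alpha(x)-\log\psi_\alpha(b)$, and pass to a weak* convergent subnet using the compactness in Lemma \ref{lem:2.1}(3). The only cosmetic difference is that the paper normalizes the witnessing functionals in $\Sigma_b^*$ (so the term $\log\varphi_\alpha(b)$ vanishes and no final rescaling is needed), whereas you normalize in $\Sigma_u^*$ and rescale at the end; your explicit verification that $\varphi(x)>0$ for $x\in C^\circ$ is a point the paper leaves implicit.
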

\begin{proof}
Let $(i(y_\alpha))_\alpha$ be a net converging to $h\in\mathcal{H}_F$. 
For each $\alpha$ there exists $\phi_\alpha\in \Sigma^*_b$ such that 
$\F_C(b,y_\alpha) = \log \frac{\varphi_\alpha(b)}{\varphi_\alpha(y_\alpha)}$, by Lemma \ref{lem:2.2}. So, for each $\alpha$ and each  $x\in C^\circ$ we have that 
\begin{eqnarray*}
\F_C(x,y_\alpha) -\F_C(b,y_\alpha) & \geq&  \log \frac{\varphi_\alpha(x)}{\varphi_\alpha(y_\alpha)} - \log \frac{\varphi_\alpha(b)}{\varphi_\alpha(y_\alpha)}\\
&  = & \log \varphi_\alpha(x) - \log \varphi_\alpha(b)\\
 & = & \log \varphi_\alpha(x).
\end{eqnarray*}
As $\Sigma_b^*$ is weak*  compact, there is a subnet on which $\varphi_\alpha$ converges to a point $\varphi\in \Sigma_b^*$ in the weak* topology. 
Thus, $h(x) \geq \log \varphi(x)$ for all $x\in C^\circ$. 
\end{proof}

We shall also need the following fact. 
\begin{proposition}\label{prop:5.6.2} 
Let $(y_\alpha)_\alpha$ be a net in $C^\circ$ such that  $y_\alpha\to y\in\partial C\setminus\{0\}$. 
Then $i_R(y_\alpha)\to h_R\in\mathcal{C}(C^\circ)$, where $h_R(x)=\RF_C(x,y)-\RF_C(b,y)$ for all $x\in C^\circ$ and $h_R\in\mathcal{H}_R$.  If $(y_\beta)_\beta$ is a subnet of $(y_\alpha)_\alpha$, then $i_F(y_\beta)$ converges in $\mathcal{C}(C^\circ)$ if and only if $i_H(y_\beta)$ converges in $\mathcal{C}(C^\circ)$. Moreover, if $i_F(y_\beta)$ converges to $ h_F\in \mathcal{C}(C^\circ)$ and  $i_H(y_\beta)$ converges to $h_H\in \mathcal{C}(C^\circ)$, then $h_F\in\mathcal{H}_F$ and $h_H\in\mathcal{H}_H$.   
\end{proposition}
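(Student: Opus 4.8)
The plan is to prove the reverse-Funk assertion first (it is the only one for which the boundary point $y$ produces an explicit limit) and then to transfer everything to the Funk and Hilbert cases through the pointwise identity $i_H=i_F+i_R$, which is immediate from $\delta_C=\F_C+\RF_C$. For the convergence $i_R(y_\alpha)\to h_R$, fix $x\in C^\circ$; since $y\neq 0$ and $x\in C^\circ$ we have $M(y/x)>0$, and by Lemma~\ref{lem:2.2} the map $(v,x)\mapsto M(v/x)$ is continuous on $X\times C^\circ$, so $\RF_C(x,y_\alpha)=\log M(y_\alpha/x)\to\log M(y/x)=\RF_C(x,y)$ and hence $i_R(y_\alpha)(x)\to h_R(x)$ pointwise. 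As $i_R(C^\circ)$ is equicontinuous by Lemma~\ref{lem:5.1} and $\overline{i_R(C^\circ)}$ is compact with the topology of compact convergence agreeing there with pointwise convergence, this upgrades to $i_R(y_\alpha)\to h_R$ in $\mathcal{C}(C^\circ)$, and also yields the stated formula.

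To see that $h_R\in\mathcal{H}_R$ it suffices to exclude $h_R=i_R(z)$ for some $z\in C^\circ$. Such an equality would give $M(y/x)/M(y/b)=M(z/x)/M(z/b)$ for all $x\in C^\circ$, i.e.\ $M(y/x)=c\,M(z/x)$ with $c=M(y/b)/M(z/b)>0$. Fix $u\in C^\circ$ and a supporting functional $\psi\in\Sigma_u^*$ with $\psi(y)=0$ (such $\psi$ exists because $y\in\partial C$, after normalising by $\psi(u)>0$), and test at $x_\epsilon:=y+\epsilon u\in C^\circ$. From $y\leq_C x_\epsilon$ we get $M(y/x_\epsilon)\leq 1$, while $M(z/x_\epsilon)\geq\psi(z)/\psi(x_\epsilon)=\psi(z)/\epsilon\to\infty$ as $\epsilon\to0^+$ since $\psi(z)>0$; thus $c\,M(z/x_\epsilon)\to\infty$ although $M(y/x_\epsilon)\leq 1$, a contradiction. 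For the equivalence, a subnet $(y_\beta)$ still satisfies $y_\beta\to y$, so $i_R(y_\beta)\to h_R$ by the above; since addition is continuous in $\mathcal{C}(C^\circ)$ and $i_R(y_\beta)$ has the fixed limit $h_R$, the identity $i_H=i_F+i_R$ shows that $i_F(y_\beta)$ converges iff $i_H(y_\beta)$ converges, in which case $h_H=h_F+h_R$.

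Next, $h_H\in\mathcal{H}_H$. Suppose $h_H=i_H(z)$ with $z\in C^\circ$. Then $h_H(x_\epsilon)=\delta_C(x_\epsilon,z)-\delta_C(b,z)$ with $\delta_C(x_\epsilon,z)=\log M(x_\epsilon/z)+\log M(z/x_\epsilon)$; as $\epsilon\to0^+$ we have $M(x_\epsilon/z)\to M(y/z)<\infty$ while $M(z/x_\epsilon)\to\infty$ (same $\psi$), so $h_H(x_\epsilon)\to+\infty$. On the other hand, from $y\leq_C x_\epsilon\leq_C (M(y/b)+\epsilon M(u/b))\,b$ and monotonicity of $M$ in its first argument (Lemma~\ref{lem:5.2}) one obtains $h_F(x_\epsilon)\leq\log\bigl(M(y/b)+\epsilon M(u/b)\bigr)$ and $h_R(x_\epsilon)=\log M(y/x_\epsilon)-\log M(y/b)\leq-\log M(y/b)$, whence
\[
h_H(x_\epsilon)=h_F(x_\epsilon)+h_R(x_\epsilon)\leq\log\!\left(1+\epsilon\,\frac{M(u/b)}{M(y/b)}\right)\xrightarrow[\epsilon\to0^+]{}0,
\]
which contradicts $h_H(x_\epsilon)\to+\infty$. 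Hence $h_H\in\mathcal{H}_H$.

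The delicate point, which I expect to be the main obstacle, is $h_F\in\mathcal{H}_F$. Here the same test point does not separate: if $h_F=i_F(z)$ then $h_F(x_\epsilon)=\log M(x_\epsilon/z)-\log M(b/z)\to\log M(y/z)-\log M(b/z)$ is \emph{finite}, since the Funk distance stays bounded when its first argument approaches the boundary. The plan is to extract from the construction in Lemma~\ref{lem:5.6} the optimal functionals $\varphi_\beta\in\Sigma_b^*$ with $M(b/y_\beta)=1/\varphi_\beta(y_\beta)$; because $M(b/y_\beta)\to\infty$ we get $\varphi_\beta(y_\beta)\to0$, and a weak* limit $\varphi$ satisfies $\varphi\in C^*\setminus\{0\}$, $\varphi(y)=\lim_\beta\varphi_\beta(y_\beta)=0$, and $h_F\geq\log\varphi$. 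Setting $L:=\lim_\beta M(y/y_\beta)/M(b/y_\beta)$ (which exists along a subnet and is finite, being bounded by $M(y/b)$), the inequalities $M(y/y_\beta)\leq M(x_\epsilon/y_\beta)\leq M(y/y_\beta)+\epsilon M(u/y_\beta)$ yield $e^{h_F(x_\epsilon)}\in[L,\,L+\epsilon\,e^{h_F(u)}]$, so $e^{h_F(x_\epsilon)}\to L$ as $\epsilon\to0^+$. Thus $h_F=i_F(z)$ would force $L=M(y/z)/M(b/z)>0$, and the crux is to prove instead that $L=0$; this is equivalent to the statement that a Funk horofunction arising from a net converging to $\partial C$ is unbounded below on the unit sphere, i.e.\ that $i_F$ is a homeomorphism onto its image near interior points, so that a net with $y_\beta\to y\in\partial C$ cannot have its Funk limit inside $i_F(C^\circ)$. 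Once $L=0$ is established, $h_F(x_\epsilon)\to-\infty$ contradicts the finite interior value and gives $h_F\in\mathcal{H}_F$.
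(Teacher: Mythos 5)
Your arguments for the reverse-Funk assertion, for the equivalence of convergence of $i_F(y_\beta)$ and $i_H(y_\beta)$, and for $h_H\in\mathcal{H}_H$ are all correct. (Your $h_H$ argument is in fact independent of the Funk assertion, which differs from the paper: there $h_H\in\mathcal{H}_H$ is deduced by combining $\limsup_k h_R(x_k)<\infty$ with the divergence $h_F(x_k)\to-\infty$ obtained in the Funk step; your direct upper bound $h_H(x_\epsilon)\leq\log\bigl(1+\epsilon M(u/b)/M(y/b)\bigr)$ is a legitimate shortcut.) The genuine gap is the assertion $h_F\in\mathcal{H}_F$, which you flag yourself but do not prove. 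You correctly reduce it to showing $L:=\lim_\beta M(y/y_\beta)/M(b/y_\beta)=0$, but then you only restate this claim as the assertion that ``a net with $y_\beta\to y\in\partial C$ cannot have its Funk limit inside $i_F(C^\circ)$,'' which is precisely the statement $h_F\notin i_F(C^\circ)$ you set out to prove: the plan is circular exactly where a quantitative estimate is needed. Note also that Lemma \ref{lem:5.6} cannot supply this estimate, since it only gives a \emph{lower} bound $h_F\geq\log\varphi$, whereas what is required is an \emph{upper} bound forcing $h_F(x_\epsilon)\to-\infty$.

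The missing ingredient, which is how the paper's proof closes the argument, is the subadditivity of $v\mapsto M(v/y_\beta)$ in its first argument combined with $M(b/y_\beta)\to\infty$. In your notation, with $x_\epsilon=y+\epsilon u$, compare with the nearby interior point $y_\beta+\epsilon u$:
\[
M(x_\epsilon/y_\beta)\leq M\bigl(x_\epsilon/(y_\beta+\epsilon u)\bigr)\,M\bigl((y_\beta+\epsilon u)/y_\beta\bigr)\leq M\bigl(x_\epsilon/(y_\beta+\epsilon u)\bigr)\bigl(1+\epsilon M(u/y_\beta)\bigr).
\]
Since $y_\beta+\epsilon u\to y+\epsilon u=x_\epsilon\in C^\circ$, Lemma \ref{lem:2.2} gives $M\bigl(x_\epsilon/(y_\beta+\epsilon u)\bigr)\to 1$; dividing by $M(b/y_\beta)\to\infty$ and passing to the limit in $\beta$ yields
\[
e^{h_F(x_\epsilon)}=\lim_\beta\frac{M(x_\epsilon/y_\beta)}{M(b/y_\beta)}\leq\epsilon\,e^{h_F(u)},
\]
whence $L\leq\epsilon\,e^{h_F(u)}$ for every $\epsilon>0$, i.e.\ $L=0$ and $h_F(x_\epsilon)\to-\infty$. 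This contradicts the finite limit $\log M(y/z)-\log M(b/z)$ that $h_F=i_F(z)$ with $z\in C^\circ$ would force, and so completes your proof. This is exactly the estimate in the paper, which works with $x_k=\epsilon_k b+(1-\epsilon_k)y$ and the bound $M\bigl(\epsilon_k b+(1-\epsilon_k)y_\beta/y_\beta\bigr)\leq\epsilon_k M(b/y_\beta)+(1-\epsilon_k)$ to conclude $h_F(x_k)\leq\log\epsilon_k$.
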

\begin{proof}
It follows from Lemma \ref{lem:2.2} that $i_R(y_\alpha)$ converges to  $ h_R\in\mathcal{C}(C^\circ)$, where $h_R(x)=\RF_C(x,y)-\RF_C(b,y)$ for all $x\in C^\circ$.  To show that $h_R\in\mathcal{H}_R$ we need to prove that there does not exist $v\in C^\circ$ such that 
\begin{equation}\label{eq:5.6.2.4}
h_R(x) =\RF_C(x,v)-\RF_C(b,v)
\end{equation}
for all $x\in C^\circ$. We argue by contradiction. So, suppose there exists $v\in C^\circ$ such that (\ref{eq:5.6.2.4}) holds. 
Let $(\epsilon_k)_k$ be a sequence of reals with $0<\epsilon_k<1$ and $\lim_{k\to\infty} \epsilon_k=0$.  Define 
$x_k:= \epsilon_kb+(1-\epsilon_k)y$ for all $k\geq 1$. Because $y\leq_C \left(\frac{1}{1-\epsilon_k}\right) x_k$, we see that $\log M(y/x_k)\leq -\log(1-\epsilon_k)$. As $y\neq 0$, we know that $-\infty<\log M(y/b)<\infty$, so that 
\begin{equation}\label{5.6.2.6}
\limsup_{k\to\infty} h_R(x_k)=\limsup_{k\to\infty} \RF_C(x_k,y)-\RF_C(b,y)<\infty.
\end{equation}
On the other hand, 
\[
\RF_C(x_k,v) = \log M(v/\epsilon_k b+(1-\epsilon_k)y)\to\infty
\]
as $k\to \infty$, because $v\in C^\circ$ and $\epsilon_k b+(1-\epsilon_k)y\to y\in\partial C$. Moreover, $\RF_C(b,v)$ is finite, as $b\in C^\circ$. So, if there exists $v\in C^\circ$ such that (\ref{eq:5.6.2.4}) holds, then
\begin{equation}\label{eq:5.6.2.5}
\lim_{k\to\infty}h_R(x_k)=\infty,
\end{equation}
which contradicts (\ref{5.6.2.6}). 

Now suppose that $(y_\beta)_\beta$ is a subnet of $(y_\alpha)_\alpha$. Then $i_R(y_\beta)$ still converges in $\mathcal{C}(C^\circ)$; and because $i_R(y_\beta) +i_F(y_\beta)=i_H(y_\beta)$, the convergence of $i_F(y_\beta)$ in $\mathcal{C}(C^\circ)$ is equivalent to the convergence of    $i_H(y_\beta)$ in $\mathcal{C}(C^\circ)$. 
Suppose that $i_F(y_\beta)$ converges to $ h_F\in \mathcal{C}(C^\circ)$ and  $i_H(y_\beta)$ converges to $h_H\in \mathcal{C}(C^\circ)$. It remains to show that $h_F\in\mathcal{H}_F$ and $h_H\in\mathcal{H}_H$.   

To prove that $h_F\in\mathcal{H}_F$, we need to show that there does not exist $v\in C^\circ$ such that 
\begin{equation}\label{eq:5.6.2.0}
h_F(x)=\F_C(x,v)-\F_C(b,v)
\end{equation}
for all $x\in C^\circ$.  Let $x_k$ be as above.  Note that for each $\beta$ we have that 
\begin{eqnarray*}
i_F(y_\beta )(x_k) &= &\F_C(x_k, y_\beta)-\F_C(b,y_\beta)\\
    &= & \log M(\epsilon_k b+(1-\epsilon_k)y/y_\beta) -\log M(b/y_\beta)\\
  &\leq & \log M(\epsilon_k b+(1-\epsilon_k)y/\epsilon_k b+(1-\epsilon_k)y_\beta)\\
  & & \qquad\qquad +\log M(\epsilon_k b+(1-\epsilon_k)y_\beta/y_\beta)  -\log M(b/y_\beta).
\end{eqnarray*}
We know that $y_\beta$ converges to $y$, so Lemma \ref{lem:2.2} implies that for each fixed $k\geq 1$, 
\[
M(\epsilon_k b+(1-\epsilon_k)y/\epsilon_k b+(1-\epsilon_k)y_\beta)\to 0.
\]
Also we have 
\[
 M(\epsilon_k b+(1-\epsilon_k)y_\beta/y_\beta) \leq \epsilon_k M(b/y_\beta)+(1-\epsilon_k)
\]
and $M(b/y_\beta)\to\infty$ as $y_\beta\to y\in\partial C$. Thus, 
\begin{equation}\label{eq:5.6.2.1}
i_F(y_\beta)(x_k) \leq M(\epsilon_k b+(1-\epsilon_k)y/\epsilon_k b+(1-\epsilon_k)y_\beta) +\log\left(\frac{\epsilon_k M(b/y_\beta)+(1-\epsilon_k)}{M(b/y_\beta)}\right).
\end{equation}
The right hand side of (\ref{eq:5.6.2.1}) converges to $\log(\epsilon_k)$ as $y_\beta\to y$, and hence $h_F(x_k)\leq\log(\epsilon_k)$ for $k\geq 1$.  Thus, 
\begin{equation}\label{eq:5.6.2.2}
\lim_{k\to\infty} h_F(x_k)=-\infty.
\end{equation}

On the other hand, if there exists a $v\in C^\circ$ such that (\ref{eq:5.6.2.0}) holds, then it follows from Lemma \ref{lem:2.2} that  
\begin{equation}\label{eq:5.6.2.3} 
\lim_{k\to\infty} h_F(x_k)= \log M(y/v)-\log M(b/v)> -\infty, 
\end{equation}
which contradicts (\ref{eq:5.6.2.2}).

If there exists $v\in C^\circ$ such that $h_H = i_R(v)+i_F(v)$,  the estimates in  (\ref{eq:5.6.2.5})  and (\ref{eq:5.6.2.3}) show  that 
\[
\lim_{k\to\infty} i_R(x_k) =\infty\mbox{\quad and\quad }\lim_{k\to\infty} i_F(x_k)> -\infty,
\]
which implies that 
\begin{equation}\label{final}
\lim_{k\to\infty}h_H(x_k)=\infty. 
\end{equation}
On the other hand, $h_H(x) = h_R(x)+h_F(x)$ for all $x\in C^\circ$. Equations  
(\ref{5.6.2.6}) and (\ref{eq:5.6.2.2}) show that 
\[
\limsup_{k\to\infty}h_R(x_k) <\infty.\mbox{\quad and\quad }
\lim_{k\to\infty} h_F(x_k) = -\infty,
\]
so $\lim_{k\to\infty} h_H(x_k)=-\infty$, which contradicts (\ref{final}) and shows that $h_H\in\mathcal{H}_H$. 
\end{proof}

Note that if $\rho$ is $\F_C$, $\RF_C$ or $\delta_C$, and $y\in C^\circ$, then 
\[
i_\rho(y)(x) =\rho(x,y)-\rho(b,y) =\rho(x,y/\|y\|)-\rho(b,y/\|y\|)
\]
for all $x\in C^\circ$. Thus, any horofunction is the limit of a net $(i_\rho(y_\alpha))_\alpha$ where $\|y_\alpha\|=1$ for all $\alpha$.  If $C$ is a finite dimensional cone, any sequence $(y_n)_n$ with $\|y_n\|=1$ for all $n$, has a limit point $y\in C$ with $\|y\|=1$. In that case it follows from Lemma \ref{lem:5.3} and Proposition \ref{prop:5.6.2} that
\[
\mathcal{H}_R=\{x\mapsto \RF_C(x,y)-\RF_C(b,y)\colon y\in\partial C\mbox{ and } \|y\|=1\}, 
\]
cf.\ \cite[Proposition 2.5]{Wa}.

\subsection{The horofunction boundary of a  symmetric cone}
If $C^\circ$ is a symmetric cone, there exists a particularly simple description of $\mathcal{H}_F$. Recall that a {\em symmetric cone} is the interior of the cone of squares in a Euclidean Jordan algebra.  A detailed exposition of the theory of symmetric cones can be found in \cite{FK} by Faraut and Kor\'anyi. We shall follow their notation and terminology. A {\em Euclidean Jordan algebra}, $(X,\bullet)$ is a finite dimensional real inner product space 
$(X,\langle\cdot,\cdot\rangle)$ equipped with a bilinear product $x\bullet y$ such that for each $x,y\in X$: 
\begin{enumerate}[(1)]
\item $x\bullet y= y\bullet x$, 
\item $x\bullet (x^2\bullet y) =x^2\bullet (x\bullet y)$, 
\item the linear map $L(x)\colon X\to X$ given by $L(x)w:=x\bullet w$ satisfies 
\[
\langle L(x)w,z\rangle = \langle w,L(x)z\rangle\mbox{ for all }w,z\in X.
\]
\end{enumerate}
The collection of squares in $(X,\bullet)$ forms a cone, $C$, and its interior is called a symmetric cone. 
We denote the unit element in $(X,\bullet)$ by $e$, which is an element of $C^\circ$. It is a basic consequence of the spectral decomposition theorem \cite[Theorem III.1.2]{FK} that $\|x\|_e := \inf\{\lambda>0\colon -\lambda e\leq_C x\leq_C\lambda e\}=\max\{|\lambda|\colon\lambda\in\sigma(x)\}$. 
 For $x\in X$ the linear mapping $P(x)\colon X\to X$ given by $P(x):=2L(x)^2-L(x^2)$ is called the {\em quadratic representation of $x$}. Note that $P(x^{-1/2})x =e$ for all $x\in C^\circ$. The mapping $P(x)$ maps the symmetric cone $C$  onto itself if $x\in X$ is invertible; see \cite[Proposition III.2.2]{FK}, and hence it preserves $\F_C$ by Lemma \ref{lem:5.2}. So, for $x,y\in C^\circ$, we have that  
 \[
 M(x/y) =M(P(y^{-1/2})x/e) =\max\{\lambda\colon \lambda\in \sigma(P(y^{-1/2})x)\},
 \]
 where the second equality  follows from  the spectral decomposition theorem \cite[Theorem III.1.2]{FK}. 
 \begin{theorem}\label{thm:5.4} 
 If $C^\circ$ is a symmetric cone in a Euclidean Jordan algebra $(X,\bullet)$ and we take the unit $e\in C^\circ$ as the base point to construct the horoboundaries, then the following assertions hold:
 \begin{enumerate}[(i)]
 \item $\mathcal{H}_F$ consists of those $f\in \mathcal{C}(C^\circ)$ for which there exists $z\in\partial C$ with $\|z\|_e=1$ such that $f(x)=\RF_C(x^{-1},z)$ for all $x\in C^\circ$. 
 \item $\mathcal{H}_R$ consists of those $g\in \mathcal{C}(C^\circ)$ for which there exists $y\in\partial C$ with $\|y\|_e=1$ such that $g(x)=\RF_C(x,y)$ for all $x\in C^\circ$. 
 \item $\mathcal{H}_H$ consists of those $h\in \mathcal{C}(C^\circ)$ for which there exist $y,z\in\partial C$ with $\|y\|_e=\|z\|_e=1$ and $y\bullet z=0$ such that 
 \[h(x)=\RF_C(x^{-1},z) + \RF_C(x,y)\mbox{\quad  for all $x\in C^\circ$.}\] 
 \end{enumerate}
 \end{theorem}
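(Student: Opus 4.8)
The plan is to derive all three descriptions from a single duality between the Funk and reverse-Funk structures furnished by the Jordan inverse $\iota\colon x\mapsto x^{-1}$, and then to treat $\mathcal{H}_H$ by splitting a Hilbert horofunction into its Funk and reverse-Funk parts via Proposition \ref{prop:5.6.2}. The cornerstone is the identity
\[
\F_C(x^{-1},y^{-1}) = \RF_C(x,y)\qquad (x,y\in C^\circ).
\]
To prove it, recall that on a symmetric cone $x\mapsto x^{-1}$ is order-reversing: $0<_C a\leq_C b$ implies $b^{-1}\leq_C a^{-1}$. Hence $x^{-1}\leq_C\beta y^{-1}$ is equivalent to $y\leq_C\beta x$, and taking infima over $\beta$ gives $M(x^{-1}/y^{-1})=M(y/x)$, which is the claimed identity after taking logarithms. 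Since $e^{-1}=e$, it follows that
\[
i_F(y)(x)=\F_C(x,y)-\F_C(e,y)=\RF_C(x^{-1},y^{-1})-\RF_C(e,y^{-1})=i_R(y^{-1})(x^{-1}),
\]
so $\iota$ is a $d_C$-isometry fixing the base point $e$ that intertwines the Funk and reverse-Funk embeddings.

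For (ii) I would invoke the finite dimensional description of $\mathcal{H}_R$ recorded after Proposition \ref{prop:5.6.2}, namely $\mathcal{H}_R=\{x\mapsto\RF_C(x,y)-\RF_C(e,y):y\in\partial C,\ \|y\|=1\}$, and renormalise each representative by $\|y\|_e=1$ (permissible since $i_R(\lambda y)=i_R(y)$). Because $M(y/e)=\max\{\lambda:\lambda\in\sigma(y)\}=\|y\|_e$ for $y\in C$, this normalisation yields $\RF_C(e,y)=0$, giving exactly the functions $x\mapsto\RF_C(x,y)$ in (ii). For (i), let $J\colon\mathcal{C}(C^\circ)\to\mathcal{C}(C^\circ)$ be precomposition with $\iota$, i.e.\ $(J\psi)(x)=\psi(x^{-1})$. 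Since $\iota$ is a homeomorphism of $(C^\circ,d_C)$, $J$ is a homeomorphism for the topology of compact convergence, and by the displayed relation $J(i_R(w))=i_F(w^{-1})$; thus $J$ carries $i_R(C^\circ)$ bijectively onto $i_F(C^\circ)$, hence $\overline{i_R(C^\circ)}$ onto $\overline{i_F(C^\circ)}$ and $\mathcal{H}_R$ onto $\mathcal{H}_F$. Applying $J$ to the description in (ii) replaces $\RF_C(x,z)$ by $\RF_C(x^{-1},z)$, which is precisely (i).

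Statement (iii) is the substantial one. Given $h\in\mathcal{H}_H$, write $h=\lim_n i_H(y_n)$ with $\|y_n\|_e=1$; by compactness pass to a subsequence with $y_n\to y$, and $y\in\partial C$ (otherwise $h\in i_H(C^\circ)$). Since $i_H(y_n)=i_F(y_n)+i_R(y_n)$, Proposition \ref{prop:5.6.2} gives $i_R(y_n)\to h_R$ with $h_R(x)=\RF_C(x,y)$ and forces $i_F(y_n)\to h_F:=h-h_R\in\mathcal{H}_F$. Normalising $z_n:=y_n^{-1}/\|y_n^{-1}\|_e$ and passing to a further subsequence with $z_n\to z\in\partial C$, the relation $i_F(y_n)(x)=i_R(z_n)(x^{-1})$ together with (i)/(ii) yields $h_F(x)=\RF_C(x^{-1},z)$, so $h(x)=\RF_C(x^{-1},z)+\RF_C(x,y)$ with $\|y\|_e=\|z\|_e=1$. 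The orthogonality $y\bullet z=0$ then falls out cleanly: writing $m_n:=\min\{\lambda:\lambda\in\sigma(y_n)\}$ one has $\|y_n^{-1}\|_e=1/m_n$, so $z_n=m_n\,y_n^{-1}$ and $y_n\bullet z_n=m_n(y_n\bullet y_n^{-1})=m_n e\to 0$ as $y_n\to y\in\partial C$, whence $y\bullet z=0$ by continuity of the Jordan product.

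For the converse inclusion in (iii) --- which I expect to be the main obstacle --- I would start from $y,z\in\partial C$ with $\|y\|_e=\|z\|_e=1$ and $y\bullet z=0$. As $y,z$ lie in $C$ and are Jordan-orthogonal, they operator commute and admit a simultaneous spectral decomposition in a common Jordan frame $c_1,\dots,c_r$, with $y=\sum_i\lambda_i c_i$, $z=\sum_i\mu_i c_i$ and $\lambda_i\mu_i=0$ for every $i$. I would then build $y_n\in C^\circ$ on this frame so that $y_n\to y$ while the coefficient of $y_n^{-1}$ along $c_i$ is proportional to $\mu_i$ after normalisation --- for instance putting weights $\lambda_i$ on the support of $y$, weights $\tfrac1n\mu_i^{-1}$ on the support of $z$, and an intermediate scale $n^{-1/2}$ on the remaining idempotents, so that $y_n\to y$ and $y_n^{-1}/\|y_n^{-1}\|_e\to z$. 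Proposition \ref{prop:5.6.2} and parts (i),(ii) then identify $\lim_n i_H(y_n)$ as $x\mapsto\RF_C(x^{-1},z)+\RF_C(x,y)$ and certify that it lies in $\mathcal{H}_H$. The delicate points are choosing the three scales so that the normalised inverse converges to $z$ itself rather than to a reweighting of its spectral data, and verifying that the leftover idempotents are asymptotically negligible in both limits; this is where the bookkeeping must be carried out carefully.
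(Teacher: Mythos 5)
Your proposal is correct, and its skeleton is largely the paper's: the identity $\F_C(u,v)=\RF_C(u^{-1},v^{-1})$ coming from the order-reversing involution, the identification of reverse-Funk limits via Lemma \ref{lem:5.3} and Proposition \ref{prop:5.6.2}, the orthogonality computation $y_n\bullet\bigl(y_n^{-1}/\|y_n^{-1}\|_e\bigr)=e/\|y_n^{-1}\|_e\to 0$, and a Jordan-frame approximating sequence for the converse inclusion in (iii). Where you genuinely depart is part (i). The paper proves (i) by re-running limit computations: it takes $i_F(y_n)\to f$, extracts $y_n\to y$ and $y_n^{-1}/\|y_n^{-1}\|_e\to z$, shows $z\in\partial C$ via $\langle y,z\rangle=0$, and then uses the Jordan-frame sequence again for the converse inclusion of (i). You instead observe that precomposition with the involution is a self-inverse homeomorphism $J$ of $\mathcal{C}(C^\circ)$ (for compact convergence) satisfying $J(i_R(w))=i_F(w^{-1})$, hence $J(\overline{i_R(C^\circ)})=\overline{i_F(C^\circ)}$ and $J(\mathcal{H}_R)=\mathcal{H}_F$, so (i) is an immediate corollary of (ii). This is cleaner: it yields both inclusions of (i) at once, removes the separate boundary-membership argument for $z$ there, and confines the Jordan-frame bookkeeping to (iii); the paper's route, by contrast, is self-contained at the level of explicit sequences and reuses its computation verbatim in (iii). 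Two small points to tighten: in the forward direction of (iii) you assert $z_n\to z\in\partial C$ before you have earned it --- justify it either by applying Lemma \ref{lem:5.3} to $i_R(z_n)=J^{-1}(i_F(y_n))\to J^{-1}(h_F)\in\mathcal{H}_R$, or a posteriori from $y\bullet z=0$ with $y\neq 0$, which forces $\langle y,z\rangle=0$ while $\langle y,w\rangle>0$ for all $w\in C^\circ$ (this is exactly how the paper concludes $z\in\partial C$). Also, your scales $\lambda_i$, $1/(n\mu_i)$, $n^{-1/2}$ in the converse of (iii) do work: eventually $\|y_n^{-1}\|_e=n\mu_{\max}=n$, so the normalised inverse converges to $z$ itself, exactly as with the paper's choice $\lambda_i$, $1/(n^2\mu_i)$, $1/n$.
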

\begin{proof}
Let $g\in\mathcal{H}_R$ and let $(y_n)_n$ be a sequence in $C^\circ$, with $\|y_n\|_e=1$ for all $n$, such that $i_R(y_n)\to g$. By taking subsequences we may assume that $y_n\to y\in\partial C\setminus\{0\}$.  

It follows from Lemma \ref{lem:5.3} that 
\[
g(x)=\lim_{n\to\infty} i_R(y_n)(x) = \RF_C(x,y)-\RF_C(e,y)
\]
for all $x\in C^\circ$. But $\RF_C(e,y) = \log M(y/e) =\log \|y\|_e=0$, so that $g(x) = \RF_C(x,y)$ for all $x\in C^\circ$. On the other hand, if $y\in\partial C$ with $\|y\|_e=1$, then there exists a sequence $(y_n)_n$ in $C^\circ$ with $\|y_n\|_e=1$ for all $n$ such that $y_n\to y$. By taking subsequence we can also ensure that  $(i_R(y_n))_n$ converges to an element in $\mathcal{H}_R$. So,  by Lemma \ref{lem:5.3}
\[
\lim_{n\to\infty} i_R(y_n)(x) = \RF_C(x,y)-\RF_C(e,y)=\RF_C(x,y)
\]
for all $x\in C^\circ$, and hence $x\mapsto \RF_C(x,y)\in\mathcal{H}_R$. This completes the proof of part (ii). 

Let $f\in\mathcal{H}_F$ and let $(y_n)_n$ be a sequence in $C^\circ$, with $\|y_n\|_e=1$ for all $n$, such that $i_F(y_n)\to f$. By taking subsequences we may assume that $y_n\to y\in\partial C\setminus\{0\}$ and  $y_n^{-1}/\|y^{-1}_n\|_e\to z\in C$. 
Note that, as $y\in\partial C\setminus \{0\}$, it follows from the spectral decomposition theorem \cite[Theorem III.1.2]{FK} that $\|y^{-1}_n\|_e\to\infty$. This implies that 
\[
y\bullet z = \lim_{n\to\infty} y_n\bullet \left( \frac{y^{-1}_n}{\|y^{-1}_n\|_e}\right) =\lim_{n\to\infty} \frac{e}{\|y^{-1}_n\|_e}=0.
\]
It follows from  \cite[Exercise 3.3]{FK} that $\langle y,z\rangle =0$, and hence $z\in\partial C$, as $\langle v,w\rangle >0$ for all $w\in C^\circ$ and $v\in C\setminus\{0\}$. 

The inverse operation $w\mapsto w^{-1}$ on $C^\circ$ is known to be an order-reversing homogeneous of degree $-1$ involution; see \cite[Proposition 3.2]{Kai}.  This implies that $\F_C(u,v)=\RF_C(u^{-1},v^{-1})$ for all $u,v\in C^\circ$. Using Lemma \ref{lem:5.3} again we see that 
\begin{equation}\label{horobound}
\begin{split}
f(x) & =  \lim_{n\to\infty} \F_C(x,y_n)-\F_C(e,y_n)\\
    & =  \lim_{n\to\infty} \RF_C(x^{-1},y^{-1}_n/\|y^{-1}_n\|_e) - 
       \RF_C(e,y^{-1}_n/\|y^{-1}_n\|_e)\\
  & =  \RF_C(x^{-1},z)-\RF_C(e,z)\\
  & =  \RF_C(x^{-1},z)\\
\end{split}
\end{equation} 
for all $x\in C^\circ$, as $\RF_C(e,z)=\log \|z\|_e=0$.   

On the other hand, if $y,z\in\partial C$ with $\|y\|_e=\|z\|_e=1$ and $y\bullet z=0$, then there exists a Jordan frame $\{c_1,\ldots,c_k\}$ such that $y=\sum_{i=1}^p \lambda_ic_i$ and $z=\sum_{i=p+1}^q \mu_i c_i$ with $1=\lambda_1\geq \lambda_2\geq \ldots\geq \lambda_p>0$, $1=\mu_1\geq\mu_2\geq \ldots\geq \mu_q>0$, and $p<q\leq k$. 
For $n\geq 1$ define 
\begin{equation}\label{eq:y_n}
y_n:= \sum_{i=1}^p \lambda_ic_i + \sum_{i=p+1}^q \frac{1}{n^2\mu_i} c_i +\sum_{i=q+1}^k 
\frac{1}{n}c_i\in C^\circ .
\end{equation}
For sufficiently large $n$ we have that $\|y_n\|_e=1$ and 
\[
y_n^{-1}= \sum_{i=1}^p \frac{1}{\lambda_i}c_i + \sum_{i=p+1}^q n^2\mu_i c_i +\sum_{i=q+1}^k 
nc_i\in C^\circ .
\]
Note that $\|y_n^{-1}\|_e=n^2\mu_1=n^2$ for all large $n$, so that 
\[
\frac{y_n^{-1}}{\|y^{-1}_n\|_e}= \sum_{i=1}^p \frac{1}{n^2\lambda_i}c_i + \sum_{i=p+1}^q \mu_i c_i +\sum_{i=q+1}^k \frac{1}{n}c_i\in C^\circ 
\]
for all large $n$, which converges to $z$ as $n\to\infty$.  By taking a further subsequence we may assume that $i_F(y_n)$ converges to a point in $\mathcal{H}_F$. Using the same equations as in (\ref{horobound}) we see that $i_F(y_n)(x) \to  \RF_C(x^{-1},z)$ for all $x\in C^\circ$, which completes the proof of part (i). 

If $h\in \mathcal{H}_H$, then there exists a sequence $(y_n)_n$ in $C^\circ$ with $\|y_n\|_e=1$ for all $n$ such that $i_H(y_n)\to h$. By taking a subsequence we can assume that $y_n\to y\in\partial C$ and $y^{-1}_n/\|y^{-1}_n\|_e\to z\in C$. By the same argument as before we see that $y\bullet z =0$ and $z\in\partial C$. By taking a further subsequence we may also assume that $i_F(y_n)\to f\in\mathcal{H}_F$ and 
$i_R(y_n)\to g\in\mathcal{H}_R$, where $f(x) =\RF_C(x^{-1},z)$ and $g(x)=\RF_C(x,y)$ for all $x\in C^\circ$. This shows that  $h(x) = \RF_C(x^{-1},z)+\RF_C(x,y)$ for all $x\in C^\circ$.

To prove the other inclusion suppose that $y,z\in\partial C$ with $\|y\|_e=\|z\|_e=1$ and $y\bullet z=0$. Then we can define $y_n$ as in (\ref{eq:y_n}) for all $n\geq 1$. By taking a subsequence we can assume that $i_F(y_n)\to f\in\mathcal{H}_F$, $i_R(y_n)\to g\in\mathcal{H}_R$, and $i_H(y_n)\to h\in\mathcal{H}_H$. So, $h(x)=f(x)+g(x)$ for all $x\in C^\circ$. By the previous arguments $f(x) =\RF_C(x^{-1},z)$ and $g(x) = \RF_C(x,y)$ for all $x\in C^\circ$, which completes the proof. 
\end{proof}

\begin{remark}
If $C^\circ$  the symmetric cone of self-adjoint positive definite matrices over $\R$, $\C$ or $\H$, then for each $x,y\in C^\circ$ we have that 
\[ M(x/y) = \max \sigma(P(y^{-1/2})x) = \max \sigma(y^{-1/2}xy^{-1/2}) = \max \{\lambda\colon\lambda \in \sigma(y^{-1}x)\}. \]
So, in that case the horofunctions are given by 
\begin{enumerate}
 \item $h_F(x) = \log \max \sigma(xz)$,
 \item $h_R(x) = \log \max \sigma(x^{-1}y)$,
 \item $h_H(x) = \log \max \sigma(xz) + \log \max \sigma(x^{-1}y)$,
\end{enumerate}
where $y,z\in\partial C$ are such that $\|y\|_e=\|z\|_e=1$ and $y\bullet z=0$.

We also find an alternative way to describe the horofunctions of Hilbert's metric on the interior of the standard positive cone, 
$(\mathbb{R}^n_{+})^\circ=\{x\in\mathbb{R}^n\colon x_i>0\mbox{ for all }i\}$, then the one given in \cite{KMN}. Indeed, in that case Theorem \ref{thm:5.4} gives
\begin{enumerate}
 \item $h_F(x) = \log \max_i x_iz_i$,
 \item $h_R(x) = \log \max_i x^{-1}_iy_i$,
 \item $h_H(x) = \log \max_i x_iz_i + \log \max_i x^{-1}_iy_i$,
\end{enumerate}
where $y,z\in\partial \mathbb{R}^n_+$ are such that $\|y\|_\infty=\|z\|_\infty=1$ and $y_i z_i=0$ for all $i$. 
\end{remark}

\section{A Wolff type theorem for cones}
If $f\colon\Omega\to\Omega$ is fixed point free nonexpansive mapping on a finite dimensional Hilbert's metric space, then there exists a horofunction in $h\in \mathcal{H}_H$ such that  $h(f(x))\leq h(x)$ for all $x\in\Omega$; see \cite[Theorem 16]{GV} and  \cite[Theorem 3.4]{Ka}. The next theorem gives an analogous result for order-preserving homogenous mappings $f\colon C^\circ\to C^\circ$ that do not have an eigenvector in $C^\circ$, where the cone can be infinite dimensional.  
\begin{theorem} \label{thm:wolff}
Let $C$ be closed normal cone with nonempty interior in a Banach space $X$. If $f\colon C^\circ\to C^\circ$ is an order-preserving homogeneous mapping with no
eigenvector in $C^\circ$ and suppose that $f$ has converging approximate eigenvectors, then there exists a net $(v_\alpha)$ in $C^\circ$ with $v_\alpha\to v\in\partial C$ and $|v|=1$ such that  $i_F(v_\alpha)\to h_F\in\mathcal{H}_F $, $i_R(v_\alpha)\to h_R\in\mathcal{H}_R$ and  $i_H(v_\alpha)\to h_H\in\mathcal{H}_H$ with  $h_H(x) = h_F(x) + h_R(x)$ for all $x\in C^\circ$  such that 
\begin{enumerate}
\item $h_F(f(x)) \leq h_F(x) + \log r_{C^\circ}(f)$,
\item $h_R(f(x)) \leq h_R(x) - \log r_{C^\circ}(f)$,
\item $h_H(f(x)) \leq h_H(x)$.
\end{enumerate}   
Moreover, there exists $y \in \partial C\setminus\{0\}$ such that $h_R(x) = \RF_C(x,y)$ for all $x \in C^\circ$.  
\end{theorem}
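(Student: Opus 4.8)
The plan is to build the required net out of the approximate eigenvectors $v_{\epsilon,u}$ supplied by Theorem \ref{thm:4.1.2}. Fix $u\in C^\circ$ with $|u|=1$ and write $v_\epsilon:=v_{\epsilon,u}$, $r_\epsilon:=r_{\epsilon,u}$; since $|v_\epsilon|=1$ the defining relation reads $f(v_\epsilon)+\epsilon u=r_\epsilon v_\epsilon$. First I would use the hypothesis that $f$ has converging approximate eigenvectors to extract a sequence $\epsilon_k\to 0^+$ along which $v_{\epsilon_k}\to v$ in norm, with $|v|=1$. Now $f$ is nonexpansive for $d_C$ by Lemma \ref{lem:5.2}(4) and (\ref{eq:FvRF}), hence norm-continuous on $C^\circ$, and $r_{\epsilon_k}\to r_{C^\circ}(f)$ by Corollary \ref{cor:4.1.3}; passing to the limit in $f(v_{\epsilon_k})+\epsilon_k u=r_{\epsilon_k}v_{\epsilon_k}$ would force $f(v)=r_{C^\circ}(f)v$ were $v\in C^\circ$. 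Since $f$ has no eigenvector in $C^\circ$, this shows $v\in\partial C\setminus\{0\}$.

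Next I would produce the net. Because $\overline{i_F(C^\circ)}$ is compact (Ascoli), the net $(i_F(v_{\epsilon_k}))_k$ has a convergent subnet; let $(v_\alpha)$ be the corresponding subnet of $(v_{\epsilon_k})_k$, with parameters $\epsilon_\alpha\to 0$, so that $i_F(v_\alpha)\to h_F$. This subnet still converges in norm to $v\in\partial C\setminus\{0\}$, so Proposition \ref{prop:5.6.2} applies and yields $i_R(v_\alpha)\to h_R\in\mathcal{H}_R$ with $h_R(x)=\RF_C(x,v)-\RF_C(b,v)$, together with $h_F\in\mathcal{H}_F$ and $i_H(v_\alpha)\to h_H\in\mathcal{H}_H$; since $i_H=i_F+i_R$ one has $h_H=h_F+h_R$. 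The \emph{Moreover} claim then follows by rescaling: putting $y:=v/M(v/b)\in\partial C\setminus\{0\}$ and using $\RF_C(x,\lambda v)=\RF_C(x,v)+\log\lambda$ gives $\RF_C(x,y)=\RF_C(x,v)-\RF_C(b,v)=h_R(x)$.

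For the inequalities I would argue at finite $\epsilon$ and pass to the limit along $\alpha$. For (1), from $x\leq_C M(x/v_\epsilon)v_\epsilon$ and the order-preservation and homogeneity of $f$ I get $f(x)\leq_C M(x/v_\epsilon)f(v_\epsilon)\leq_C r_\epsilon M(x/v_\epsilon)v_\epsilon$, using $f(v_\epsilon)\leq_C r_\epsilon v_\epsilon$. Hence $\F_C(f(x),v_\epsilon)\leq\F_C(x,v_\epsilon)+\log r_\epsilon$, so $i_F(v_\alpha)(f(x))\leq i_F(v_\alpha)(x)+\log r_{\epsilon_\alpha}$, and letting $\alpha$ run (pointwise convergence $i_F(v_\alpha)\to h_F$ and $r_{\epsilon_\alpha}\to r_{C^\circ}(f)$) yields $h_F(f(x))\leq h_F(x)+\log r_{C^\circ}(f)$. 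Inequality (3) is then immediate from (1) and (2), since $h_H(f(x))=h_F(f(x))+h_R(f(x))\leq h_F(x)+h_R(x)=h_H(x)$.

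The hard part will be inequality (2): the $\RF_C$-nonexpansiveness of $f$ combined with $f(v_\epsilon)\leq_C r_\epsilon v_\epsilon$ only produces bounds in the wrong direction. Instead I would use the \emph{full} eigen-identity. From $v_\epsilon\leq_C M(v_\epsilon/x)x$ we get $f(v_\epsilon)\leq_C M(v_\epsilon/x)f(x)$, and with $u\leq_C M(u/f(x))f(x)$ this gives
\[
r_\epsilon v_\epsilon=f(v_\epsilon)+\epsilon u\leq_C\big(M(v_\epsilon/x)+\epsilon\,M(u/f(x))\big)f(x),
\]
whence $r_\epsilon M(v_\epsilon/f(x))\leq M(v_\epsilon/x)+\epsilon\,M(u/f(x))$. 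The crucial technical point is that here the boundary limit $v$ sits in the \emph{numerator} of the $M$-quotients, so Lemma \ref{lem:2.2} guarantees continuity in the first slot even though $v\notin C^\circ$; thus $M(v_\alpha/f(x))\to M(v/f(x))$ and $M(v_\alpha/x)\to M(v/x)$. Letting $\alpha$ run (with $\epsilon_\alpha\to 0$ and $r_{\epsilon_\alpha}\to r_{C^\circ}(f)$) gives $r_{C^\circ}(f)M(v/f(x))\leq M(v/x)$, i.e. $\RF_C(f(x),v)\leq\RF_C(x,v)-\log r_{C^\circ}(f)$; subtracting $\RF_C(b,v)$ yields (2). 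I expect this reliance on first-slot continuity of $M$ at a boundary point, rather than on $\RF_C$-nonexpansiveness, to be the genuinely delicate step.
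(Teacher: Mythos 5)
Your proposal is correct, and its skeleton coincides with the paper's: approximate eigenvectors $v_{\epsilon,u}$ from Theorem \ref{thm:4.1.2}, a norm-convergent subsequence with boundary limit $v$ (an interior limit would be an eigenvector, using Corollary \ref{cor:4.1.3}), a subnet chosen by compactness so that Proposition \ref{prop:5.6.2} delivers $h_F$, $h_R$, $h_H$ with $h_H=h_F+h_R$, inequality (3) as the sum of (1) and (2), and the final assertion by rescaling the representation of $h_R$ coming from Lemma \ref{lem:5.3}. The one genuine divergence is the mechanism for inequality (2). The paper stays at the level of the weak metrics: it picks $\beta>0$ with $|x|u\leq_C\beta f(x)$, so that $(1+\beta\epsilon_\alpha)^{-1}f_{\epsilon_\alpha}(x)\leq_C f(x)$, and then applies the $\RF_C$-nonexpansiveness of the perturbed map $f_{\epsilon_\alpha}$ together with the exact identity $f_{\epsilon_\alpha}(v_{\epsilon_\alpha})=r_{\epsilon_\alpha}v_{\epsilon_\alpha}$, finally passing to the pointwise limit of $i_R(v_{\epsilon_\alpha})$; neither the explicit boundary formula for $h_R$ nor the norm convergence of the net is needed at that step. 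You instead extract the scalar inequality $r_\epsilon M(v_\epsilon/f(x))\leq M(v_\epsilon/x)+\epsilon M(u/f(x))$ directly from the eigen-identity, pass to the limit via the norm convergence $v_\alpha\to v$ and the first-slot continuity of $M$ (Lemma \ref{lem:2.2}), and then feed the result into the representation $h_R(\cdot)=\RF_C(\cdot,v)-\RF_C(b,v)$. Note that the two comparison constants are the same object in disguise ($\beta$ versus $|x|\,M(u/f(x))$), so both arguments consume identical input; yours is more concrete (raw order inequalities plus continuity of $M$) but makes the explicit reverse-Funk representation carry the proof, whereas the paper's treatment of (2) runs parallel to its treatment of (1) and defers the explicit formula to the closing claim. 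Your proof of (1), via $f(v_\epsilon)\leq_C r_\epsilon v_\epsilon$ and direct order manipulation rather than via $f(x)\leq_C f_{\epsilon}(x)$ and $\F_C$-nonexpansiveness of $f_\epsilon$, is a cosmetic variant. Two small points you leave implicit but which hold: $r_{C^\circ}(f)>0$ (Lemma \ref{lem:4.1.5}) is needed so that $\log r_{C^\circ}(f)$ is finite, and $M(v/f(x))>0$, $M(v/x)>0$ (from $v\in C\setminus\{0\}$) so that the logarithms in your limit step are well defined.
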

\begin{proof}
Let  $u \in C^\circ$ be the base point to construct the horofunction boundaries. From Theorem \ref{thm:4.1.2} we know that for each   $\epsilon >0$ there exists $v_{\epsilon,u}\in C^\circ$  such that $|v_{\epsilon,u}|=1$ and
$f_{\epsilon,u}(v_{\epsilon,u})=r_{\epsilon,u} v_{\epsilon,u}$. For simplicity we shall write $v_\epsilon:=v_{\epsilon,u}$, $r_\epsilon:=r_{\epsilon,u}$, and $f_\epsilon:=f_{\epsilon,u}$.  It also follows from Theorem \ref{thm:4.1.2} that $r_\epsilon\to r_{C^\circ}(f)$ as $\epsilon \to 0$. 

As $f$ has converging approximate eigenvectors, we know that $\{v_\epsilon\colon 0<\epsilon< 1\}$ contains  convergent subsequence  $v_{\epsilon_n}$ with limit say $v\in C$. Note that $|v|=1$, and $v\in\partial C$, as otherwise $v$ is an eigenvector of $f$ in $C^\circ$.

From Proposition \ref{prop:5.6.2} we know that there exists a subnet $(v_{\epsilon_\alpha})$ such that $i_F(v_{\epsilon_\alpha})\to h_F\in\mathcal{H}_F$, $i_R(v_{\epsilon_\alpha})\to h_R\in\mathcal{H}_R$,  and $i_H(v_{\epsilon_\alpha})\to h_H\in\mathcal{H}_H$. By construction $h_H(x) =h_F(x)+h_R(x)$ for all $x\in C^\circ$.  
Thus, to prove the third inequality it suffices to show the first two. Using the third part of Lemma \ref{lem:5.2} we see that for each $\alpha$ and $x\in C^\circ$, 
\begin{eqnarray*} 
 \F_C(f(x),v_{\epsilon_\alpha}) - \F_C(u, v_{\epsilon_\alpha}) & \leq &  
 	\F_C (f_{\epsilon_\alpha}(x),v_{\epsilon_\alpha}) - \F_C(u, v_{\epsilon_\alpha})\\ 
	& = &  \F_C(f_{\epsilon_\alpha}(x),f_{\epsilon_\alpha}(v_{\epsilon_\alpha})) \\
	& & \qquad + 	          \log r_{\epsilon_\alpha}  - \F_C(u, v_{\epsilon_\alpha}) \\
	& \leq & \F_C(x,v_{\epsilon_\alpha}) + \log r_{\epsilon_\alpha} - \F_C(u, v_{\epsilon_\alpha}). 
	\end{eqnarray*} 
Thus, $h_F(f(x))\leq h_F(x)+\log r_{C^\circ}(f)$ for all $x\in C^\circ$. 	
	
To prove the second inequality fix $x\in C^\circ$, Note that as $f(x) \in C^\circ$, there exists a constant $\beta > 0$, depending on $x$, such that $|x| u \leq_C \beta f(x)$, and hence  
$(1+\beta\epsilon)^{-1} f_{\epsilon}(x) \leq_C f(x)$.  Using  Lemma \ref{lem:5.2}  we see that for each $\alpha$,  
\begin{eqnarray*}
 \RF_C(f(x),v_{\epsilon_\alpha}) - \RF_C(u, v_{\epsilon_\alpha}) & \leq &
	 \RF_C((1+\beta \epsilon_\alpha)^{-1}f_{\epsilon_\alpha}(x),v_{\epsilon_\alpha}) \\
	 & & - \RF_C(u, v_{\epsilon_\alpha})\\
	 &  = & \RF_C(f_{\epsilon_\alpha}(x),f_{\epsilon_\alpha} (v_{\epsilon_\alpha})) \\
	  & & - \RF_C(u, v_{\epsilon_\alpha}) + \log (1+\beta \epsilon_\alpha) \\
	  & & \qquad - \log r_{\epsilon_\alpha}\\ 	
	  & \leq & \RF_C(x,v_{\epsilon_\alpha}) - \RF_C(u, v_{\epsilon_\alpha}) \\
	  & & + \log (1+\beta \epsilon_\alpha) - \log r_{\epsilon_\alpha}.
	 \end{eqnarray*}
Thus, $h_R(f(x))\leq h_R(x)-\log r_{C^\circ}(f)$ for all $x\in C^\circ$. 

To prove the final assertion, note that by Lemma \ref{lem:5.3},  $h_R(x) = \RF_C(x,v)-\RF_C(u,v)$ for all $x \in C^\circ$.  Letting $y := M(v/u)^{-1}v$, we get that $h_R(x) = \RF_C(x,y)$.  
\end{proof}
The following example shows that equality can hold in the three inequalities in Theorem \ref{thm:wolff} simultaneously. 
\begin{example}\label{ex:par}   Consider the linear mapping 
\[ f(X) := MXM^*, \mbox{\quad where }M := \mat{1}{1}{0}{1}  \]
on the cone, $\Pi_2(\R)$, consisting  of positive semi-definite $2\times 2$ real matrices in the Jordan algebra of $2\times 2$ symmetric matrices. An elementary computation
shows for $k \geq 1$ that  
\begin{equation}\label{eq:f^k}
f^k(X) = \mat{a+2kb+k^2c}{b+kc}{b+kc}{c} \mbox{ \quad for }X = \mat{a}{b}{b}{c} \in \Pi_2(\R), \end{equation}
and hence $r_{\Pi_2(\R)^\circ}(f)=1$. 

Define the mapping $g$ on  $\Sigma^\circ$, consisting of invertible trace 1 matrices in $\Pi_2(\R)$, by $g(X) := f(X) / \mathrm{tr}(f(X))$. As $f$ is an invertible linear mapping from $\Pi_2(\R)$ onto itself, the mapping $g$ is an Hilbert metric isometry on $\Sigma^\circ$.
In fact, $g$ corresponds to a parabolic isometry of the hyperbolic plane. 
To see this, let 
\[ Y := \mat{1}{0}{0}{0}\mbox{\quad and\quad } Z := \mat{0}{0}{0}{1}, \]
By the above computation $g^k(X) \to Y$ for all $X \in \Sigma^\circ$, and hence $f$ has no eigenvector in $\Pi_2(\R)^\circ$. 
From Theorem \ref{thm:5.4} we know  there exist horofunctions $h_F(X)= \RF_{C}(X^{-1},Z)$ in $\mathcal{H}_F$, $h_R(X)= \RF_{C}(X,Y)$ in $\mathcal{H}_R$, and $h_H=h_F+h_R$ in $\mathcal{H}_H$, where we take the identity matrix $I$ as the base point. Note that 
\[
h_F(X) = \RF_{C}(X^{-1},Z) =  \log \max \sigma( XZ) =\log c  
\]
and 
\[
h_R(X) =\RF_{C}(X,Y)=  \log \max \sigma(X^{-1} Y) = \log(c/\det(X)).
\]
for all $X\in\Pi_2(\R)^\circ$. As $\det(f(X)) = \det(X)$, we deduce from (\ref{eq:f^k}) that 
\[h_F(f(X)) = \log c = h_F(X)\mbox{\quad  and\quad } h_R(f(X)) = \log(c/\det(X)) = h_R(X)\] for all $X\in \Pi_2(\R)^\circ$. 
Thus, for each $X\in \Pi_2(\R)^\circ$ we have that 
\[ h_H(f(X)) = \log c + \log (c/\det(X)) = h_H(X). \]
In Figure \ref{fig:hFandR} the level sets of $h_F$ and $h_R$ are depicted. 
\begin{figure}[h]
\begin{center}
\includegraphics[scale=0.5]{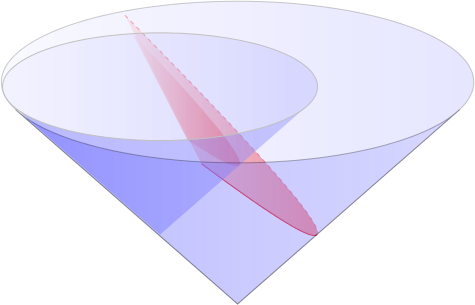} 
\end{center}
\caption[]{Funk and reverse-Funk horofunction level sets in $\Pi_2(\R)^\circ$.}
\label{fig:hFandR}
\end{figure}
\end{example}

The next corollary generalizes results from \cite{CFL} and \cite{GV} and is an immediate consequence of Lemma \ref{lem:5.6} and Theorem \ref{thm:wolff}.
\begin{corollary} \label{cor:weakupper}
If $C, X, f, y$ and $h_F$ are as in Theorem \ref{thm:wolff}, then the following assertions hold: 
\begin{enumerate} [(i)]
\item There exists $\varphi \in C^* \setminus \{ 0 \}$ such that $\log \varphi(f^k(x)) \leq h_F(x) + k \log r_{C^\circ}(f)$ for all $x \in C^\circ$ and $k \ge 1$.  
\item For all $x \in C^\circ$ such that $y \leq_C x$ we have that $r_{C^\circ}(f)y\leq_C f(x)$.  
\end{enumerate}
\end{corollary}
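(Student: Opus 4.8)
The plan is to obtain both parts directly from the two displayed inequalities in Theorem \ref{thm:wolff} together with the sub-gradient bound of Lemma \ref{lem:5.6}; no new construction is needed, so the work is entirely in unwinding notation.

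For part (i), I would first iterate the estimate $h_F(f(x)) \leq h_F(x) + \log r_{C^\circ}(f)$ from Theorem \ref{thm:wolff}(1). A routine induction on $k$, applying that inequality at the point $f^{k-1}(x) \in C^\circ$ at each stage, yields $h_F(f^k(x)) \leq h_F(x) + k\log r_{C^\circ}(f)$ for every $k \geq 1$. Next I would invoke Lemma \ref{lem:5.6} to produce $\varphi \in C^*\setminus\{0\}$ with $\log\varphi(z) \leq h_F(z)$ for all $z \in C^\circ$. Taking $z = f^k(x)$ and concatenating the two bounds gives $\log\varphi(f^k(x)) \leq h_F(f^k(x)) \leq h_F(x) + k\log r_{C^\circ}(f)$, which is precisely the assertion.

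For part (ii), the key is to rewrite the reverse-Funk inequality as a statement about the function $M$. Since $h_R(x) = \RF_C(x,y) = \log M(y/x)$ by (\ref{eq:RF}), the inequality $h_R(f(x)) \leq h_R(x) - \log r_{C^\circ}(f)$ of Theorem \ref{thm:wolff}(2) becomes $M(y/f(x)) \leq M(y/x)/r_{C^\circ}(f)$; note $r_{C^\circ}(f) > 0$ by Lemma \ref{lem:4.1.5}, so the logarithm is legitimate. If now $y \leq_C x$, then $M(y/x) \leq 1$ straight from the definition of $M$, whence $M(y/f(x)) \leq 1/r_{C^\circ}(f)$.

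The final step is to promote this scalar bound back to an order relation, and this is the only place where any care is required. Because $C$ is closed, the infimum in $M(y/f(x)) = \inf\{\beta : y \leq_C \beta f(x)\}$ is attained, so $y \leq_C M(y/f(x))\,f(x)$. Combining this with $M(y/f(x)) \leq 1/r_{C^\circ}(f)$ and $f(x) \in C$ gives $y \leq_C (1/r_{C^\circ}(f))\,f(x)$, and multiplying through by $r_{C^\circ}(f)$ yields $r_{C^\circ}(f)\,y \leq_C f(x)$, as desired. The attainment of the infimum, resting on closedness of the cone, is the one non-formal ingredient; everything else is bookkeeping.
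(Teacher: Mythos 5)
Your proof is correct and follows exactly the route the paper intends: the paper presents this corollary as an immediate consequence of Lemma \ref{lem:5.6} and Theorem \ref{thm:wolff}, and your argument simply fleshes out that derivation (iterating inequality (1) of Theorem \ref{thm:wolff} plus the sub-gradient functional for part (i); unwinding $h_R(x)=\RF_C(x,y)=\log M(y/x)$ together with attainment of the infimum defining $M$, via closedness of $C$, for part (ii)). No discrepancies.
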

Another consequence of Theorem \ref{thm:wolff} concerns the linear escape rate studied in \cite{GV}. 
Recall that for an order-preserving homogeneous mapping $f\colon C^\circ\to C^\circ$ the 
{\em linear escape rate} is defined by 
\[
\rho(f) := \lim_{k\to\infty}  \frac{\RF_C(x,f^k(x))}{k}.
\]
Note that 
\[
 \frac{\RF_C(x,f^k(x))}{k} = \log M(f^k(x)/x)^{1/k} =\log \|f^k(x)\|^{1/k}_x \to \log r_{C^\circ}(f), \]
 as $k\to\infty$, so that 
 \[
 \rho(f) = \log r_{C^\circ}(f).
 \] 
The following characterization of $\rho(f)$ extends \cite[Theorem 1]{GV}.
\begin{corollary}\label{cor:6.4} 
Let $C$ be closed normal cone with nonempty interior in a Banach space $X$. If $f\colon C^\circ\to C^\circ$ is an order-preserving homogeneous mapping and $f$ has converging approximate eigenvectors, then 
\begin{equation}\label{eq:escape}
\rho(f) = \max_{h\in \mathcal{A}_R}\inf_{x\in C^\circ} h(x)-h(f(x)), 
\end{equation}
where $\mathcal{A}_R$ consists of those $h\in\overline{i_R(C^\circ)}$ for which there exists a net $(y_\alpha)$ in $C^\circ$, with $y_\alpha\to y\in C$ and $\|y\|_b=1$, such that $i_R(y_\alpha)\to h$ in $\mathcal{C}(C^\circ)$.

If $f$ has no eigenvector in $C^\circ$, then the maximum is attained at some $h\in \mathcal{A}_R\cap \mathcal{H}_R$. 
\end{corollary}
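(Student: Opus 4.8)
The plan is to use the already-established identity $\rho(f)=\log r_{C^\circ}(f)$ together with the fact that the defining limit $\lim_{k\to\infty}\RF_C(x,f^k(x))/k$ exists and is independent of $x\in C^\circ$. Writing $J(h):=\inf_{x\in C^\circ}\big(h(x)-h(f(x))\big)$, I would prove the identity by establishing $J(h)\le\rho(f)$ for every $h\in\mathcal{A}_R$ and then exhibiting one $h\in\mathcal{A}_R$ with $J(h)\ge\rho(f)$, so that the supremum is attained and equals $\rho(f)$.

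For the upper bound, let $h\in\mathcal{A}_R$ with defining net $(y_\alpha)$, $y_\alpha\to y\in C$, $\|y\|_b=1$. Since $\|y\|_b=1$ forces $y\neq 0$, the continuity of $M$ from Lemma~\ref{lem:2.2} gives, exactly as in Lemma~\ref{lem:5.3} and Proposition~\ref{prop:5.6.2}, the representation $h(x)=\RF_C(x,y)-\RF_C(b,y)$ for all $x\in C^\circ$, with $\RF_C(b,y)$ finite because $b\in C^\circ$. Telescoping yields $h(x)-h(f^k(x))=\sum_{j=0}^{k-1}\big(h(f^j(x))-h(f^{j+1}(x))\big)\ge kJ(h)$, while the triangle inequality for $\RF_C$, in the form $\RF_C(x,y)\le\RF_C(x,f^k(x))+\RF_C(f^k(x),y)$, gives $h(x)-h(f^k(x))=\RF_C(x,y)-\RF_C(f^k(x),y)\le\RF_C(x,f^k(x))$. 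Hence $kJ(h)\le\RF_C(x,f^k(x))$; dividing by $k$ and letting $k\to\infty$ gives $J(h)\le\rho(f)$.

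It then remains to attain $\rho(f)$, which I would handle in two cases. If $f$ has no eigenvector in $C^\circ$, Theorem~\ref{thm:wolff} furnishes a net $(v_\alpha)$ in $C^\circ$ converging in norm to $v\in\partial C$ with $i_R(v_\alpha)\to h_R\in\mathcal{H}_R$ and $h_R(f(x))\le h_R(x)-\log r_{C^\circ}(f)$ for all $x$, so $J(h_R)\ge\rho(f)$. To see $h_R\in\mathcal{A}_R$ I rescale the net to $v_\alpha/\|v_\alpha\|_b$, which leaves $i_R$ unchanged and converges to $v/\|v\|_b$ of $\|\cdot\|_b$-norm one (note $0<\|v\|_b<\infty$ since $b\in C^\circ$ and $v\in C\setminus\{0\}$); this yields attainment in $\mathcal{A}_R\cap\mathcal{H}_R$ and proves the final assertion. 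If instead $f$ has an eigenvector $w\in C^\circ$, say $f(w)=\lambda w$, then $f^k(w)=\lambda^k w$ gives $\RF_C(w,f^k(w))=k\log\lambda$, so $\rho(f)=\log\lambda$ and $r_{C^\circ}(f)=\lambda$; I take $h:=i_R(w)\in\mathcal{A}_R$, realized by the constant net $w/\|w\|_b$. Writing $w=\lambda^{-1}f(w)$ and using $M(f(w)/f(x))\le M(w/x)$ (Lemma~\ref{lem:5.2}(4) applied to the pair $(w,x)$), one obtains $M(w/f(x))=\lambda^{-1}M(f(w)/f(x))\le\lambda^{-1}M(w/x)$, that is $\RF_C(f(x),w)\le\RF_C(x,w)-\log\lambda$, whence $J(i_R(w))\ge\log\lambda=\rho(f)$. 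Together with the upper bound this shows the supremum is a maximum equal to $\rho(f)$ in both cases.

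The routine part is the upper bound, which only combines a telescoping sum with the (asymmetric) triangle inequality for $\RF_C$. The step I expect to require the most care is verifying that the object supplied for attainment genuinely lies in $\mathcal{A}_R$ as defined—specifically matching the $\|\cdot\|_b=1$ normalization of the limit point, and, in the eigenvector case, confirming the computation $J(i_R(w))\ge\log\lambda$ through the $\RF_C$-nonexpansiveness of $f$. The split into the eigenvector/no-eigenvector cases is essential, since the escape-rate identity must hold in both situations while only the second produces an attaining element of $\mathcal{H}_R$.
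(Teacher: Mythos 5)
Your proof is correct, and its skeleton matches the paper's: represent every $h\in\mathcal{A}_R$ as $x\mapsto\RF_C(x,y)-\RF_C(b,y)$ (via Lemma \ref{lem:2.2} / Proposition \ref{prop:5.6.2}), deduce the upper bound from the triangle inequality for $\RF_C$, and obtain attainment from Theorem \ref{thm:wolff} in the no-eigenvector case and from the eigenvector itself otherwise. Two execution details differ, both defensibly. For the upper bound, the paper takes the infimum over $x$ in $h(x)-h(f(x))\le\RF_C(x,f(x))$ and identifies $\inf_{x\in C^\circ}\RF_C(x,f(x))=\log r_{C^\circ}(f)$ via the Collatz--Wielandt formula (Theorem \ref{thm:4.1.4}); your telescoping argument reaches the same bound using only the limit definition of $\rho(f)$, so it is slightly more self-contained. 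More significantly, in the eigenvector case the paper merely observes that $h(v)-h(f(v))=\log r_{C^\circ}(f)$ for every $h\in\overline{i_R(C^\circ)}$, which by itself only bounds each $\inf_{x}\bigl(h(x)-h(f(x))\bigr)$ from above and does not exhibit an $h$ whose infimum is at least $\log r_{C^\circ}(f)$; your computation that $h=i_R(w)$ satisfies $\RF_C(f(x),w)\le\RF_C(x,w)-\log\lambda$ for all $x$ (via Lemma \ref{lem:5.2}(4) and homogeneity) supplies exactly that missing attainment step, and your rescaling $v_\alpha\mapsto v_\alpha/\|v_\alpha\|_b$ to meet the $\|y\|_b=1$ normalization in the definition of $\mathcal{A}_R$ makes explicit a point the paper leaves implicit. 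So your version is, if anything, more complete than the paper's own argument.
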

\begin{proof} Let $b\in C^\circ$ denote the base point  for the horofunctions. We know from Proposition \ref{prop:5.6.2} that for each element 
$h\in \mathcal{A}_R$ there exists $y\in C$ with $\|y\|_b=1$ such that 
\[
h(x) = \RF_C(x,y)-\RF_C(b,y)=\RF_C(x,y)\mbox{\quad for all }x\in C^\circ.
\]
So, 
\[
h(x) -h(f(x)) = \RF_C(x,y) -\RF_C(f(x),y)\leq \RF_C(x,f(x))\]
for all $x\in C^\circ$, and hence 
\[
\sup_{h\in \mathcal{A}_R} \inf_{x\in C^\circ} h(x)-h(f(x))\leq \inf_{x\in C^\circ} \RF_C(x,f(x)) =  \log r_{C^\circ}(f)
\]
by Theorem \ref{thm:4.1.4}.  

If $f$ has an eigenvector $v\in C^\circ$, then $f(v) =r_{C^\circ}(f)v$ and $h(v) -h(f(v)) = \log r_{C^\circ}(f)$ for all $h\in \overline{i_R(C^\circ)}$. 
As $\rho(f)=\log r_{C^\circ}(f)$, we see that the identity holds if $f$ has an eigenvector in $C^\circ$. 

If $f$ has no eigenvector in $C^\circ$, then we know from Theorem \ref{thm:wolff} that there exists $h_R\in\mathcal{A}_R\cap\mathcal{H}_R$ such that $\log r_{C^\circ}(f) \leq h_R(x)-h_R(f(x))$ for all $x\in C^\circ$.  Thus, if $f$ has no eigenvector in $C^\circ$, then  
\[
\rho(f) = \max_{h\in \mathcal{A}_R}\inf_{x\in C^\circ} h(x)-h(f(x)),
\]
which completes the proof.
\end{proof}
Note that if in Corollary \ref{cor:6.4} the cone $C$ is finite dimensional, then $\mathcal{A}_R = \overline{i_R(C^\circ)}$. 

Having established (\ref{eq:escape}) we can now use identical arguments as the ones used by Gaubert and Vigeral in \cite[Lemma 36 and Corollary 37]{GV} to obtain a second Collatz-Wielandt formula for $r_{C^\circ}(f)$, which generalises the one given in \cite[Corollary 37]{GV}.  The details are left to the reader. 
To formulate it we need to recall the following concept. Given an order-preserving homogenous mapping $f\colon C^\circ\to C^\circ$ on the interior of a closed cone in a finite dimensional vector space $X$, the \emph{radial extension of $f$} is given by 
\[
\hat{f}(x):=\lim_{\epsilon\to 0^+} f(x+\epsilon u)\mbox{\quad for all }x\in C,
\] 
where $u\in C^\circ$ is fixed. (It is easy to verify that $\hat{f}$ is an order-preserving  homogeneous mapping, and the limit exists  and is independent of $u\in C^\circ$, as $f$ is order-preserving and $C$ is finite dimensional.)
\begin{theorem}[Collatz-Wielandt formula II] \label{thm:cwII}
Let $C$ be closed cone with nonempty interior in a finite dimensional vector space $X$. If $f\colon C^\circ\to C^\circ$ is an order-preserving homogeneous mapping, then 
\[
r_{C^\circ}(f) = \max_{y\in C\setminus\{0\}} m(\hat{f}(y)/y), 
\]
where $m(\hat{f}(y)/y) :=\sup\{\alpha\geq 0\colon \alpha y\leq_C \hat{f}(y)\}$ for $y\in C\setminus\{0\}$. 
\end{theorem}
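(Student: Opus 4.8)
The plan is to prove the two inequalities separately, writing $r := r_{C^\circ}(f)$ and recalling that $r>0$ by Lemma \ref{lem:4.1.5}. Throughout I exploit that the radial extension $\hat{f}$ is order-preserving and homogeneous on all of $C$, that $\hat{f}$ agrees with $f$ on $C^\circ$ (by continuity of $f$), and that in finite dimensions $C^\circ\neq\emptyset$ and every nonzero $y\in C$ admits a positive functional $\phi\in C^*$ with $\phi(y)>0$.

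First I would show $m(\hat{f}(y)/y)\leq r$ for every $y\in C\setminus\{0\}$. Writing $\alpha:=m(\hat{f}(y)/y)$, closedness of $C$ gives $\alpha y\leq_C\hat{f}(y)$, and iterating using order-preservation and homogeneity of $\hat{f}$ yields $\alpha^k y\leq_C\hat{f}^k(y)$ for all $k\geq 1$. Choosing $z\in C^\circ$ with $y\leq_C z$ (possible in finite dimensions) and using $\hat{f}|_{C^\circ}=f$, monotonicity gives $\hat{f}^k(y)\leq_C f^k(z)$, so $\alpha^k y\leq_C f^k(z)$. Applying $\phi$ with $\phi(y)>0$ and the bound $\phi(f^k(z))\leq\|\phi\|\,\|f^k\|_{C^\circ}\,\|z\|$, then taking $k$-th roots and letting $k\to\infty$, gives $\alpha\leq\lim_k\|f^k\|_{C^\circ}^{1/k}=r$. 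This proves $\sup_{y}m(\hat{f}(y)/y)\leq r$.

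For the reverse inequality I must produce a single $y^\ast\in C\setminus\{0\}$ with $r\,y^\ast\leq_C\hat{f}(y^\ast)$; this is the crux. If $f$ has an eigenvector $v\in C^\circ$, then its eigenvalue must equal $r$ (since $r=\lim_k\|f^k(v)\|_v^{1/k}$ by the argument in the proof of Theorem \ref{thm:4.1.4}), so $f(v)=rv$ and $y^\ast=v$ works because $\hat{f}(v)=f(v)=rv$. Otherwise $f$ has no eigenvector in $C^\circ$, and since finite-dimensionality guarantees converging approximate eigenvectors, Theorem \ref{thm:wolff} and Corollary \ref{cor:weakupper}(ii) apply: there is $y\in\partial C\setminus\{0\}$ such that $r\,y\leq_C f(x)$ for every $x\in C^\circ$ with $y\leq_C x$. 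Taking $x=y+\epsilon u$ with $u\in C^\circ$, so that $y\leq_C x\in C^\circ$, gives $r\,y\leq_C f(y+\epsilon u)$; letting $\epsilon\to 0^+$ and using closedness of $C$ together with $f(y+\epsilon u)\to\hat{f}(y)$ yields $r\,y\leq_C\hat{f}(y)$, i.e.\ $m(\hat{f}(y)/y)\geq r$. In either case the supremum is attained and equals $r$, which is the asserted formula.

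The main obstacle is the existence step, namely producing a $y^\ast$ attaining the lower Collatz--Wielandt value: in the fixed-point-free case it genuinely requires the Wolff-type horofunction result of Theorem \ref{thm:wolff} (through Corollary \ref{cor:weakupper}(ii)), and the delicate point is transferring the inequality, valid on $C^\circ$, to the boundary point $y$ by passing to the limit through the radial extension. An alternative, matching the route indicated by Gaubert and Vigeral, is to feed the escape-rate identity (\ref{eq:escape}) of Corollary \ref{cor:6.4} into the arguments of \cite[Lemma 36 and Corollary 37]{GV}; this bypasses the explicit case split but packages the same existence content inside the maximin characterisation of $\rho(f)=\log r_{C^\circ}(f)$.
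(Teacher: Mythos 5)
Your proof is correct, but it is worth noting how it sits relative to the paper: the paper does not actually write out a proof of Theorem \ref{thm:cwII} at all --- it simply asserts that, having established the escape-rate identity (\ref{eq:escape}) of Corollary \ref{cor:6.4}, one can repeat the arguments of Gaubert and Vigeral \cite[Lemma 36 and Corollary 37]{GV}, leaving the details to the reader. That route derives both inequalities from the maximin characterisation of $\rho(f)=\log r_{C^\circ}(f)$ over reverse-Funk horofunctions. You instead give a complete, self-contained argument: the inequality $m(\hat{f}(y)/y)\leq r_{C^\circ}(f)$ by iterating $\alpha^k y\leq_C \hat{f}^k(y)\leq_C f^k(z)$ and invoking the definition of the partial spectral radius, and attainment via the case split (eigenvector in $C^\circ$, where the eigenvalue is forced to equal $r_{C^\circ}(f)$ by Theorem \ref{thm:4.1.4}'s computation with $\|\cdot\|_v$; versus no eigenvector, where finite-dimensionality supplies converging approximate eigenvectors so that Theorem \ref{thm:wolff} and Corollary \ref{cor:weakupper}(ii) yield $r_{C^\circ}(f)\,y\leq_C f(y+\epsilon u)$, and the radial-extension limit transfers this to $r_{C^\circ}(f)\,y\leq_C\hat{f}(y)$). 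The hard existence content is the same in both treatments --- it is Theorem \ref{thm:wolff} either way, since Corollary \ref{cor:6.4} is itself deduced from it --- but your packaging bypasses the escape-rate/maximin formulation entirely, which makes the proof more elementary and checkable within the paper, at the cost of a case distinction that the GV route absorbs into the maximin statement. Two small points you glossed over but which are standard: every closed cone in a finite dimensional space is normal (needed to invoke Lemma \ref{lem:4.1.5}, Theorem \ref{thm:4.1.2} and Theorem \ref{thm:wolff}), and the supremum defining $m(\hat{f}(y)/y)$ is finite because $\alpha y\leq_C\hat{f}(y)$ for all $\alpha>0$ would force $-y\in C$, contradicting $C\cap(-C)=\{0\}$; neither affects the correctness of your argument, since the upper bound applies to every admissible $\alpha$ separately.
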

Theorem \ref{thm:cwII} should be compared with \cite[Corollary 5.4.2]{LNBook}, which implies that if $f\colon C\to C$ is a continuous, order-preserving, homogeneous mapping on a closed cone in a finite dimensional vector space $X$, then 
\[
r_C(f)=\max\{\alpha\geq 0\colon f(y)= \alpha y \mbox{ for some }y\in C\setminus\{0\}\}.
\]
The main difference is that in our case the mapping is only  defined on $C^\circ$, and need not have a continuous extension to the boundary; see \cite{BNS}. 

\section{Denjoy-Wolff theorems for Hilbert's metric} 
In this section we prove  Denjoy-Wolff type theorems for Hilbert's metric nonexpansive mappings on possibly infinite dimensional domains. We will consider  mappings $g\colon \Sigma^\circ\to\Sigma^\circ$ of the form:
\begin{equation}\label{eq:5.1}
g(x) =\frac{f(x)}{q(f(x))}\mbox{\quad for }x\in\Sigma^\circ:=\{x\in C^\circ\colon q(x)=1\},
\end{equation}
where $f\colon C^\circ\to C^\circ$ is an order-preserving homogeneous mapping on the interior of a normal closed $C$ in  a Banach space $X$ with $r_{C^\circ}(f)=1$ and 
$q\colon C^\circ\to (0,\infty)$ is a continuous homogenous mapping.  Typical examples of $q$ are  the norm (or an equivalent norm to the norm) of $X$ and strictly positive functionals $q\in C^\circ$.  Mappings $g$ of this form are nonexpansive under Hilbert's metric; see e.g. \cite[Section 2.1]{LNBook},
Note also that by Lemma \ref{lem:4.1.5} we can always renormalize $f$ so that $r_{C^\circ}(f)=1$ without changing $g$.  

\begin{theorem}\label{thm:7.1}
Let $C$ be normal closed cone with nonempty interior in a Banach space $X$  and  let $f\colon C^\circ\to C^\circ$ be a fixed point free order-preserving homogeneous mapping, with $r_{C^\circ}(f)=1$, satisfying the fixed point property on $C^\circ$ with respect to $d_C$.  Suppose that the mapping $g\colon\Sigma^\circ\to \Sigma^\circ$ is given by (\ref{eq:5.1}). If there exists $x_0\in C^\circ$ such that  $\mathcal{O}(x_0;f)$ and $\mathcal{O}(x_0/q(x_0);g)$  have compact closures in the norm topology,  then there exists a  convex set $\Omega\subseteq\partial C$ such that $\omega(z;g)\subseteq \Omega$ for all $z\in \Sigma^\circ$.
\end{theorem}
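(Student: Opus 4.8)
The plan is to exploit the fact that the \emph{unnormalised} map $f$ is itself nonexpansive under Thompson's metric, so that the machinery of Section \ref{sec3} applies to $f$ directly, and then to transport the resulting information about $\omega(x_0;f)$ to the normalised orbits of $g$. First I would note that since $f$ is order-preserving and homogeneous, Lemma \ref{lem:5.2}(4) gives $\F_C(f(x),f(y))\le\F_C(x,y)$ and, applied to the reversed arguments, $\F_C(f(y),f(x))\le\F_C(y,x)$; by (\ref{eq:FvRF}) this means $d_C(f(x),f(y))\le d_C(x,y)$, i.e.\ $f$ is Thompson's metric nonexpansive. As $f$ is fixed point free, has the fixed point property with respect to $d_C$, and $\mathcal{O}(x_0;f)$ has compact closure, Corollary \ref{cor:3.3} applies to $f$ and produces $\phi\in C^*\setminus\{0\}$ with $\omega(x_0;f)\subseteq\ker\phi\cap C$. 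I record two facts: a nonzero positive functional is strictly positive on $C^\circ$, so $\ker\phi\cap C^\circ=\emptyset$ and hence $\ker\phi\cap C\subseteq\partial C$; and by Lemma \ref{cor:orbitzero} (this is where $r_{C^\circ}(f)=1$ is used) the orbit $\mathcal{O}(x_0;f)$ does not accumulate at $0$, so every point of $\omega(x_0;f)$ is nonzero.

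Next I would record the identity $g^k(x)=f^k(x)/q(f^k(x))$ for all $k\ge 0$ and $x\in\Sigma^\circ$, which follows by induction from the degree-one homogeneity of $f$ and the homogeneity of $q$. Writing $z_0:=x_0/q(x_0)$ this gives $g^k(z_0)=f^k(x_0)/q(f^k(x_0))$, and I claim $\omega(z_0;g)\subseteq\ker\phi$. Fix $\xi\in\omega(z_0;g)$ with $g^{k_i}(z_0)\to\xi$; using compactness of the closure of $\mathcal{O}(x_0;f)$ pass to a further subsequence so that $f^{k_i}(x_0)\to\eta\in\omega(x_0;f)\subseteq\ker\phi$ and $q(f^{k_i}(x_0))\to c\in[0,\infty]$. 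The case $c=0$ is impossible: since $\eta\neq 0$ we would have $\|g^{k_i}(z_0)\|=\|f^{k_i}(x_0)\|/q(f^{k_i}(x_0))\to\infty$, contradicting convergence to $\xi$. Hence $c\in(0,\infty]$, so $q(f^{k_i}(x_0))$ is eventually bounded below by a positive constant (or tends to $\infty$), while $\phi(f^{k_i}(x_0))\to\phi(\eta)=0$; continuity of $\phi$ then yields $\phi(\xi)=\lim_i \phi(f^{k_i}(x_0))/q(f^{k_i}(x_0))=0$.

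I would then spread this to an arbitrary $z\in\Sigma^\circ$. Given $\zeta\in\omega(z;g)$ with $g^{k_i}(z)\to\zeta$, use compactness of the closure of $\mathcal{O}(z_0;g)$ to pass to a subsequence with $g^{k_i}(z_0)\to\xi\in\omega(z_0;g)$. Since $g$ is nonexpansive under $\delta_C$ we have $\delta_C(g^{k_i}(z),g^{k_i}(z_0))\le\delta_C(z,z_0)<\infty$ for all $i$, so the Hilbert-metric analogue of \cite[Lemma 5.2]{NTMNA} gives $\zeta\sim_C\xi$, i.e.\ $\alpha\xi\le_C\zeta\le_C\beta\xi$ for some $0<\alpha\le\beta$ (the degenerate possibility $\xi=0$ forces $\zeta=0$ and is handled trivially). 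Applying $\phi$ and using $\phi(\xi)=0$ forces $\phi(\zeta)=0$, so $\omega(z;g)\subseteq\ker\phi\cap C$ for every $z\in\Sigma^\circ$. Finally I set $\Omega:=\mathrm{co}\big(\bigcup_{z\in\Sigma^\circ}\omega(z;g)\big)$; this is convex and contained in the convex set $\ker\phi\cap C\subseteq\partial C$, and by construction $\omega(z;g)\subseteq\Omega$ for all $z\in\Sigma^\circ$, which is the assertion.

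The step I expect to be the main obstacle is the passage to the boundary: controlling the scalars $q(f^{k_i}(x_0))$ as the orbit approaches $\partial C$ (ruling out $c=0$, which is exactly where the normalisation $r_{C^\circ}(f)=1$ enters through Lemma \ref{cor:orbitzero}), together with the part-comparison $\zeta\sim_C\xi$ at limit points lying in $\partial C$, where the continuity of $M(\cdot/\cdot)$ from Lemma \ref{lem:2.2} is no longer available and one must instead argue via the bounded Hilbert distance of the approximating interior points as in the Thompson case of Theorem \ref{thm:3.1}.
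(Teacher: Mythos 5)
Your first half is essentially the paper's own argument: derive Thompson nonexpansiveness of $f$, invoke Corollary \ref{cor:3.3} to get $\phi\in C^*\setminus\{0\}$ with $\omega(x_0;f)\subseteq\ker\phi\cap C$, and push this to $\omega(z_0;g)\subseteq\ker\phi\cap C$ using Lemma \ref{cor:orbitzero}; your treatment of the normalizers via a subsequence with $q(f^{k_i}(x_0))\to c\in[0,\infty]$ is in fact more careful than the paper's, which writes $q(\xi)$ for a boundary point although $q$ is only defined on $C^\circ$. The genuine gap is in your second half, where you replace the paper's citation of \cite[Theorem 5.3]{NTMNA} by a part-comparison of limit points modeled on Theorem \ref{thm:3.1}. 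The ``Hilbert-metric analogue of \cite[Lemma 5.2]{NTMNA}'' is correct when both limits $\xi$ and $\zeta$ are nonzero (bounded $\delta_C$-distance then forces the $M$-function scalars to be bounded above and below, since otherwise closedness of $C$ and $C\cap(-C)=\{0\}$ would force one limit to vanish), but your parenthetical claim that ``$\xi=0$ forces $\zeta=0$'' is false for Hilbert's metric: $\delta_C$ is projective, so for any $w\in C^\circ$ the points $x_i=w$, $y_i=w/i$ satisfy $\delta_C(x_i,y_i)=0$ while $y_i\to 0$ and $x_i\to w\neq 0$. (The corresponding claim for Thompson's metric, which is what Theorem \ref{thm:3.1} relies on, is true because $d_C$ controls the scalings via normality; that is exactly what does not transfer.) Moreover, the case $\xi=0$ cannot be dismissed under the hypotheses of Theorem \ref{thm:7.1}: unlike Theorem \ref{thm:7.2}, no boundedness of $\Sigma^\circ$ is assumed, and for a general continuous homogeneous $q$ (e.g.\ $q(x_1,x_2)=x_1^2/x_2+x_2$ on $(\mathbb{R}^2_+)^\circ$) the set $\Sigma^\circ$ accumulates at $0$; since $q$ may blow up as the $f$-orbit approaches $\partial C$, nothing prevents $q(f^{k_i}(x_0))\to\infty$, i.e.\ $0\in\omega(z_0;g)$. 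In that situation your argument gives no information about $\phi(\zeta)$ for $\zeta\in\omega(z;g)\setminus\{0\}$.

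The gap is repairable, but the repair needs a different idea, namely applying Lemma \ref{cor:orbitzero} to $z$ itself rather than comparing with $\omega(z_0;g)$. First, compactness of $\overline{\mathcal{O}(x_0;f)}$ together with $\omega(x_0;f)\subseteq\ker\phi$ gives $\phi(f^k(x_0))\to 0$; Thompson nonexpansiveness of $f$ gives $f^k(z)\leq_C e^{d_C(z,x_0)}f^k(x_0)$, hence $\phi(f^k(z))\leq e^{d_C(z,x_0)}\phi(f^k(x_0))\to 0$ for every $z\in\Sigma^\circ$. Now let $\zeta\in\omega(z;g)$ with $g^{k_i}(z)\to\zeta$; if $\zeta=0$ then $\phi(\zeta)=0$ trivially, and if $\zeta\neq 0$, write $g^{k_i}(z)=f^{k_i}(z)/t_i$ with $t_i=q(f^{k_i}(z))$: were $\liminf_i t_i=0$, then $f^{k_i}(z)=t_i\,g^{k_i}(z)\to 0$ along a subsequence, contradicting Lemma \ref{cor:orbitzero}, so $t_i$ is bounded below and $\phi(\zeta)=\lim_i\phi(f^{k_i}(z))/t_i=0$. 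This yields $\omega(z;g)\subseteq\ker\phi\cap C$ for all $z\in\Sigma^\circ$, after which your final step (taking $\Omega$ to be the convex hull, contained in the convex set $\ker\phi\cap C\subseteq\partial C$) goes through verbatim; note that, unlike the paper's route through \cite[Theorem 5.3]{NTMNA}, this repaired argument never uses the compactness of $\overline{\mathcal{O}(z_0;g)}$.
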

\begin{proof} 
Let $\Omega_0$ denote the convex hull of $\omega(x_0;f)$. The mapping $f\colon C^\circ\to C^\circ$ is nonexpansive under Thompson's metric, as it is order-preserving and homogeneous; see e.g. \cite[Section 2.1]{LNBook}. So, we obtain from 
 Corollary \ref{cor:3.3} that $\Omega_0\subseteq \partial C$. Using the Hahn-Banach separation theorem we find $\phi \in X^*$ such that $\Omega_0\subseteq \mathrm{ker}(\phi)$ and $\phi(z)>0$ for all $z\in C^\circ$. Now let $y_0:=x_0/q(x_0)$ and $\eta \in\omega(y_0;g)$. Then there exists a subsequence $(g^{k_i}(y_0))_i$ which converges to $\eta$. As $\mathcal{O}(x_0;f)$ has compact closure in the norm topology, we may assume, after taking a further subsequence, that $f^{k_i}(x_0)$ converges to say, $\xi$. It follows from Lemma \ref{cor:orbitzero}  that $\xi\neq 0$, and hence $q(\xi)>0$. So, 
\[
\phi(\eta) =\lim_{i\to\infty}\phi \left(\frac{f^{k_i}(x_0)}{q(f^{k_i}(x_0))}\right )= \lim_{i\to\infty}\frac{\phi(f^{k_i}(x_0))}{q(f^{k_i}(x_0))} = \frac{\phi(\xi)}{q(\xi)}=0,
\]
which shows that $\omega(y_0;g)\subseteq \mathrm{ker}(\phi)\cap C$.
As  $\mathcal{O}(x_0/q(x_0);g)$  has a compact closure in the norm topology, we can apply \cite[Theorem 5.3]{NTMNA} to conclude that $\cup_{z\in \Sigma^\circ} \omega(z;g)$ is contained in $\partial C$, which completes the proof. 
\end{proof}
\begin{remark} It is interesting to note that the assumption that $f\colon C^\circ\to C^\circ$ is a continuous order-preserving mapping such that for each $x\in C^\circ$ the orbit $\mathcal{O}(x;f)$ has a compact closure in the norm topology and all accumulation points of $\mathcal{O}(x;f)$  lie inside $\partial C$, is sufficient to prove that  $\omega(x;f)$ is contained in a convex subset of $\partial C$ for each $x\in C^\circ$. The argument goes as follows. 

Let $x\in C^\circ$ and note that $\omega(x;f)$ is a closed subset of $X$. As $\omega(x;f)$ is contained in the closure of $\mathcal{O}(x;f)$, which is compact, $\omega(x;f)$ is compact. Hence there exists $y\in C^\circ$ with $z\leq_C y$ for all $z\in\mathrm{cl}(\mathcal{O}(x;f))$. Indeed,  there exists $R>0$ such that $\omega(x;f)\subseteq B_R(0):=\{u\in X\colon \|u\|\leq R\}$. Now let $y_0\in C^\circ$. Then there exists $\delta>0$ such that $B_\delta(y_0):=\{u\in X\colon \|y_0-u\|\leq \delta\}\subseteq C$. If we let $y=\frac{R}{\delta}y_0$, then for each $z\in X$, with $\|z\|\leq R$, we have that 
\[
y-z= \frac{R}{\delta}(y_0 -\frac{\delta}{R}z) =: \frac{R}{\delta}(y_0 -z_0)\in C,  
\] 
 as $z_0 = \frac{\delta}{R} z\in X$ with $\|z_0\|\leq \delta$. 
 
 By assumption $\omega(y;f)$ is a compact subset of $\partial C$ and nonempty. 
 Let $w\in\omega(y;f)$. As $w\in\partial C$, there exists $\phi\in C^*\setminus\{0\}$ such that $\phi(w) =0$. 
 
 We now show that $z\leq_C w$ for all $z\in \omega(x;f)$. 
 If $(m_i)_i$ is such that $f^{m_i}(y)\to w$, and $(k_j)_j$ is such that $f^{k_j}(x)\to z$, then 
 \[
  f^{k_j}(x)\leq_C f^{m_i}(y)\mbox{\quad for all } k_j\geq m_i,
 \]
 as $f^k(x)\leq_C y$ for all $k\geq 0$. 
 Taking the limit for $j\to\infty$, we get that 
 \[
 z\leq_C f^{m_i}(y)\mbox{\quad for all  }m_i.
 \]
 Now letting $i\to\infty$, we find that $z\leq_C w$. As $\phi(w)=0$, we conclude that 
 $\phi(z)=0$ and hence $\omega(x;f)\subseteq\mathrm{ker}(\phi)\cap \partial C$. 
\end{remark}

\begin{theorem}\label{thm:7.2} 
Let $C$ be normal closed cone with nonempty interior in a Banach space $X$  and  let $f\colon C^\circ\to C^\circ$ be a fixed point free order-preserving homogeneous mapping, with  $r_{C^\circ}(f)=1$,  having converging approximate eigenvectors. Let $g\colon\Sigma^\circ\to \Sigma^\circ$ be given by (\ref{eq:5.1}), where $\Sigma^\circ=\{x\in C^\circ\colon q(x)=1\}$ is bounded in the norm topology. If there exists $x_0\in C^\circ$ such that $\lim_{k\to\infty} \|f^k(x_0)\|=\infty$ and the orbit 
$\mathcal{O}(x_0/q(x_0);g)$ has compact closure in the norm topology of $X$,
then there exists a  convex set $\Omega\subseteq\partial C$ such that $\omega(z;g)\subseteq \Omega$ for all $z\in \Sigma^\circ$.
\end{theorem}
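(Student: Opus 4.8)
The plan is to mimic the proof of Theorem \ref{thm:7.1}, replacing the role of Corollary \ref{cor:3.3} (which is unavailable here, since $\mathcal{O}(x_0;f)$ is now unbounded) by the Wolff-type Theorem \ref{thm:wolff}. The latter, via Corollary \ref{cor:weakupper}, will supply a positive functional $\varphi\in C^*\setminus\{0\}$ controlling the Funk growth of the orbit, and the whole argument will reduce to showing that $\varphi\circ g^k\to 0$. Before invoking Theorem \ref{thm:wolff} I would first check that $f$ has no eigenvector in $C^\circ$. Indeed, if $v\in C^\circ$ satisfied $f(v)=\lambda v$, then $\lambda>0$ (as $f(v)\in C^\circ$), and Lemma \ref{lem:4.1.1} applied with $K=C$ and base point $v/\|v\|$ would give $r_{C^\circ}(f)=\lim_{k\to\infty}\|f^k(v)\|^{1/k}=\lambda$. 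Since $r_{C^\circ}(f)=1$ this forces $\lambda=1$, i.e.\ $f(v)=v$, contradicting that $f$ is fixed point free.

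Since $f$ additionally has converging approximate eigenvectors, Theorem \ref{thm:wolff} applies, and Corollary \ref{cor:weakupper}(i) together with $r_{C^\circ}(f)=1$ yield a Funk horofunction $h_F\in\mathcal{H}_F$ and a functional $\varphi\in C^*\setminus\{0\}$ with
\[
0\leq\varphi(f^k(x))\leq e^{h_F(x)}\qquad\text{for all }x\in C^\circ,\ k\geq 1.
\]
A standard interior argument then shows $\varphi>0$ on $C^\circ$: if $\eta\in C^\circ$ and $\varphi(w)>0$, then $\eta-\epsilon w\in C$ for small $\epsilon>0$, so $\varphi(\eta)\geq\epsilon\varphi(w)>0$; consequently $C\cap\mathrm{ker}(\varphi)\subseteq\partial C$.

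The heart of the proof is the estimate $\varphi(g^k(z))\to 0$ for every $z\in\Sigma^\circ$. By homogeneity $g^k(z)=f^k(z)/q(f^k(z))$, and since $\Sigma^\circ$ is norm bounded by some $R$, homogeneity of $q$ gives $q(x)\geq\|x\|/R$ for all $x\in C^\circ$; hence
\[
0\leq\varphi(g^k(z))=\frac{\varphi(f^k(z))}{q(f^k(z))}\leq\frac{R\,e^{h_F(z)}}{\|f^k(z)\|}.
\]
It then remains to see that $\|f^k(z)\|\to\infty$ for every $z\in C^\circ$. The mapping $f$ is $d_C$-nonexpansive (\cite[Section 2.1]{LNBook}), so with $D:=d_C(z,x_0)$ one has $f^k(x_0)\leq_C e^{D}f^k(z)$, and normality of $C$ gives $\|f^k(z)\|\geq\|f^k(x_0)\|/(\kappa e^{D})$, which tends to $\infty$ by hypothesis. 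Thus, for any $\zeta\in\omega(z;g)$ there is a subsequence $g^{k_i}(z)\to\zeta$ in norm, and continuity of $\varphi$ forces $\varphi(\zeta)=0$; hence $\omega(z;g)\subseteq C\cap\mathrm{ker}(\varphi)\subseteq\partial C$ for every $z\in\Sigma^\circ$. Taking $\Omega$ to be the convex hull of $\bigcup_{z\in\Sigma^\circ}\omega(z;g)$, convexity of $C$ and of the subspace $\mathrm{ker}(\varphi)$ give $\Omega\subseteq C\cap\mathrm{ker}(\varphi)\subseteq\partial C$, which is the assertion; the compactness hypothesis on $\mathcal{O}(x_0/q(x_0);g)$ guarantees that $\omega(x_0/q(x_0);g)$, and hence $\Omega$, is nonempty. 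I expect the main obstacle to lie in this last block: ensuring that the bound coming from the Funk horofunction genuinely persists along the normalized orbit, which requires propagating the divergence $\|f^k(x_0)\|\to\infty$ to all starting points (via Thompson nonexpansiveness and normality) and relating $q$ to the norm through the boundedness of $\Sigma^\circ$.
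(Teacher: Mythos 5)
Your proof is correct, and while the first half follows the paper, the second half takes a genuinely different route. The paper, like you, obtains $\psi\in C^*\setminus\{0\}$ and $h_F\in\mathcal{H}_F$ from Corollary \ref{cor:weakupper}(i) and uses the boundedness of $\Sigma^\circ$ to get the lower bound $q(\cdot)\geq\|\cdot\|/R$; but it only deduces $\psi(g^k(y_0))\to 0$ for the single point $y_0=x_0/q(x_0)$, and then invokes the compact closure of $\mathcal{O}(y_0;g)$ together with the external result \cite[Theorem 5.3]{NTMNA} to transfer the conclusion to all orbits and produce the convex set $\Omega$. You instead propagate the divergence $\|f^k(x_0)\|\to\infty$ to every $z\in C^\circ$ via Thompson nonexpansiveness ($f^k(x_0)\leq_C e^{d_C(z,x_0)}f^k(z)$) and normality of $C$, so that the single functional $\varphi$ satisfies $\varphi(g^k(z))\to 0$ for \emph{every} $z\in\Sigma^\circ$; the containment $\Omega:=\mathrm{co}\bigl(\bigcup_{z\in\Sigma^\circ}\omega(z;g)\bigr)\subseteq\mathrm{ker}(\varphi)\cap C\subseteq\partial C$ then follows at once, with no appeal to \cite[Theorem 5.3]{NTMNA}. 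Your route is self-contained and in fact shows that the compactness hypothesis on $\mathcal{O}(x_0/q(x_0);g)$ is not needed for the conclusion as stated (it only guarantees that the $\omega$-limit sets, and hence $\Omega$, are nonempty), whereas the paper's route is shorter modulo the cited theorem and reuses the same machinery as Theorem \ref{thm:7.1}. A further small merit: you verify explicitly, via Lemma \ref{lem:4.1.1}, that fixed point freeness together with $r_{C^\circ}(f)=1$ rules out eigenvectors of $f$ in $C^\circ$, which is a hypothesis of Theorem \ref{thm:wolff} (and hence of Corollary \ref{cor:weakupper}) that the paper's proof uses only tacitly.
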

\begin{proof} As $r_{C^\circ}(f)=1$ it follows from  Corollary \ref{cor:weakupper} that there exist $\psi\in C^*\setminus\{0\}$ and $h_F\in \mathcal{H}_F$ such that 
\begin{equation}\label{eq:upper}
\log \psi(f^k(x_0))\leq h_F(x_0)\mbox{\quad for all }k\geq 1.
\end{equation}
As $\Sigma^\circ$ is bounded in the norm topology, there exists $\delta>0$ such that $q(x)\geq \delta$ for all $x\in C^\circ$ with $\|x\|=1$. Indeed, if there exists a sequence $(u_k)_k$ in $C^\circ$ such that $\|u_k\|=1$ and $q(u_k)\leq 1/k$ for all $k$, then 
$u_k/q(u_k)\in \Sigma^\circ$, but $\|u_k/q(u_k)\|= 1/q(u_k)\to\infty$ as $k\to\infty$, which 
contradicts the fact that $\Sigma^\circ$ is bounded.
Combining this with the assumption, $\|f^k(x_0)\|\to\infty$ as $k\to\infty$, we find that 
\[
q(f^k(x_0)) =\|f^k(x_0)\|q\left(\frac{f^k(x_0)}{\|f^k(x_0)\|}\right)\geq \delta \|f^k(x_0)\|\to\infty\mbox{\quad as $k\to\infty$. }
\]
So, if we let $y_0:=x_0/q(x_0)$, then it follows from (\ref{eq:upper}) that 
\[
\psi(g^k(y_0)) = \frac{\psi(f^k(x_0))}{q(f^k(x_0))} \to 0\mbox{\quad as } k\to\infty.
\]
Thus, $\omega(y_0;g)\subseteq \mathrm{ker}(\psi)\cap C$. It now follows 
from \cite[Theorem 5.3]{NTMNA} that there exists $\Omega\subseteq 
\partial C$ convex such that $\omega(z;g)\subseteq \Omega$ for all $z\in \Sigma^\circ$. 
\end{proof}
Theorems \ref{thm:7.1} and \ref{thm:7.2}  confirm a conjecture by Karlsson and Nussbaum; see  \cite{Khb,NTMNA}, for an interesting class of Hilbert's metric nonexpansive mappings. The main point is that the arguments do not rely on the geometry of the domain.  They also imply Theorem \ref{thm:2}, as order-preserving homogeneous mappings $f\colon C^\circ\to C^\circ$ always satisfies the fixed point property on $C^\circ$ with respect to $d_C$ and each orbit of $g\colon \Sigma^\circ\to \Sigma^\circ$ has a compact closure in the norm topology, if the cone is finite dimensional. However, we do not know whether  there  exists an order-preserving homogenous mapping $f\colon C^\circ\to C^\circ$, where $C$ is a finite dimensional closed cone,  with a point $x\in C^\circ$ such that 
$\mathcal{O}(x;f)$ has an accumulation point in $\partial C$ and $\mathcal{O}(x;f)$ is unbounded in the norm topology. In fact, we conjecture that such a mapping cannot exist, but at present we can only prove it for polyhedral cones. 
\begin{theorem}\label{thm:7.4}
If $f\colon C^\circ\to C^\circ$ is an order-preserving homogeneous mapping on the interior of a polyhedral cone, then there does not exist a point $x\in C^\circ$ such that 
$\mathcal{O}(x;f)$ has an accumulation point in $\partial C$ and $\mathcal{O}(x;f)$ is unbounded in the norm topology.
\end{theorem}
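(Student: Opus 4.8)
The plan is to split on the value of the cone spectral radius $r:=r_{C^\circ}(f)$, which by Lemma~\ref{lem:4.1.5} is positive and, by Lemma~\ref{lem:4.1.1} together with the computation in the proof of Theorem~\ref{thm:4.1.4} (where $\|f^k\|_{y,C^\circ}=\|f^k(y)\|_y$), satisfies $\lim_{k\to\infty}\|f^k(x)\|^{1/k}=r$ for every $x\in C^\circ$, the $k$-th roots absorbing the equivalence constants between $\|\cdot\|$ and $\|\cdot\|_x$. If $r>1$ then $\|f^k(x)\|\to\infty$, so every subsequence of the orbit has norm tending to infinity and $\mathcal{O}(x;f)$ has no accumulation point in $X$ at all, in particular none in $\partial C$. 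If $r<1$ then $\|f^k(x)\|\to 0$, so the orbit is bounded. In both cases the conjunction in the statement fails, and we are reduced to the case $r=1$.

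So assume $r=1$. If $f$ has an eigenvector $v\in C^\circ$, then necessarily $f(v)=v$ (the eigenvalue equals $\lim_k\|f^k(v)\|^{1/k}=r=1$), and since order-preserving homogeneous maps are $d_C$-nonexpansive, $d_C(f^k(x),v)=d_C(f^k(x),f^k(v))\le d_C(x,v)=:R$ for all $k$; thus the orbit is trapped in the closed Thompson ball $\{z:d_C(z,v)\le R\}\subseteq[e^{-R}v,e^{R}v]$, a compact subset of $C^\circ$, so it is bounded and bounded away from $\partial C$ and the conjunction fails. Hence I may assume $f$ has no eigenvector in $C^\circ$; in finite dimensions the approximate eigenvectors automatically converge, so Theorem~\ref{thm:wolff} and Corollary~\ref{cor:weakupper} apply. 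Moreover, by Lemma~\ref{cor:orbitzero} the orbit cannot accumulate at $0$, so any accumulation point $\eta$ satisfies $\eta\in\partial C\setminus\{0\}$.

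Next I would exploit the polyhedral structure. Writing $C=\{z:\phi_i(z)\ge 0,\ 1\le i\le N\}$ with $\phi_1,\dots,\phi_N$ the ($u$-normalized) extreme generators of $C^*$, one has $\|z\|_u=\max_i\phi_i(z)$ for $z\in C$ and $M(x/y)=\max_i\phi_i(x)/\phi_i(y)$ for $y\in C^\circ$. Set $b_k^{(i)}:=\phi_i(f^k(x))$, so norm-unboundedness means $\max_i b_k^{(i)}$ is unbounded. From the reverse-Funk horofunction $h_R(\cdot)=\RF_C(\cdot,y)$ of Theorem~\ref{thm:wolff}, the inequality $h_R(f(\cdot))\le h_R(\cdot)$ (recall $r=1$) iterates to $M(y/f^k(x))\le M(y/x)$, giving the \emph{lower} bounds $b_k^{(i)}\ge c\,\phi_i(y)$ for a fixed $c>0$, i.e.\ every facet on which $y$ is strictly positive is bounded below. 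Dually, a Funk horofunction of a polyhedral cone is a pointwise maximum $\max_{i\in I}\log(\phi_i(\cdot)/w_i)$ (this follows from the polyhedral structure, cf.\ Walsh), and $h_F(f(\cdot))\le h_F(\cdot)$ yields \emph{upper} bounds $b_k^{(i)}\le w_i\,e^{h_F(x)}$ for $i\in I$; Corollary~\ref{cor:weakupper}(i) records the weaker single-functional version $\varphi(f^k(x))\le e^{h_F(x)}$. In addition, applying Całka's theorem to the $\delta_C$-nonexpansive normalization $g(z)=f(z)/\|f(z)\|_u$ on the cross-section $\{z\in C^\circ:\|z\|_u=1\}$, which is finitely totally bounded for $\delta_C$, shows that once the projective orbit is $\delta_C$-unbounded (forced by $f^{k_j}(x)\to\eta\in\partial C\setminus\{0\}$) it has no $\delta_C$-bounded subsequence, so $\min_i b_k^{(i)}/\max_i b_k^{(i)}\to 0$.

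The main obstacle is the final step: upgrading these finitely many one-sided facet bounds to boundedness of \emph{all} the $b_k^{(i)}$, which would contradict norm-unboundedness. I expect this to require a combinatorial induction on the facial structure of $C$. The accumulation point lies on a proper face $F=C\cap\bigcap_{i\in Z_\eta}\ker\phi_i$, where $Z_\eta=\{i:\phi_i(\eta)=0\}\neq\emptyset$, and $F$ has strictly smaller dimension; one wants to reduce the dynamics near $F$ to an order-preserving homogeneous map on the relative interior of $F$ and invoke an induction on $\dim X$. The delicate point, and the reason polyhedrality is used, is that $f$ need not extend continuously to $\partial C$ (cf.\ \cite{BNS}) and the orbit only approaches $F$ along a subsequence, so controlling the facets in $Z_\eta$ that are pinned by neither $h_F$ nor $h_R$, using the monotonicity of the consecutive distances $\F_C(f^{k+1}(x),f^k(x))$ together with the Collatz--Wielandt formula of Theorem~\ref{thm:cwII}, is where the real work lies.
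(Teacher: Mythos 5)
Your reductions are correct as far as they go: the dichotomy on $r_{C^\circ}(f)$, the elimination of the eigenvector case, the nonvanishing of accumulation points via Lemma \ref{cor:orbitzero}, and the individual facet bounds you extract from Theorem \ref{thm:wolff} and Corollary \ref{cor:weakupper} are all valid. But the proof is incomplete at exactly the point you flag yourself: the facets $i$ with $\phi_i(y)=0$ that also lie outside the index set $I$ of your Funk horofunction are controlled by neither the lower bounds from $h_R$ nor the upper bounds from $h_F$, and the ``combinatorial induction on the facial structure'' that is supposed to handle them is never carried out. This is not a routine verification that can be waved through: the horofunctions produced by Theorem \ref{thm:wolff} are anchored to a limit of the approximate eigenvectors $v_{\epsilon,u}$, which has no a priori relation to the direction along which the orbit escapes or to the face containing the accumulation point $\eta$, so there is no reason why this particular pair $(h_F,h_R)$ should pin down every facet coordinate $b_k^{(i)}$ along the orbit. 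As it stands, nothing in the proposal excludes an orbit that converges to $\eta$ along a subsequence while blowing up along a facet functional invisible to both horofunctions; this is a genuine gap, and it is where the entire content of the theorem lies.

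The paper closes the argument with a construction worth internalizing: it anchors the horofunction to the orbit itself rather than to the approximate eigenvectors. In Proposition \ref{prop:7.5} one chooses a subsequence $(f^{k_j}(x))_j$ along which the norm strictly exceeds all earlier values of $\|f^m(x)\|$, and passes to limits $i_R(f^{k_j}(x))\to h_R\in\mathcal{H}_R$ and $f^{k_j}(x)/\|f^{k_j}(x)\|\to z$; here $z\in\partial C$, since otherwise the normalized map would have a fixed point in $\Sigma^\circ$, forcing $f$ to have a fixed point and all orbits of $f$ to be bounded. Polyhedrality enters only through the finiteness of the set $E$ of extreme points of $\Sigma^*_{z_0}$: splitting $E$ into the functionals vanishing at $z$ and the rest yields a uniform $\gamma>0$ with which, using $\RF_C$-nonexpansiveness of $f$ (Lemma \ref{lem:5.2}(4)) and the decomposition of $\RF_C$ into a projective part plus logarithms of norm ratios, one obtains $h_R(f^{k_i+m}(x))\leq -2\log\gamma+\log\|x\|-\log\|f^{k_i}(x)\|$, hence $h_R(f^k(x))\to-\infty$ along the \emph{entire} orbit. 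The accumulation-point hypothesis is then used only to produce a bounded subsequence, i.e.\ $\beta>0$ with $f^{m_i}(x)\leq_C\beta x$ for all $i$, which gives $i_R(f^{k_j}(x))(f^{m_i}(x))\geq \RF_C(\beta x,f^{k_j}(x))-\RF_C(x,f^{k_j}(x))=-\log\beta$ for all $i,j$, so that $h_R\geq-\log\beta$ on that subsequence, contradicting $h_R(f^k(x))\to-\infty$. The idea missing from your proposal is precisely this: do not attempt to bound all the facet coordinates; instead build one reverse-Funk horofunction from a norm-record subsequence of the orbit and play it off against the trivial order bound $f^{m_i}(x)\leq_C\beta x$ supplied by the accumulation point.
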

Theorem \ref{thm:7.4} is a simple consequence of the following proposition. 
\begin{proposition}\label{prop:7.5} 
If $f\colon C^\circ\to C^\circ$ is an order-preserving homogenous mapping on the interior of a polyhedral cone $C$ in a finite dimensional vector space $V$ with $r_{C^\circ}(f)=1$, and $x\in C^\circ$ is such that $\mathcal{O}(x;f)$ is unbounded in the norm topology, then there exists $h_R\in\mathcal{H}_R$ such that 
\[
\lim_{k\to\infty} h_R(f^k(x)) =-\infty.
\]
\end{proposition}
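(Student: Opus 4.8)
The plan is to reduce the statement to a combinatorial question about the facet functionals of $C$ and then to produce a reverse-Funk horofunction adapted to the direction in which the orbit escapes. Write $C=\{z\in V\colon\psi_i(z)\ge 0,\ i=1,\dots,m\}$, where $\psi_1,\dots,\psi_m\in C^*\setminus\{0\}$ generate the extreme rays of $C^*$. For $z\in C^\circ$ and $w\in C$ we have $w\le_C\beta z$ if and only if $\psi_i(w)\le\beta\psi_i(z)$ for all $i$, so Lemma \ref{lem:2.2} gives the explicit formula $M(w/z)=\max_{1\le i\le m}\psi_i(w)/\psi_i(z)$, with all $\psi_i(z)>0$. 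Since $V$ is finite dimensional, every element of $\mathcal{H}_R$ has the form $h_R(\cdot)=\RF_C(\cdot,y)-\RF_C(b,y)$ for some $y\in\partial C\setminus\{0\}$ (Lemma \ref{lem:5.3} and Proposition \ref{prop:5.6.2}), and writing $\mathrm{supp}(y):=\{i\colon\psi_i(y)>0\}$ we obtain
\[
h_R(f^k(x))=\log\max_{i\in\mathrm{supp}(y)}\frac{\psi_i(y)}{\psi_i(f^k(x))}-\RF_C(b,y).
\]
Hence $h_R(f^k(x))\to-\infty$ precisely when $\psi_i(f^k(x))\to+\infty$ for every $i\in\mathrm{supp}(y)$. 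The entire task is therefore to find a nonzero $y\in\partial C$ whose active facets all diverge along the \emph{full} orbit.

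First I would secure the hypotheses needed to run the horofunction machinery, namely that $f$ has no eigenvector in $C^\circ$. If $f(v)=\lambda v$ with $v\in C^\circ$, then $\|f^k(v)\|^{1/k}=\lambda\|v\|^{1/k}\to\lambda$, so Lemma \ref{lem:4.1.1} (with $u=v$) forces $\lambda=r_{C^\circ}(f)=1$ and $v$ is a fixed point; but then $d_C(f^k(x),v)=d_C(f^k(x),f^k(v))\le d_C(x,v)<\infty$ for all $k$, and a closed Thompson ball about an interior point is norm compact in finite dimensions, contradicting the unboundedness of $\mathcal{O}(x;f)$. As $C$ is finite dimensional, $f$ automatically has converging approximate eigenvectors, so Theorem \ref{thm:wolff} applies: there is $h_R\in\mathcal{H}_R$, of the form above with some $y_0\in\partial C\setminus\{0\}$, satisfying $h_R(f(z))\le h_R(z)-\log r_{C^\circ}(f)=h_R(z)$ for all $z\in C^\circ$. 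Thus $k\mapsto h_R(f^k(x))$ is non-increasing and converges to some $L\in[-\infty,\infty)$, and by the reduction above it remains only to prove $L=-\infty$, i.e.\ that every facet in $\mathrm{supp}(y_0)$ tends to $+\infty$.

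To locate the escape direction I would also invoke Corollary \ref{cor:weakupper}(i), which (the no-eigenvector hypothesis being in force) supplies $\varphi\in C^*\setminus\{0\}$ with $\log\varphi(f^k(x))\le h_F(x)+k\log r_{C^\circ}(f)=h_F(x)$, so $\varphi(f^k(x))$ stays bounded. Choosing a subsequence with $\|f^{k_n}(x)\|\to\infty$ and $f^{k_n}(x)/\|f^{k_n}(x)\|\to w$ (possible by unboundedness and compactness of the unit sphere of $V$ intersected with $C$) gives $w\in C$, $\|w\|=1$, and $\varphi(w)=0$, whence $w\in\partial C$; moreover $\psi_i(f^{k_n}(x))\to+\infty$ for every $i\in\mathrm{supp}(w)$. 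This already shows that the facets active at $w$ diverge along a subsequence and singles out $\mathrm{supp}(w)$ as the natural candidate support.

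The crux, and the step I expect to be the main obstacle, is upgrading this subsequential divergence to a full limit, equivalently ruling out facets whose values oscillate (bounded along one subsequence, large along another) while the orbit as a whole remains unbounded. The monotonicity of $h_R(f^k(x))$ forces at every $k$ some facet of $\mathrm{supp}(y_0)$ to be bounded by a fixed constant, so finiteness of the facet set confines the recurrently small facets to a nonempty subset, while $\varphi$ confines a nonempty family of facets to bounded values for all $k$. My intended route is to use order-preservation and homogeneity to forbid genuine oscillation: propagating lower bounds forward (from $a z\le_C f^{k_n}(x)$ one gets $a\,f^p(z)\le_C f^{k_n+p}(x)$) should show that largeness in the direction $w$ persists, making $\psi_i(f^k(x))$ eventually monotone for $i\in\mathrm{supp}(w)$, just as the Perron direction dominates for nonnegative matrices; the delicate point is that such comparisons degrade near $\partial C$. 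Failing a direct argument, one reduces the dimension: the orbit is eventually trapped near the proper face $F=C\cap\bigcap_{i\notin\mathrm{supp}(w)}\ker\psi_i$ cut out by the non-diverging facets, and an induction on $\dim C$ applied to the induced dynamics there (via the radial extension $\hat f$ of Theorem \ref{thm:cwII}, after passing to a forward-invariant face) should exhibit a facet of $\mathrm{supp}(w)$ diverging along the full orbit. Once some active facet is known to diverge fully, taking $y:=w$ (or any boundary point, in the degenerate case where all facets diverge) yields $h_R\in\mathcal{H}_R$ with $h_R(f^k(x))\to-\infty$, as required.
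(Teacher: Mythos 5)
Your reduction to facet functionals is correct, and so are the two preliminary steps: $f$ has no eigenvector in $C^\circ$ (an eigenvector would force $\lambda=r_{C^\circ}(f)=1$, hence a fixed point and Thompson-bounded, therefore norm-bounded, orbits), and along a subsequence $f^{k_n}(x)/\|f^{k_n}(x)\|\to w\in\partial C$ every facet in $\mathrm{supp}(w)$ diverges. But there is a genuine gap exactly where you say you expect one: nothing in the proposal shows that the facets in the support of the Wolff base point $y_0$ diverge along the full orbit, nor even along some subsequence. Monotonicity of $k\mapsto h_R(f^k(x))$ only gives convergence to some $L\in[-\infty,\infty)$, and there is no a priori relation between $\mathrm{supp}(y_0)$ (which comes from the approximate eigenvectors of $f$, not from the orbit of $x$) and $\mathrm{supp}(w)$; your subsequential divergence lives on $\mathrm{supp}(w)$ and therefore says nothing about $L$. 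The danger is real: Example \ref{ex:par} exhibits, for a non-polyhedral cone, an unbounded orbit along which the Wolff horofunction $h_R$ is \emph{constant}, so any correct argument must invoke polyhedrality precisely at this point. Your two suggested mechanisms (propagating order bounds forward; induction on $\dim C$ through forward-invariant faces via $\hat f$) are sketches ending in ``should'' and are not carried out; this step is the heart of the proposition, not a routine verification.

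The paper closes this gap by choosing a different horofunction and a different mechanism, which is worth contrasting with your plan. It takes a subsequence $(x_{k_j})$ of record norms ($\|x_m\|<\|x_{k_j}\|$ for all $m<k_j$), passes to a further subsequence so that $i_R(x_{k_j})\to h_R\in\mathcal{H}_R$ and $z_{k_j}:=x_{k_j}/\|x_{k_j}\|\to z\in\partial C$, and splits the finite set $E$ of extreme points of $\Sigma^*_{z_0}$ into $E_0=\{\phi\in E\colon \phi(z)=0\}$ and $E_+=E\setminus E_0$. Hilbert-metric nonexpansiveness of the normalized map forces $\phi(z_{k_j-m})\to 0$ for $\phi\in E_0$ and fixed $m$, while the $E_+$ functionals stay bounded below by some $\gamma>0$ along the subsequence; together these give $\limsup_j \RF_C(z_{k_i},z_{k_j-m})\le -\log\gamma$ and $\RF_C(z_0,z_{k_j})\ge\log\gamma$. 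Then $\RF_C$-nonexpansiveness of $f$ (Lemma \ref{lem:5.2}(4)), in the form $\RF_C(x_{k_i+m},x_{k_j})\le\RF_C(x_{k_i},x_{k_j-m})$, combined with the record property $\|x_{k_j-m}\|\le \|x_{k_j}\|$ to control the homogeneity correction, yields
\[
h_R(x_{k_i+m})\le -2\log\gamma + \log\|x_0\|-\log\|x_{k_i}\| \mbox{\quad for all }m,i>0,
\]
a bound uniform in $m$ that tends to $-\infty$ with $i$ — precisely the ``subsequence to full orbit'' upgrade your proposal is missing. Basing the horofunction at the escape direction of a record-norm subsequence of the orbit itself, rather than at the Wolff base point, is the idea you would need to import to complete the argument.
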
 
\begin{proof}
For simplicity we write $x_k:=f^k(x)$ and $z_k:=x_k/\|x_k\|$ for $k\geq 0$. As $\mathcal{O}(x;f)$ is unbounded in the norm topology there exists a subsequence 
$(x_{k_j})_j$ of $(x_k)_k$ such that 
\[
\|x_m\|<\|x_{k_j}\|\mbox{\quad for all }m<k_j.
\]
Note that we can take a further subsequence such that the $i_R(x_{k_j})$ converges to say $h_R\in\mathcal{H}_R$ and $z_{k_j}\to z\in C\setminus\{0\}$ as $j\to\infty$. We claim that $z\in\partial C$. Indeed, suppose, for the sake of contradiction, that $z\in C^\circ$. The mapping 
$g\colon y\mapsto \frac{f(y)}{\|f(y)\|}$ on $\Sigma^\circ:=\{y\in C^\circ\colon \|y\|=1\}$ is nonexpansive on $(\Sigma^\circ, \delta_C)$. Moreover, 
\[
g^{k_j}(z_0) =\frac{f^{k_j}(x_0)}{\|f^{k_j}(x_0)\|}= z_{k_j}\to z\in C^\circ,
\]
as $j\to\infty$. Thus, $\omega(z_0;g)\cap \Sigma^\circ$ is nonempty. It now follows from \cite[Corollary 3.2.5]{LNBook} that $g$ has a fixed point, say $u\in\Sigma^\circ$. The equality $u = g(u) =\frac{f(u)}{\|f(u)\|}$ implies that $\|f(u)\|=r_{C^\circ}(f)=1$. Thus, $f$ has a fixed point in $C^\circ$. As $f$ is nonexpansive under $d_C$ on $C^\circ$, it follows that all orbits of $f$ are bounded under $d_C$, and hence also bounded in the norm topology, as the topologies coincide. This contradicts our assumption; so,  $z\in\partial C$.  

Let $E$ be the extreme points of $\Sigma^*_{z_0}:=\{\phi\in C^*\colon \phi(z_0)=1\}$.
Note that $E$ is a finite set, as $C$ is polyhedral. Let $E_0:=\{\phi\in E\colon \phi(z) =0\}$ and $E_+:=E\setminus E_0$, which are both nonempty sets. 

Observe that for $m\geq 0$ fixed and $\phi \in E_0$ we have that 
\[
\log \frac{\phi(z_{k_j-m})}{\phi(z_{k_j})} \leq d_H(g^{k_j}(z_0),g^{k_j-m}(z_0))\leq d_H(g^{m}(z_0),g(z_0))<\infty.
\]
Thus, for $\phi\in E_0$, we have that $\phi(z_{k_j-m})\to 0$ as $j\to\infty$. As $z_{k_j}\to z$ and $\phi(z_{k_j})>0$ for all $j$ and $\phi \in E_+$, we know that there exists a constant $\gamma>0$ such that $\phi(z_{k_j})>\gamma$ for all $j$ and $\phi\in E_+$. Combining these two observations  gives that 
\[
\limsup_{j\to\infty} \RF_C(z_{k_i},z_{k_j-m}) = \limsup_{j\to\infty} \log\left (\frac{\phi_j(z_{k_j-m})}{\phi_j(z_{k_i})}\right )\leq -\log\gamma
\]
for some $\phi_j\in E_+$, as $\phi_j(z_{k_j-m})\leq 1$ and $\phi(z_{k_i})>\gamma$. 
Likewise, we have for all $j$ sufficiently large that 
\[
\RF_C(z_0,z_{k_j}) =\log\left (\frac{\phi_j(z_{k_j})}{\phi_j(z_0)}\right ) \geq \log \gamma,
\]
where $\phi_j\in  E_+$.

Now fix integers $m,i>0$ and consider 
\[
h_R(x_{k_i+m}) =\lim_{j\to\infty} \RF_C(x_{k_i+m},x_{k_j}) -\RF_C(x_0,x_{k_j}).
\]

As $f$ is order-preserving and homogeneous, it is nonexpansive with respect to $\RF_C$; see Lemma \ref{lem:5.2}(4). Therefore 
\begin{eqnarray*}
h_R(x_{k_i+m}) & \leq & \limsup_{j\to\infty} \RF_C(x_{k_i},x_{k_j-m}) - \RF_C(x_0,x_{k_j})\\
   & \leq &  \limsup_{j\to\infty} \RF_C(z_{k_i},z_{k_j-m}) - \RF_C(z_0,z_{k_j}) + \log \left ( \frac{ \|x_{k_j-m}\|\|x_0\|}{\|x_{k_i}\|\|x_{k_j}\|}\right )\\
   & \leq &  \limsup_{j\to\infty} \RF_C(z_{k_i},z_{k_j-m}) - \RF_C(z_0,z_{k_j})+\log \left(\frac{\|x_0\|}{\|x_{k_i}\|}\right )\\
   & \leq & -2\log\gamma +\log\|x_0\|-\log\|x_{k_i}\|.
\end{eqnarray*}
As $\|x_{k_i}\|\to\infty$, we see that $\lim_{k\to\infty} h_R(x_{k})=-\infty$. 
\end{proof}
Note that Example \ref{ex:par} shows that Proposition \ref{prop:7.5} does not hold for general cones. 

Let us now prove Theorem \ref{thm:7.4}. 
\begin{proof}[Proof of Theorem \ref{thm:7.4}]
We argue by contradiction. So suppose that $(f^{m_i}(x))_i$ is a norm bounded subsequence and $\mathcal{O}(x,f)$ is unbounded in the norm topology. Then there exists $\beta>0$ such that $f^{m_i}(x)\leq \beta x$ for all $i$. Before we can apply Proposition \ref{prop:7.5} we need to show that $r_{C^\circ}(f)=1$. Note that $\mathcal{O}(x;f)$ has a convergent subsequence $(f^{s_j}(x))_j$ with limit say $\eta\in C$. From Lemma \ref{cor:orbitzero} we know that $\eta\neq 0$, so that $r_{C^\circ}(f)=\lim_{j\to\infty} \|f^{s_j}(x)\|^{1/s_j}= 1$. 

By Proposition \ref{prop:7.5} there  also exists a subsequence  $(f^{k_j}(x))_j$ with $\|f^{k_j}(x)\|\to\infty$ such that $i_R(f^{k_j}(x))\to h_R\in\mathcal{H}_R$ such that $h_R(f^m(x))\to-\infty$ as $m\to\infty$. Note, however, that 
\begin{eqnarray*}
i_R(f^{k_j}(x))(f^{m_i}(x)) & = & \RF_C(f^{m_i}(x),f^{k_j}(x)) -\RF_C(x,f^{k_j}(x))\\
 & \geq & \RF_C(\beta x, f^{k_j}(x))-\RF_C(x,f^{k_j}(x)) \\
 & = & -\log\beta
\end{eqnarray*}
for all $i$ and $j$, which is absurd. 
\end{proof}

\begin{remark}
There exists an alternative proof of Theorem \ref{thm:7.4} that does not rely on horofunctions. We sketch the argument for the interested reader below. 

As $C$ is a polyhedral cone the order-preserving homogeneous mapping has a continuous order-preserving homogeneous extension to the whole of $C$; see \cite{BNS}. Moreover, it follows from \cite[Theorem 5.3.1 and Proposition 5.3.6]{LNBook} that $1=r_{C^\circ}(f)=\hat{r}_C(f)$, where $\hat{r}_C(f)$ is the Bonsall spectral radius, which is given by 
\[
\hat{r}_C(f):=\|f^k\|_C^{1/k}.
\] 
Now suppose that $x\in C^\circ$ and $\mathcal{O}(x;f)$ is unbounded in the norm topology. Then there exists a subsequence $(k_i)_i$ such that 
$\lim_{i\to\infty}\|f^{k_i}(x)\|=\infty$ and $\|f^j(x)\|<\|f^{k_i}(x)\|$ for  all $j<k_i$ and $i\geq 1$.
Assume that  we have selected a subsequence of $(k_i)_i$, which we also label by $k_i$, such that 
\[
\lim_{i\to\infty}\frac{f^{k_i-\nu}(x)}{\|f^{k_i-\nu}(x)\|} =:\eta_\nu\in C
\]
for all $\nu=0,\ldots,m$. Leave the subsequence unchanged for $i\leq m$, and for $i\geq m+1$ modify the subsequence so that 
\[
\lim_{i\to\infty}\frac{f^{k_i-(m+1)}(x)}{\|f^{k_i-(m+1)}(x)\|} =\eta_{m+1}
\]
for some $\eta_{m+1}\in C$. Continuing in this way, we obtain a subsequence $(k_i)_i$ such that 
\[
\lim_{i\to\infty}\frac{f^{k_i-\nu}(x)}{\|f^{k_i-\nu}(x)\|} =:\eta_\nu\mbox{\quad for all } 
\nu\geq 0.   
\]

Now note that, as $f^{k_m}(x)$ and $f^{k_m-m}(x)$ in $C^\circ$, there exist $0< a_m\leq b_m$ such that 
\[
a_m f^{k_m-m}(x)\leq_C f^{k_m}(x)\leq_C b_m f^{k_m-m}(x),
\]
and hence $a_m f^{k_j-m}(x)\leq_C f^{k_j}(x)\leq_C b_m f^{k_j-m}(x)$ for all $j\geq m$. This gives 
\[
a_m \frac{f^{k_j-m}(x)}{\|f^{k_j}(x)\|}\leq_C \frac{f^{k_j}(x)}{\|f^{k_j}(x)\|}\leq_C b_m \frac{f^{k_j-m}(x)}{\|f^{k_j}(x)\|}.
\]
As 
\[
\frac{\|f^{k_j-m}(x)\|}{\|f^{k_j}(x)\|}\leq 1\mbox{\quad and\quad }\frac{\|f^{k_j-m}(x)\|}{\|f^{k_j}(x)\|}\geq \frac{1}{\|f^m\|_C},
\]
we have that 
\[
\frac{a_m}{\|f^m\|_C} \frac{f^{k_j-m}(x)}{\|f^{k_j-m}(x)\|}\leq_C \frac{f^{k_j}(x)}{\|f^{k_j}(x)\|}\leq_C b_m \frac{f^{k_j-m}(x)}{\|f^{k_j-m}(x)\|}.
\]
Letting $j\to\infty$ gives
\[
\frac{a_m}{\|f^m\|_C}\eta_m\leq_C\eta_0\leq_Cb_m\eta_m\mbox{\quad for all }m\geq 1.
\]
Thus, $\eta_m\sim_C \eta_0$ for all $m\geq 1$, and hence $\eta_m\sim_C\eta_n$ for all $m,n\geq 1$. In a similar way it can be shown that $f(\eta_1)\sim_C \eta_{0}$. As $\eta_1\sim_C\eta_0$, it follows that for each $x\sim_C \eta_0$ we have that $f(x)\sim_C f(\eta_0)\sim_C\eta_0$, and hence $f(C_0)\subseteq C_0$, where $C_0:=\{x\in C\colon x\sim_C\eta_0\}$ is the part of $\eta_0$. By continuity of $f\colon C\to C$, we find that $f(\overline{C}_0)\subseteq \overline{C}_0$. 

It is known that $C_0$ is the relative interior of the closed cone $\overline{C}_0$; see \cite[Lemma 1.2.2]{LNBook}. We claim that $\hat{r}_{\overline{C}_0}(f_{\mid \overline{C}_0})=1$. Obviously $\hat{r}_{\overline{C}_0}(f_{\mid \overline{C}_0})\leq 1$, as $\hat{r}_C(f)=1$. Note that for all $m\geq 1$, we have that $\|\eta_m\|=1$,  $\eta_m\in C_0$, and 
\[
\|f^m(\eta_m)\|= \lim_{i\to\infty}\frac{\|f^m(f^{k_i-m}(x))\|}{\|f^{k_i-m}(x)\|} = 
 \lim_{i\to\infty}\frac{\|f^{k_i}(x)\|}{\|f^{k_i-m}(x)\|}\geq 1,
\]
so $\hat{r}_{\overline{C}_0}(f_{\mid \overline{C}_0})=\sup\{\|f^m(x)\|^{1/k}\colon x\in \overline{C}_0\mbox{ with }\|x\|\leq 1\}\geq 1$. 

It follows from \cite[Corollary 5.4.2]{LNBook} that there exists $v\in \overline{C}_0$ such that $f(v)=v$ and $\|v\|=1$. As $\eta_0\in C_0$, there exists $\beta>0$ such that $v\leq_C \beta\eta_0$. As $f^{k_i}(x)/\|f^{k_i}(x)\|\to \eta_0$ and $C$ is polyhedral, we know; see \cite[Lemma 5.1.4]{LNBook}, that for each $0<\lambda<1$ there exists $i_0\geq 1$ such that $\lambda \eta_0\leq_C f^{k_i}(x)/\|f^{k_i}(x)\|$ for all $i\geq i_0$. So, if we fix $0<\lambda<1$, and let $b:=\beta^{-1}$,  we get 
\[
b\lambda\|f^{k_i}(x)\|v = b\lambda\|f^{k_i}(x)\| f^{m}(v)\leq_C f^{k_i+m}(x)\mbox{\quad for all }i\geq i_0. 
\]
It follows that $\liminf_{m\to\infty} \|f^{k_i+m}(x)\|\geq b\lambda\kappa^{-1}\|f^{k_i}(x)\|$, where 
$\kappa>0$ is the normality constant of $C$, so that $\liminf_{n\to\infty} \|f^{n}(x)\| =\infty$. Thus, $\mathcal{O}(x;f)$ cannot have any accumulation points in $C$.
\end{remark}

\footnotesize

\end{document}